\renewcommand\baselinestretch{1.1}
\pgfplotsset{compat=1.11}
\tikzset{
	mynode/.style={fill,circle,inner sep=1pt,outer sep=0pt}
}
\newcommand{\mbf}[1]{\mbox{\boldmath $#1$}}
\newcommand{\ba}{{\mbf \beta}}
\global\let\AddToReset=\@addtoreset}
\renewcommand{\theequation}{\thesection.\arabic{equation}}
\newtheorem{cor}{Corollary}[section]
\newtheorem{lem}{Lemma}[section]
\newtheorem{rem}{Remark}[section]
\newtheorem{thm}{Theorem}[section]
\newcommand{\cA}{{\cal A}}
\newcommand{\cB}{{\cal B}}
\newcommand{\cG}{{\cal G}}
\newcommand{\cK}{{\cal K}}
\newcommand{\cM}{{\cal M}}
\newcommand{\cN}{{\cal N}}
\newcommand{\cU}{{\cal U}}
\newcommand{\cW}{{\cal W}}
\def\ba{\begin{array}}
	\def\bc{\begin{center}}
		\def\bd{\begin{description}}
			\def\be{\begin{enumerate}}
				\def\ea{\end{array}}
			\def\ec{\end{center}}
		\def\ed{\end{description}}
	\def\edt{\end{document}}
\def\ee{\end{enumerate}}
\def\ben{\begin{equation}}
\def\benn{\begin{equation*}}
\def\een{\end{equation}}
\def\eenn{\end{equation*}}
\def\benr{\begin{eqnarray}}
\def\eenr{\end{eqnarray}}
\def\benrr{\begin{eqnarray*}}
\def\eenrr{\end{eqnarray*}}
\def\al{\alpha}
\def\edt{\end{document}}
\def\g{\gamma}
\def\G{\Gamma}
\def\h{\hat}
\def\ka{\kappa}
\def\iny{\infty}
\def\ka{\kappa}
\def\la{\lambda}
\def\noi{\noindent}
\def\nn{\nonumber}
\def\si{\sigma}
\def\Si{\Sigma}
\def\vep{\varepsilon}
\def\vs{\vskip}
\def\R{{\mathbb R}}
\DeclareMathOperator*{\argmin}{arg\,min}
\begin{document}

\bc
{\Large {\bf Inference on the change point with the jump size near the boundary of the region of detectability in high dimensional time series models}}\\[.2cm]
Abhishek Kaul$^{a,}$ Venkata K. Jandhyala$^a$, Stergios B. Fotopoulos$^b$


$^a$Department of Mathematics and Statistics,\\ $^b$Department of Finance and Management Science,\\ 
Washington State University, Pullman, WA 99164, USA.


\ec
\vs .1in
{\renewcommand{\baselinestretch}{1}
	\begin{abstract}
We develop a projected least squares estimator for the change point parameter in a high dimensional time series model with a potential change point. Importantly we work under the setup where the jump size may be near the boundary of the region of detectability. The proposed methodology yields an optimal rate of convergence despite high dimensionality of the assumed model and a potentially diminishing jump size. The limiting distribution of this estimate is derived, thereby allowing construction of a confidence interval for the location of the change point. A secondary near optimal estimate is proposed which is required for the implementation of the optimal projected least squares estimate. The prestep estimation procedure is designed to also agnostically detect the case where no change point exists, thereby removing the need to pretest for the existence of a change point for the implementation of the inference methodology. Our results are presented under a general positive definite spatial dependence setup, assuming no special structure on this dependence. The proposed methodology is designed to be highly scalable, and applicable to very large data. Theoretical results regarding detection and estimation consistency and the limiting distribution are numerically supported via monte carlo simulations.
\end{abstract} }
\noi {\it Keywords: High dimensions, time series, change point, inference, limiting distribution, region of detectability.}

\section{Introduction}\label{sec:intro}

In many applications of current scientific interest the assumption of stationarity of the mean of a time series over an extended sampling period could be unrealistic and may lead to flawed inference. Dynamic time series characterized via mean changes across unknown change points form a simplistic yet useful tool to model such non-stationarity of large streams of data. With large amounts of data now being commonplace in a variety of scientific fields such as econometrics, finance and genomics, significant attention in the statistical literature is being paid for the estimation of change points in a high dimensional setting, where the dimension of the time series being observed may be diverging much faster than the number of observations. In this article we consider the simplest of change point models, characterized as a linear process with a single potential mean shift, i.e.,
\benr\label{model:subgseries}
y_t=\begin{cases}\mu^0_1+\vep_t, & t=1,...,\lfloor T\tau^0\rfloor\\
	\mu^0_2+\vep_t,& t=\lfloor T\tau^0\rfloor+1,...,T.\end{cases}
\eenr
Here $\vep_t\in\R^p,$ $t=1,...,T$ are the unobserved noise random variables, which are assumed to be independent and identically distributed (i.i.d.) realizations of a $p$-dimensional zero mean subgaussian distribution\footnote{Recall that for $\al>0,$ the random variable $\eta$ is said to be $\al$-subgaussian if, for all $t\in\R,$ $E[\exp(t\eta)] \le \exp(\al^2t^2/2).$ Similarly, a random vector $\xi\in\R^p$ is said to be $\al$-subgaussian if the inner products $\langle\xi, v\rangle$ are $\al$-subgaussian for any $v\in\R^p$ with $\|v\|_2 = 1.$}. The observed variable is $y_t\in\R^p,$ and the unknown parameters are the means $\mu^0_1,\mu^0_2\in\R^p,$ and the change point parameter $\tau^0\in (0,1],$ with the latter being of main interest in this article. Note that, the case of `no change' is allowed by the model (\ref{model:subgseries}), since we allow $\tau^0=1,$ in its parametric space. In this case, model (\ref{model:subgseries}) reduces to $T$ observations of a stationary mean subgaussian distribution. Finally, we allow the dimension $p$ to diverge potentially at an exponential rate, i.e., $\log p=o(T^{\delta}),$ for some $0<\delta<1/2,$ while making a sparsity assumption to be specified in the following section.

The two main inferential problems of interest on $\tau^0$ of model (\ref{model:subgseries}) are, (a) whether a change point exists, i.e., test for the null hypothesis $H_0:\,\tau^0=1,$ and (b) construction of a confidence interval for the parameter $\tau^0$ when it exists, i.e., when $\tau^0<1.$ Despite the simplicity of model (\ref{model:subgseries}), the current literature discussing these inferential problems in the high dimensional setup is very sparse. Infact, in this high dimensional setting, solutions are available largely for problem (a), i.e., for the detection of a change point, see for e.g. \cite{enikeeva2013high}, \cite{wang2019inference}, \cite{li2019change} and \cite{steland2018inference} among others. In context of problem (b), the articles of \cite{bai2010common} and \cite{bhattacharjee2019change} consider the same linear single shift process as considered in this article. They develop inferential results using the ordinary least squares estimator applied directly on the $p$-dimensional model (\ref{model:subgseries}). The work of \cite{bai2010common} allows the dimension $p$ to diverge at an  arbitrarily rate with $T.$ The cost of such generality is paid by assuming a very large jump size $\xi=\|\mu_1^0-\mu_2^0\|_2,$ wherein the article assumes a diverging jump size satisfying $\xi/\surd{p}\to \iny,$ in order to obtain $T$-consistency of the estimate. The article of \cite{bhattacharjee2019change} considers a similar least square estimator, and assumes the jump size to satisfy $\xi\surd(T/p)\to \iny.$ While this assumption allows a diminishing jump size, however it does so only in the low dimensional case where $p/T\to 0.$ In the high dimensional setting, this condition again is only satisfied under a diverging jump. These two articles together illustrate the fact that either very large jump sizes, or low dimensions may be required in order to perform inference on the change point when the estimate is extracted from a high dimensional data set, without using any sparsity assumptions. On the other hand, it has also recently been shown in \cite{liu2019minimax} that assuming sparsity of the jump vector, much weaker signals in the jump size are detectable. Specifically, they show that the region of detectability (ROD) of the change point satisfies a  minimax rate of $\xi^{-1}\surd\big\{s\log (p\vee T)/T\big\}\le c,$ upto other logarithmic terms in $s$ and $T,$ under restrictions on the sparsity parameter $s.$ We refer to their article for the precise minimax rate which involves a tripe iterated log expression. In this more realistic high dimensional setup where the jump size is not arbitrarily large, \cite{wang2018high} provide a sparse projection estimator that yields a near optimal rate of convergence $\{\log(\log T)\}/T.$ To the best of our knowledge, this is at present the sharpest result regarding the rate of convergence of a change point estimate available in the literature, under high dimensionality, without a diverging jump assumption. In this setting, there is currently no available estimator of the change point $\tau^0$ that yields an optimal rate of convergence ($1/T$). Consequently there are no available limiting distribution results or methods to construct confidence regions for $\tau^0.$ The overarching objective of this article is to propose a sufficiently well behaved projected least squares estimator for $\tau^0,$ that is optimal ($T$-consistent) in its rate of convergence in the assumed high dimensional setting, while allowing the change point to potentially diminish under the restriction $\xi^{-1}\big\{s\log (p\vee T)\big/\surd T\}\le c,$ i.e., the jump size potentially being near the boundary of the ROD upto a factor of $\surd\big\{s\log (p\vee T)\big\}.$ Next, another important objective is to derive its limiting distribution in order to enable construction of confidence intervals for the change parameter $\tau^0.$ Other more subtle advantages of the methodology to be proposed are: (i) the ability to consistently filter out the case of $\tau^0=1,$ in a preliminary regularized estimation step, thus eliminating the need for pre-testing for the existence of a change point. This boundary case shall be excluded for the discussion in Section \ref{sec:intro}  and Section \ref{sec:mainresults} and shall be brought up in Section \ref{sec:nuisance}; (ii) Relaxing the assumption of gaussianity to subgaussianity, and additionally allowing for a general positive definite spatial dependence structure; Finally, (iii) to provide a computationally efficient and highly scalable methodology, specifically, the method to be proposed has no requirement of any algorithmic optimization for the entire procedure. Instead, we shall require only arithmetic operations and explicit identification of a minima amongst $T$ numbers for implementation of the proposed methods.

We begin with the necessary groundwork to proceed further. For any $z_t\in\R,$ $t=1,...,T,$ let $z=(z_1,...,z_T)^T,$ and for any $\theta_1,\theta_2\in\R,$ and $\tau\in(0,1),$ define the least squares loss,
\benr\label{eq:Q}
Q(z,\tau,\theta_1,\theta_2)=\frac{1}{T}\sum_{t=1}^{\lfloor T\tau\rfloor}(z_t-\theta_1)^2+\frac{1}{T}\sum_{t=\lfloor T\tau\rfloor+1}^{T}(z_t-\theta_2)^2.
\eenr
Additionally, let $\eta^0=\mu_1^0-\mu_2^0\in\R^p,$ $\theta_1^0=\eta^{0T}\mu_1^0\in\R,$ and $\theta_2^0=\eta^{0T}\mu_2^0\in\R.$ Then define a latent one dimensional projection of $y_t$ of (\ref{model:subgseries}) as,
\benr\label{mod:projectedseries}
z_t=\eta^{0T}y_t=\begin{cases}\theta^0_1+\psi_t, &t=1,...,\lfloor T\tau^0\rfloor\\
	\theta^0_2+\psi_t,& t=\lfloor T\tau^0\rfloor+1,...,T,\end{cases}
\eenr
where $\psi_t=\eta^0\vep_t,$ $t=1,...,T.$ Clearly the series $\{z_t\}_{1}^T$ is unobservable, since the nuisance parameters $\eta^0,$ is unknown. It may be of interest to note that the model (\ref{mod:projectedseries}) is the same latent projection that lies at the heart of the methodology of \cite{wang2018high}, wherein the authors proceed to recovery of the change point by seeking an optimal projection via a singular value decomposition together with a CUSUM transformation. In contrast, we take a more simpler route via least squares.

Now, suppose estimates $\h\mu_1,$ $\h\mu_2,$ are available such that with probability at least $1-o(1),$ the following bounds are satisfied.
\benr\label{eq:optimalmeans}
\|\h\mu_1-\mu_1^0\|_2\le c_u\si_{\vep}\Big(\frac{s \log (p\vee T)}{Tl_T}\Big)^{\frac{1}{2}},\quad {\rm and }\quad \|\h\mu_2-\mu_2^0\|_2\le c_u\si_{\vep}\Big(\frac{s \log (p\vee T)}{Tl_T}\Big)^{\frac{1}{2}}
\eenr
where $s$ is a sparsity parameter defined in Condition A of Section \ref{sec:mainresults}, and $0<l_T<1/2$ is sequence separating the unknown change point from the boundaries of $(0,1),$ i.e., $(\tau^0)\vee (1-\tau^0) \ge l_T.$ The parameter $\si_{\vep}$ is the variance proxy of the $p$-dimensional subgaussian vector $\vep_t$ (Condition B). The availability of these mean estimates is assumed only for the time being (Section \ref{sec:intro} and Section \ref{sec:mainresults}), and for the purpose of a clear presentation of the main idea enabling inference on $\tau^0.$ In Section \ref{sec:nuisance} we provide two distinct approaches to obtain such estimates via regularization.

Let $\h\eta=\h\mu_1-\h\mu_2,$ $\h\theta_1=\h\eta^T\h\mu_1$ and $\h\theta_2=\h\eta^T\h\mu_2.$ Then define the observable one dimensional surrogate $\h z=(\h z_1,...,\h z_T)^T,$ of $z,$ where $\h z_t=\h\eta^T y_t,$ $t=1,...,T.$ Under this setup we propose the projected least squares estimate defined as,
\benr\label{est:optimal}
\tilde\tau=\argmin_{\tau\in(0,1)} Q(\h z,\tau,\h\theta_1,\h\theta_2)
\eenr
The two distinctions between the estimator (\ref{est:optimal}) and the least squares estimator of \cite{bai2010common} and \cite{bhattacharjee2019change} are that, first, we use regularized mean estimates $\h\mu_1$ and $\h\mu_2$ satisfying (\ref{eq:optimalmeans}) in the construction of the proposed $\tilde\tau,$ in comparison to ordinary empirical means as considered in (\cite{bai2010common}) and \cite{bhattacharjee2019change}. This distinction allows control certain empirical processes that show up as residual terms in the estimation of $\tau^0.$ Secondly, the proposed $\tilde\tau$ estimate is extracted from a one dimensional projected series, instead of being extracted directly from the observed $p$-dimensional series. These improvements provide sufficient regularity to the change point estimate $\tilde\tau,$ and we shall show that despite using irregular estimates $\h\mu_1,$ and $\h\mu_2$ that are not root-$T$ consistent, the estimate $\tilde \tau$ satisfies an optimal rate of convergence, $T\xi^2(\tilde\tau-\tau^0)= O_p(1),$ under mild conditions. Furthermore, we shall obtain its limiting distribution, given by,
\benr\label{eq:limitingdist}
T\xi^2\si^{-2}(\tilde\tau-\tau^0)\Rightarrow \argmin_{v}\big(|v|-2W(v)\big)
\eenr
where $\si^2=\lim_{T\to\iny}(\eta^{0T}\Sigma_{\vep}\eta^0\big)/\xi^2,$ $\Sigma_{\vep}={\rm cov}(\vep_t),$ and $W(\cdot)$ is a two-sided Brownian motion on $\R.$ It may be observed that the limiting distribution obtained here is the same as that of the least squares estimate of $\tau^0$ in a one dimensional time series, (\cite{bai1994}). The distribution of  $\argmin_{v}\big(|v|-2W(v)\big)$ is infact well studied in the literature and approximations of its cumulative distribution function and thus its quantiles are readily available, (\cite{yao1987approximating}). Our results shall allow the validity of this discussion in the high dimensional regime under mild technical conditions. The jump size as before may potentially be near the boundary of the ROD.

It is fairly unusual for irregular estimates of some parameters of a model that are slower than root-$T,$ to yield an optimal estimate of another parameter of the model, as achieved by the proposed $\tilde\tau$ estimate. However, precedents for it are available in the recent high dimensional inference literature for static regression models. To describe this connection, first consider the following motivating heuristical insight. Localizing the change point estimate obtained from the projected series $\{\h z_t\}_1^T,$ requires control on a noise term of the form $|\sum_t \h\eta^T\vep_t|\big/T,$ which can be bounded as follows,
\benr\label{eq:term1}
\frac{1}{T}|\sum_t \h\eta^T\vep_t|\le\frac{1}{T}|\sum_t\eta^{0T}\vep_t|+ \|\h\eta-\eta^0\|_2\sup_{\delta\in\cA; \|\delta\|_2=1}\frac{1}{T}|\sum_t \delta^T\vep_t|,
\eenr
where $\cA$ is a convex subset of $\R^p$ to which $(\h\eta-\eta^0)\big/\|(\h\eta-\eta^0)\|_2$ can be restricted to using regularization (discussed in Section \ref{sec:nuisance}). For illustration purposes, consider the simplified case where $\xi=O(1)$ and $l_T\ge c>0.$ Then, clearly the first term on the right hand side (rhs) of (\ref{eq:term1}) is $O_p(1\big/\surd{T}).$ From (\ref{eq:optimalmeans}) we have $\|\h\eta-\eta^0\|_2=O_p\big[\surd \{s\log (p\vee T)/T\}\big],$ and finally it can also be shown that the empirical process in second term of the rhs of (\ref{eq:term1}) can be restricted to $O_p\big[\surd \{s\log (p\vee T)/T\}\big].$ This yields, $|\sum_t \h\eta^T\vep_t|= O_p\big(1/\surd{T}\big)+O_p\big\{s\log (p\vee T)\big/T\big\}=O_p\big(1/\surd{T}\big),$ under the rate assumption $s\log p\big/\surd{T}\to 0.$ Notice here that the noise term considered in (\ref{eq:term1}) can be controlled at an optimal $1/\surd T$ rate, despite irregular estimates $\h\mu_1,$ $\h\mu_2$ that are slower than root-T. Note that, by nature of the estimators of \cite{bai2010common} and \cite{bhattacharjee2019change} where ordinary empirical means are used, the same control on the desired noise process may not be achievable. Thus their methodologies instead require a much larger jump size so as to dominate such noise terms. This forms one of the main reasons for the proposed estimate to achieve the optimal rate, without assuming a diverging jump size. This effect is conceptually identical to that obtained by the use of orthogonal moment functions in the context of inference on regression parameters, which in the recent past have been utilized for the construction of confidence regions for mean parameters in high dimensional regression models, e.g. \cite{belloni2011inference}, \cite{belloniinference}, \cite{van2014asymptotically},  \cite{belloni2017confidence}, and \cite{ning2017general} among others.

We conclude this section with a note on the computation of $\tilde\tau.$ Given the availability of mean estimates $\h\mu_1$ and $\h\mu_2,$ observe that the least squares loss function $Q(\h z, \cdot,\h\mu_1,\h\mu_2)$ is a step function in the interval $(0,1),$ with step changes occurring at the grid points $\{1/T,2/T,....(T-1)/T\}.$ This observation reduces computation of (\ref{est:optimal}) to a discrete optimization on a one dimensional grid of $(T-1)$ points, i.e., we can equivalently obtain $\tilde\tau$ as,
\benr\label{est:discrete}
\tilde\tau=\argmin_{\tau\in\{\frac{1}{T},\frac{2}{T},...\frac{T-1}{T}\}}Q(\h z,\tau,\h\theta_1,\h\theta_2).
\eenr
This optimization can be implemented simply by calculating $Q(\h z, \tau,\h\mu_1,\h\mu_2),$ for each $\tau\in\{\frac{1}{T},\frac{2}{T},...\frac{T-1}{T}\}$ and then explicitly locating the minimizing argument, i.e, implementation of (\ref{est:discrete}) involves only $T$ arithmetic operations.

The following sections provide a rigorous presentation of the above discussion as well as the thus far disregarded aspect of obtaining computationally efficient nuisance estimates satisfying (\ref{eq:optimalmeans}), which can additionally filter out the `no change' case consistently.

\vspace{1.5mm}
\noi{\it Notation}: Throughout the paper, $\R$ represents the real line. For any vector $\delta\in\R^p,$ $\|\delta\|_1,$ $\|\delta\|_2,$ $\|\delta\|_{\iny}$ represent the usual 1-norm, Euclidean norm, and sup-norm respectively. For any set of indices $U\subseteq\{1,2,...,p\},$ let $\delta_U=(\delta_j)_{j\in U}$ represent the subvector of $\delta$ containing the components corresponding to the indices in $U.$ Let $|U|$ and $U^c$ represent the cardinality and complement of $U.$ We denote by $a\wedge b=\min\{a,b\},$ and $a\vee b=\max\{a,b\},$ for any $a,b\in\R.$ The notation $\lfloor \cdotp \rfloor$ is the usual greatest integer function. We use a generic notation $c_u>0$ to represent universal constants that do not depend on $T$ or any other model parameter. In the following this constant $c_u$ may be different from one term to the next. All limits in this article are with respect to the sample size $T\to\iny.$ We use the notation $\Rightarrow$ to represent convergence in distribution.

\section{Main results}\label{sec:mainresults}
In this section we state our assumptions and main theoretical results regarding $T$-consistency and the limiting distribution (\ref{eq:limitingdist}) of the project least squares estimator.

\vspace{1.5mm}
{\it {{\noi{\bf Condition A (assumption on model parameters):}} (i) Let $S=S_1\cup S_2,$ where $S_1=\{j;\mu^0_{1j}\ne 0\}$ and $S_2=\{j;\mu^0_{2j}\ne 0\}.$ Then for some $s=s_T\ge 1,$ we assume that $|S|\le s.$  (ii) The model dimensions $s,p,T,$ satisfy the rate $s\log p\big/\surd T\to 0.$
		(iii) Assume a change point exists and is sufficiently separated from the boundaries of $(0,1),$ i.e., for some positive sequence $l_T>0,$ we have $(\tau^0)\wedge (1-\tau^0) \ge l_T.$ Additionally, the jump vector $\eta^0=\mu_1^0-\mu_2^0$ is such that the jump size $\xi=\|\eta^0\|_2,$ together with $l_T$ satisfies the following restriction,
		\benr
		\frac{\si_{\vep}}{\xi}\Big\{\frac{s\log (p\vee T)}{\surd{T l_T}}\Big\} \le c_{u},\nn
		\eenr
		for an appropriately chosen small enough constant $c_{u}>0.$
}}

The sparsity assumption of Condition A(i) is typically made on the jump vector $\eta^0,$ as done in \cite{wang2018high} and \cite{enikeeva2013high}. In contrast we make this assumption directly on the mean vectors $\mu_1^0$ and $\mu_2^0.$ These two variations of the sparsity assumption are equivalent, which can be seen as follows. Consider $y_t$ of model (\ref{model:subgseries}) such that the jump $\eta^0$ is $s$-sparse, i.e., there is a mean change in at most $s$ components. Then upon centering $y_t$ with columnwise empirical means, $y_t^*=y_t-\bar y,$ $t=1,...,T,$ with $\bar y=\sum_{t=1}^T y_t\big/T,$ the sparsity of $\eta^0$ is transferred onto the new mean vectors $\mu_1^*=Ey_t^*,$ $t\le \lfloor T\tau^0\rfloor,$ and $\mu_2^*=Ey_t^*,$ $t>\lfloor T\tau^0\rfloor,$ in the sense of Condition A(i). All results of this article can also be developed by directly assuming sparsity of the jump vector. However we use Condition A(i) solely to easy notational complexity in some of the proofs. In the rest of this article we assume that the series $y_t$ has been centered, thus allowing Condition A(i) to be applicable. Condition A(ii) restricts the rate of divergence of model dimensions, this assumption is consistent with the recent literature on inference for regression coefficients in high dimensional linear regression models, see, e.g. \cite{belloni2017confidence}, and \cite{ning2017general} among others. Condition A(iii) assumes existence of a change point within the sampling period and its sufficient separation from the boundaries of $(0,1).$ This assumption is made for the inference methodology of this section. However, we shall relax this condition in Section \ref{sec:nuisance} to include $\tau^0=1$ in the prestep estimation process and thus filter out this case consistently before the inference methodology is implemented. The remaining assumptions of Condition A(iii) puts us in the regime where the jump size is potentially close to the boundary of the ROD upto a factor of $\surd\{s\log (p\vee T)\}.$ This condition is only marginally stronger than (17) assumed in \cite{wang2018high} and plays a key role in yielding optimality of the proposed projected least squares estimator. No assumption on upper bounds for the jump size are made.

\vspace{1.5mm}
{\it {{\noi{\bf Condition B (assumption on the model distribution):}} The vectors $\vep_t=(\vep_{t1},...,\vep_{tp})^T,$ $t=1,..,T,$ are i.i.d subgaussian with mean vector zero, and variance proxy $\si_{\vep}^2\le c_u.$ Furthermore, the covariance matrix $\Sigma_{\vep}:=E\vep_t\vep_t^T$ has bounded eigenvalues, i.e., $0<\ka\le\rm{min eigen}(\Si_{\vep})<\rm{max eigen}(\Si_{\vep})\le\phi<\iny.$}}

Condition B is fairly standard in the high dimensional literature. This condition assumes temporal independence and a general positive definite covariance structure spatially.  It does not require any specific spatial dependence structure such as those in \cite{liu2019minimax} or the assumption of gaussianity as considered in \cite{wang2018high}. More specifically, this condition serves two purposes. Firstly, it allows the residual process in the estimation of $\tilde\tau$ to converge weakly to the distribution in (\ref{eq:limitingdist}). Secondly, under a suitable choice of parameters, it allows estimation of nuisance parameters at the rates of convergence presented in (\ref{eq:optimalmeans}) by one of several estimators. For the presentation of this section we are agnostic about the choice of the nuisance estimator and instead require the following condition.

\vspace{1.5mm}
{\it {{\noi{\bf Condition C (assumption nuisance parameter estimates):}} Let $\Delta_T\to 0$ be a fixed sequence. Then with probability $1-\Delta_T,$ the estimators $\h\mu_1$ and $\h\mu_2$ satisfy (\ref{eq:optimalmeans}). Additionally, with the same probability, the vectors $(\h\mu_1-\mu_1^0),$ $(\h\mu_2-\mu_2^0)\in\cA.$ Here $\cA$ is a convex subset of $\R^p$ defined as, ${\cA}=\big\{\delta\in\R^p;\,\,\|\delta_{S^c}\|_1\le {\it c_u} \|\delta_S\|_1\big\},$ with $S$ being the set of indices defined in Condition A(i).}}

A few notations are necessary to proceed further. For any $z\in\R^T,$ and $\tau,\theta_1,\theta_2\in\R$ define,
\benr\label{def:cU}
\cU(z,\tau,\theta_1,\theta_2)=Q(z,\tau,\theta_1,\theta_2)-Q(z,\tau^0,\theta_1,\theta_2),\nn
\eenr
where $\tau^{0}\in(0,1)$ is the unknown change point parameter and $Q$ is the least squares loss as defined in (\ref{eq:Q}). Also, for any non-negative sequences $u_T,$ and $v_T,$ with  $v_T\le u_T,$ define the collection,
\benr\label{def:setcG}
\cG(u_T,v_T)=\Big\{\tau\in (0,1);\,\,Tv_T\le \big|\lfloor T\tau \rfloor-\lfloor T\tau^0\rfloor\big|< Tu_T\Big\}
\eenr

We begin with a lemma that provides a uniform lower bound on the expression $\cU(\h z, \tau,\h\theta_1,\h\theta_2),$ over the collection $\cG(u_T,v_T).$ This lower bound forms the basis of the argument used to obtain $T$-consistency of the proposed estimator.

\begin{lem}\label{lem:mainlowerb} Suppose Conditions A, B and C hold. Let $u_T$ and $v_T$ be any non-negative sequences and let $\cG(u_T,v_T)$ be as defined in (\ref{def:setcG}). Then for any $0<\gamma<1,$ there exists a constant $c_{u1},$ such that the following uniform lower bound holds.
	\benr
	\inf_{\tau\in\cG(u_T,v_T)}\cU(\h z,\tau,\h\theta_1,\h\theta_2)\ge c_u\xi^4\Big\{v_T-\frac{c_{u1}\si_{\vep}}{\xi}\Big(\frac{u_T}{T}\Big)^{\frac{1}{2}}\Big\},\nn
	\eenr
	with probability at least $1-\gamma-\Delta_T-o(1).$
\end{lem}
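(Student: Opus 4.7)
The strategy is to decompose $\cU(\h z,\tau,\h\theta_1,\h\theta_2)$ into a signal term linear in $m=|\lfloor T\tau\rfloor-\lfloor T\tau^0\rfloor|$ plus a centered noise term, and to bound each uniformly over $\cG(u_T,v_T)$. I work throughout on the event of Condition~C, so that $\|e_i\|_2\le c_u\si_\vep\{s\log(p\vee T)/(Tl_T)\}^{1/2}$ with $e_i=\h\mu_i-\mu_i^0\in\cA$. By symmetry assume $\tau>\tau^0$, so the two losses differ only on the index range $\{\lfloor T\tau^0\rfloor+1,\ldots,\lfloor T\tau\rfloor\}$, where $y_t=\mu_2^0+\vep_t$. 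Applying the identity $(a-\h\theta_1)^2-(a-\h\theta_2)^2=-\|\h\eta\|_2^2(2a-\h\theta_1-\h\theta_2)$ with $a=\h z_t$, and using $2\mu_2^0-\h\mu_1-\h\mu_2=-\eta^0-e_1-e_2$, produces the decomposition
\[
\cU=\frac{m\|\h\eta\|_2^2}{T}\bigl[\h\eta^T\eta^0+\h\eta^T(e_1+e_2)\bigr]-\frac{2\|\h\eta\|_2^2}{T}\sum_{t=\lfloor T\tau^0\rfloor+1}^{\lfloor T\tau\rfloor}\h\eta^T\vep_t.
\]

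For the signal, Condition~A(iii) combined with the Condition~C bound forces $\|\h\eta-\eta^0\|_2=\|e_1-e_2\|_2=o(\xi)$; the triangle inequality then gives $\|\h\eta\|_2^2\ge(1-o(1))\xi^2$, $\h\eta^T\eta^0\ge(1-o(1))\xi^2$, and $|\h\eta^T(e_1+e_2)|\le \|\h\eta\|_2(\|e_1\|_2+\|e_2\|_2)=o(\xi^2)$. Using $m/T\ge v_T$ on $\cG(u_T,v_T)$, the signal is therefore at least $c_u\xi^4 v_T$.

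For the noise, split $\h\eta^T\vep_t=\eta^{0T}\vep_t+(\h\eta-\eta^0)^T\vep_t$. For the leading piece, $\eta^{0T}\vep_t$ is an i.i.d.\ mean-zero sequence whose variance is at most $\si_\vep^2\xi^2$ by Condition~B and the definition of subgaussianity, so Kolmogorov's maximal inequality applied to partial sums of length at most $Tu_T$, followed by Markov's inequality, yields
\[
\max_{1\le m\le Tu_T}\Bigl|\sum_{t=1}^{m}\eta^{0T}\vep_{\lfloor T\tau^0\rfloor+t}\Bigr|\le c\gamma^{-1/2}\si_\vep\xi\sqrt{Tu_T}
\]
with probability at least $1-\gamma$. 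The Markov route, rather than an exponential-tail union bound, is critical: it avoids the spurious $\log T$ inflation and is why the lemma's bound is free of logarithmic factors. For the residual piece, $|\sum(\h\eta-\eta^0)^T\vep_t|\le(\|e_1\|_1+\|e_2\|_1)\,\|\sum\vep_t\|_\infty$, where the cone property $e_i\in\cA$ gives $\|e_i\|_1\le c\sqrt{s}\|e_i\|_2\le c\si_\vep s\{\log(p\vee T)/(Tl_T)\}^{1/2}$, and a subgaussian partial-sum bound plus a union bound over the $p$ coordinates delivers $\max_{m\le Tu_T}\|\sum_{t=1}^m\vep_t\|_\infty\lesssim \si_\vep\sqrt{Tu_T\log(pT)}$ with probability $1-o(1)$. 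Condition~A(iii), in the form $\si_\vep s\log(p\vee T)/(\xi\sqrt{Tl_T})\le c_u$, then ensures that this residual is dominated by the leading piece. Multiplying by $2\|\h\eta\|_2^2/T\lesssim \xi^2/T$ produces a uniform noise bound $c_{u1}\xi^3\si_\vep\sqrt{u_T/T}$.

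Subtracting the noise upper bound from the signal lower bound on the intersection of the three events gives the claim with probability at least $1-\gamma-\Delta_T-o(1)$. The principal technical obstacle is obtaining uniformity of the noise control with no logarithmic inflation, which the Markov step accomplishes at the cost of a $\gamma$-dependent constant; a secondary but important point is showing the nuisance-induced cross term $(\h\eta-\eta^0)^T\vep_t$ is indeed dominated, which is exactly where Condition~A(iii) is invoked and explains why the jump size must sit inside the region of detectability by a factor of $\sqrt{s\log(p\vee T)}$.
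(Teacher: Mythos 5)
Your proof is correct and follows essentially the same route as the paper's: the identical signal-plus-noise decomposition of $\cU$ over the index range between $\lfloor T\tau^0\rfloor$ and $\lfloor T\tau\rfloor$, the same $c_u\xi^4 v_T$ lower bound on the signal via Conditions A(iii) and C, and the same splitting of the noise into $\eta^{0T}\vep_t$ and $(\h\eta-\eta^0)^T\vep_t$ (the paper's Lemma B.3). The only differences are technical: you invoke Kolmogorov's maximal inequality plus Chebyshev to get uniform, logarithm-free control of the leading noise piece (making explicit the uniformity that the paper asserts from the pointwise subgaussian bound, and correctly locating the source of the $\gamma$ in the probability), and you treat the cross term by H\"older with the cone bound $\|e_i\|_1\le c\surd{s}\,\|e_i\|_2$ rather than the paper's covering-number argument over $\cA^*$ (Lemma B.2) --- both yield the same rates.
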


Our first main result to follow establishes the $T$-consistency of the projected least squares estimator $\tilde\tau.$ While the detailed proof of this result is provided in Appendix A, here we provide a brief sketch of the main idea. Overall the proof proceeds by a contradiction argument, which proves that the estimate $\lfloor T\tilde\tau\rfloor$ cannot lie anywhere except an $O(\xi^{-2})$ neighborhood of $\lfloor T\tau^0\rfloor,$ in probability. More specifically, using Lemma \ref{lem:mainlowerb} recursively, we show that for any $v_T$ slower in rate than $O(T^{-1}\xi^{-2})$ we have,
\benr
\inf_{\tau\in\cG(1,v_T)}\cU(\h z,\tau,\h\theta_1,\h\theta_2)>0,\nn
\eenr
in probability. Upon noting that by definition $\tilde\tau$ must satisfy $\cU(\h z,\tau,\h\theta_1,\h\theta_2)\le 0,$ the argument shall yield the desired $T$-consistency.

\begin{thm}\label{thm:optimalapprox} Suppose Conditions A, B and C hold. Then the projected least squares estimate $\tilde\tau$ satisfies the bound, $\si_{\vep}^{-2}\xi^2\big(\lfloor T\tilde\tau\rfloor-\lfloor T\tau^0\rfloor\big)=O_p(1).$
\end{thm}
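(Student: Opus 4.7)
The plan is to exploit the optimality of $\tilde\tau$ together with the uniform lower bound of Lemma~2.1 via a bootstrapping recursion. First, I would note the defining inequality $\cU(\hat z,\tilde\tau,\hat\theta_1,\hat\theta_2)\le 0$, which is automatic from $\tilde\tau$ minimising $Q(\hat z,\cdot,\hat\theta_1,\hat\theta_2)$ and the fact that $\tau^0$ is a feasible competitor. The strategy is to show that $\tilde\tau\notin\cG(u_T,v_T)$ whenever $v_T$ exceeds the noise threshold produced by Lemma~2.1; combining this with the trivial containment $\tilde\tau\in(0,1)$ will yield a first crude bound, which I will then sharpen by iterating.

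Fix a large constant $C>c_{u1}$ and, given an upper envelope $u_T$, set $v_T=C(\si_{\vep}/\xi)(u_T/T)^{1/2}$. Lemma~2.1 then gives
\benr
\inf_{\tau\in\cG(u_T,v_T)}\cU(\hat z,\tau,\hat\theta_1,\hat\theta_2)\ge c_u\xi^4(C-c_{u1})(\si_{\vep}/\xi)(u_T/T)^{1/2}>0\nn
\eenr
with probability at least $1-\gamma-\Delta_T-o(1)$. On this event $\tilde\tau\notin\cG(u_T,v_T)$, so either $|\lfloor T\tilde\tau\rfloor-\lfloor T\tau^0\rfloor|<Tv_T$, or $|\lfloor T\tilde\tau\rfloor-\lfloor T\tau^0\rfloor|\ge Tu_T$. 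The second possibility is ruled out as long as $u_T$ is a valid upper envelope; starting from the trivial $u_T^{(0)}=1$ (which is valid because $\tilde\tau\in(0,1)$), this produces $u_T^{(1)}=C(\si_{\vep}/\xi)T^{-1/2}$. Feeding this back into the same argument yields $u_T^{(k+1)}=C(\si_{\vep}/\xi)(u_T^{(k)}/T)^{1/2}$, a contractive recursion in $\log u_T^{(k)}$ whose fixed point is $u_T^\star=C^2\si_{\vep}^2/(\xi^2 T)$. Hence $Tu_T^{(k)}$ decreases geometrically (in log scale) to $C^2\si_{\vep}^2/\xi^2$.

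Because each iterate halves the deviation $\log u_T^{(k)}-\log u_T^\star$, only $K=O(\log\log T)$ iterations are needed to reach $u_T^{(K)}\le 2u_T^\star$. Taking a union bound over these $K$ events inflates the exceptional probability to $K(\gamma+\Delta_T+o(1))$, which is still $o(1)$ under Conditions A--C (any $\gamma$ and $\Delta_T\to 0$ suffice since $K$ grows only doubly logarithmically). This delivers $|\lfloor T\tilde\tau\rfloor-\lfloor T\tau^0\rfloor|\le 2C^2\si_{\vep}^2/\xi^2$ with probability $1-o(1)$, which is the claim $\si_{\vep}^{-2}\xi^2(\lfloor T\tilde\tau\rfloor-\lfloor T\tau^0\rfloor)=O_p(1)$.

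A few points will need care. First, one must verify that at each iteration $v_T^{(k)}\le u_T^{(k)}$ so that $\cG(u_T^{(k)},v_T^{(k)})$ is well-defined as a nonempty rate region; this follows from $u_T^{(k)}\ge u_T^\star$ and Condition~A(iii), which via $(\si_\vep/\xi)\{s\log(p\vee T)/\sqrt{Tl_T}\}\le c_u$ ensures $(\si_\vep/\xi)T^{-1/2}\ll 1$ throughout the recursion. Second, one must stay within the separation $l_T$ from the endpoints of $(0,1)$ so that Lemma~2.1 remains applicable; the crude first step $Tv_T^{(1)}=C(\si_\vep/\xi)\sqrt{T}$ is well below $Tl_T$ by Condition~A(iii). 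The main obstacle, beyond bookkeeping the union bound over iterations, is this compatibility with the ROD-boundary condition in Condition~A(iii): the entire recursion is designed so that the initial crude bound already lies well inside the admissible window, which is precisely where the assumption $(\si_\vep/\xi)\{s\log(p\vee T)/\sqrt{Tl_T}\}\le c_u$ is used beyond its role in the lemma itself.
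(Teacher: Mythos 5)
Your overall strategy coincides with the paper's: the defining inequality $\cU(\h z,\tilde\tau,\h\theta_1,\h\theta_2)\le 0$, the uniform lower bound of Lemma \ref{lem:mainlowerb} applied on $\cG(u_T,v_T)$, and the recursion $u_T^{(k+1)}=C(\si_{\vep}/\xi)(u_T^{(k)}/T)^{1/2}$ started from $u_T^{(0)}=1$ with fixed point $C^2\si_{\vep}^2/(\xi^2T)$ are exactly the paper's argument (there the $m$-th iterate is written as $T(\si_{\vep}/\xi)^{b_m}(1/T)^{a_m}$ with $a_m\to 1$, $b_m\to 2$). The one genuine gap is in your probability bookkeeping at the end. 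You take a union bound over the $K=O(\log\log T)$ recursions and assert that $K(\gamma+\Delta_T+o(1))$ is still $o(1)$ ``for any $\gamma$''; this is false, since for a fixed $\gamma>0$ the quantity $K\gamma$ diverges with $T$. The natural repair of letting $\gamma=\gamma_T\to 0$ so that $K\gamma_T\le\epsilon$ does not work either, because the constant $c_{u1}=c_{u1}(\gamma)$ in Lemma \ref{lem:mainlowerb} (coming from the subgaussian tail bound behind Lemma \ref{lem:stocb}) grows without bound as $\gamma\downarrow 0$, so the fixed point $C^2\si_{\vep}^2/(\xi^2 T)$ with $C>c_{u1}(\gamma_T)$ would itself drift upward and you would only obtain $O_p$ of a slowly growing factor rather than the claimed $O_p(1)$.

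The paper avoids this by not taking a union bound at all: it observes that the stochastic bound of Lemma \ref{lem:stocb} is applied at each recursion over a shrinking collection $\cG(u_T^{(k)},0)$ with a correspondingly tighter threshold, and argues that the resulting exceptional events are nested (each a subset of the one at the previous recursion), so the probability guarantee $1-\gamma-\Delta_T-o(1)$ is preserved uniformly over all recursions, including the limit $m\to\iny$. Equivalently, one can realize all $K$ steps on a single event by proving one maximal inequality for the partial-sum process $\sum_t\h\eta^T\vep_t$ that holds simultaneously for all window widths around $\lfloor T\tau^0\rfloor$, which costs a single $\gamma$. Your remaining care points (that $v_T^{(k)}\le u_T^{(k)}$ so the rate regions are well defined, and that the iterates stay within the $l_T$-window so Lemma \ref{lem:mainlowerb} remains applicable, both guaranteed by Condition A(iii)) are correct and consistent with the paper.
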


A direct application of Theorem \ref{thm:optimalapprox} under a diverging jump size directly yields perfect identifiability of the change point on the integer valued scale. This is stated in the following corollary.

\begin{cor}\label{cor:perfectid} Suppose Conditions A, B and C hold and assume that $\xi\to \iny.$ Then,
	\benr
	pr\big(\lfloor T\tilde\tau\rfloor=\lfloor T\tau^0\rfloor\big)\to 1.\nn
	\eenr
\end{cor}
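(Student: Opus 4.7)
The plan is to derive this corollary as an essentially immediate consequence of Theorem \ref{thm:optimalapprox} by exploiting the integer-valued nature of the quantity $\lfloor T\tilde\tau\rfloor - \lfloor T\tau^0\rfloor$. The key observation is that once a non-negative integer random variable is shown to be $o_p(1)$, it must equal zero with probability tending to one.

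First I would invoke Theorem \ref{thm:optimalapprox} directly, which gives $\sigma_{\vep}^{-2}\xi^2 \bigl(\lfloor T\tilde\tau\rfloor - \lfloor T\tau^0\rfloor\bigr) = O_p(1)$, so that for any $\epsilon > 0$ there exists $M = M(\epsilon) > 0$ and $T_0 = T_0(\epsilon)$ such that
\benn
pr\Bigl(\bigl|\lfloor T\tilde\tau\rfloor - \lfloor T\tau^0\rfloor\bigr| \le M \sigma_{\vep}^2 \xi^{-2}\Bigr) \ge 1 - \epsilon \quad \text{for all } T \ge T_0.
\eenn
Next I would use Condition B, which supplies $\sigma_{\vep}^2 \le c_u$, together with the hypothesis $\xi \to \infty$, to conclude that $M \sigma_{\vep}^2 \xi^{-2} \to 0$ as $T \to \infty$. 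Consequently there is $T_1 = T_1(\epsilon) \ge T_0$ such that $M \sigma_{\vep}^2 \xi^{-2} < 1$ for all $T \ge T_1$.

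Finally I would combine these two facts: on the event $\{|\lfloor T\tilde\tau\rfloor - \lfloor T\tau^0\rfloor| \le M \sigma_{\vep}^2 \xi^{-2}\}$, the left hand side is a non-negative integer bounded above by a quantity strictly less than $1$, hence it must equal $0$. Therefore for all $T \ge T_1$,
\benn
pr\bigl(\lfloor T\tilde\tau\rfloor = \lfloor T\tau^0\rfloor\bigr) \ge 1 - \epsilon,
\eenn
and since $\epsilon > 0$ was arbitrary, the claimed convergence follows. There is no substantive obstacle here; the only thing to be careful about is that the $O_p(1)$ bound from Theorem \ref{thm:optimalapprox} is used in its standard tightness form, and that $\sigma_{\vep}$ is bounded above uniformly in $T$ so that the scaling factor $\sigma_{\vep}^2 \xi^{-2}$ genuinely vanishes.
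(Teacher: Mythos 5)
Your argument is correct and is exactly the route the paper takes: the paper presents Corollary \ref{cor:perfectid} as a direct consequence of Theorem \ref{thm:optimalapprox}, using the boundedness $\si_{\vep}^2\le c_u$ from Condition B and $\xi\to\iny$ to make the $O_p(\si_{\vep}^2\xi^{-2})$ bound on the integer-valued difference $\lfloor T\tilde\tau\rfloor-\lfloor T\tau^0\rfloor$ eventually fall below $1$, forcing it to equal zero with probability tending to one. Your write-up simply makes explicit the tightness quantifiers that the paper leaves implicit.
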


\begin{rem}\label{rem:optapprox} Note that the results of Theorem \ref{thm:optimalapprox} and Corollary \ref{cor:perfectid} are very similar to many results in the classical change point literature. However these result points towards the following subtlety regarding the rates of convergence of change point estimates in the integer $(\lfloor T\tilde\tau\rfloor)$ and continuous scales $(\tilde\tau)$ that has often been disregarded in the literature. Note that we have the deterministic inequality $\big(T(\tilde\tau-\tau^0)-1\big)\le \big(\lfloor T\tilde\tau\rfloor-\lfloor T\tau^0\rfloor\big)\le \big(T(\tilde\tau-\tau^0)+1\big).$ In the case where $\xi=O(1),$ an application of this inequality together with the result of Theorem \ref{thm:optimalapprox} directly implies that $T\xi^2(\tilde\tau-\tau^0)=O_p(1).$ However, when $\xi\to \iny,$ this may not be true. Instead, in this case we obtain $T(\tilde\tau-\tau^0)=O_p(1).$ Consequently, when $\xi\to \iny,$ while perfect identification (in probability) of the integer scale change point can be guaranteed using Theorem \ref{thm:optimalapprox}, the same cannot be said for the change point in the continuous scale, where the result of Theorem \ref{thm:optimalapprox} can only guarantee the rate $T(1\vee \xi^2)(\tilde\tau-\tau^0)=O_p(1).$
\end{rem}

Theorem \ref{thm:optimalapprox} establishes the optimality of the proposed method, despite irregular estimates $\h\mu_1,$ $\h\mu_2$ being used in its construction. Several important observations are discussed in the following. First and foremost is to note that Theorem \ref{thm:optimalapprox} is established while allowing the jump size to possibly be nearly at the boundary of the ROD \big(Condition A(iii)\big). An informative comparison illustrating the usefulness of the proposed estimator is with the least squares estimate applied to the entire $p$-dimensional data set, where \cite{bai2010common} requires a diverging jump size satisfying $\xi\big/\surd p\to \iny,$ and \cite{bhattacharjee2019change} require $\xi \surd(T/p)\to \iny,$ in order to obtain a similar optimality result. A closer comparison is with the related estimator of \cite{wang2018high}, which has been shown to satisfy the rate  $\log\log T/T$ (Theorem 1 of \cite{wang2018high}). In comparison to the optimal rate obtained by our estimator $\tilde\tau.$ However, we achieve this at the cost of being marginally further away from the boundary of the ROD by a factor of $\surd\{s\log (p\vee T)\}$ in comparison to their article. We believe that this points towards a delicate relationship between optimality of an estimator and the rate assumption on the jump size. Thus, it may very well be the case that the estimator of \cite{wang2018high} is also optimal under the marginally stronger Condition A(iii) on the jump size, as made in this article, especially since both methodologies are built upon the same latent projection (\ref{mod:projectedseries}). However, this connection is not pursued further in this article.

It may also be worth noting that the mean estimates $\h\mu_1$ and $\h\mu_2$ used to construct $\tilde\tau$ do not require to satisfy oracle type properties in the sense of \cite{fan2001variable}, i.e. the result of Theorem \ref{thm:optimalapprox} remains valid despite a potentially imperfect recovery of the support of $\mu_1^0$ and $\mu_2^0.$ Furthermore no minimum separation from zero conditions on the means $\mu_1^0$ and $\mu_2^0$ are assumed. This is again in coherence with recent developments for inference on regression coefficients in high dimensional linear regression models, see e.g. \cite{belloni2011inference}, \cite{belloni2017confidence}, \cite{van2014asymptotically} and \cite{ning2017general}.

Corollary \ref{cor:perfectid} provides the degenerate limiting behavior of the change point estimate on the integer valued scale. While the final conclusion of the result is identical to Theorem 3.2 of \cite{bai2010common} and Theorem 2.2(a) of \cite{bhattacharjee2019change},  the important distinguishing factors from those articles is again its applicability under (i) much weaker jump signal and (ii) high dimensional setting, respectively. The following result establishes the limiting distribution of the proposed estimate $\tilde\tau,$ in the regime where the jump size diminishes with $T.$

\begin{thm}\label{thm:limitingdist} Suppose Conditions A, B and C hold, and assume that,
	\benr \label{asm:jumpnew}
	\Big(\frac{\si_{\vep}}{\xi}\Big)\Big(\frac{s\log (p\vee T)}{\surd{Tl_T}}\Big) =o(1).
	\eenr
	Additionally assume that the jump size $\xi\to 0,$ and that  $(\eta^{0T}\Sigma_{\vep}\eta^0\big)\big/\xi^2\to \si^2.$ Then the projected least squares estimate $\tilde\tau$ obeys the following limiting distribution.
	\benr
	T\xi^2\si^{-2}(\tilde\tau-\tau^0)\Rightarrow \argmin_{r}\big(|r|-2W(r)\big),\nn
	\eenr
	where $W(\cdot)$ is a two sided Brownian motion\footnote{A two-sided Brownian motion $W(\cdot)$ is defined as $W(0)=0,$ $W(r)=W_1(r),$ $r>0$ and $W(r)=W_2(-r),$ $r<0,$ where $W_1(\cdot)$ and $W_2(\cdot)$ are two independent Brownian motions defined on the non-negative half real line, see e.g. \cite{bai1994} and \cite{bai2010common}.} on $\R.$
\end{thm}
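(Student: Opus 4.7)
The plan is to adapt the standard change-point limit-theory pipeline (cf.\ \cite{bai1994}) to the projection setup: localize using Theorem~\ref{thm:optimalapprox}, expand the objective $\cU(\hat z,\tau,\hat\theta_1,\hat\theta_2)$ algebraically around $\tau^0$, identify drift and stochastic parts at the correct scale, invoke a functional CLT for the latter, and conclude via an argmin continuous-mapping theorem. Set $v:=\lfloor T\tau\rfloor-\lfloor T\tau^0\rfloor$ and rescale by $r:=\xi^2 v$. Theorem~\ref{thm:optimalapprox} forces the random minimizer to satisfy $|\hat v|=O_p(\xi^{-2})$, so with probability tending to one it suffices to analyze the objective on $\{|v|\le M\xi^{-2}\}$ for an arbitrarily large but fixed $M$; the distributional limit will live on the $r$-scale.

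A direct algebraic expansion, writing $\hat\theta_2-\hat\theta_1=-\|\hat\eta\|_2^2$ and substituting $\hat z_t=\hat\eta^Ty_t$ with $y_t=\mu_2^0+\vep_t$ on the misclassified block, gives, for $v>0$,
\benn
T\,\cU(\hat z,\tau,\hat\theta_1,\hat\theta_2) \;=\; v\|\hat\eta\|_2^4 \;-\; 2v\|\hat\eta\|_2^2\,\hat\eta^T(\mu_2^0-\hat\mu_2)\;-\;2\|\hat\eta\|_2^2\,\hat\eta^T S_v,
\eenn
with a symmetric expression for $v<0$, where $S_v:=\sum_{t=\lfloor T\tau^0\rfloor+1}^{\lfloor T\tau^0\rfloor+v}\vep_t$. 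Using Condition~C and the rate (\ref{asm:jumpnew}) one first checks $\|\hat\eta\|_2^2=\xi^2(1+o_p(1))$. Dividing by $\xi^2$, the first summand becomes $v\xi^2(1+o_p(1))=r(1+o_p(1))$ uniformly on $|v|\le M\xi^{-2}$, while the cross term is uniformly $o_p(1)$ on the same range since its magnitude divided by $\xi^2$ is bounded by $2M\xi^{-1}\|\hat\mu_2-\mu_2^0\|_2(1+o_p(1))=o_p(1)$.

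For the noise, split $\hat\eta^T S_v=\eta^{0T}S_v+(\hat\eta-\eta^0)^T S_v$. The main piece is handled by a triangular-array Donsker invariance principle applied to the i.i.d.\ scalar sequence $\{\eta^{0T}\vep_t\}$, which has mean zero and variance $\eta^{0T}\Sigma_\vep\eta^0=\sigma^2\xi^2(1+o(1))$: evaluated at index $\lfloor r\xi^{-2}\rfloor$, the partial-sum process converges on compacts in $r$ to $\sigma W(r)$, a two-sided standard Brownian motion. Multiplying by $-2\|\hat\eta\|_2^2/\xi^2\to-2$ turns this into the noise contribution $-2\sigma W(r)$. The correction $(\hat\eta-\eta^0)^TS_v$ must be shown to be $o_p(\xi^2)$ uniformly on $|v|\le M\xi^{-2}$; this is handled by exploiting the cone restriction $\hat\eta-\eta^0\in\cA$ of Condition~C (which yields $\|\hat\eta-\eta^0\|_1\lesssim \sqrt{s}\,\|\hat\eta-\eta^0\|_2$) combined with a maximal subgaussian bound on $\sup_{|v|\le M\xi^{-2}}\|S_v\|_\infty$, which together with~(\ref{asm:jumpnew}) delivers the required uniform negligibility.

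Assembling the three pieces yields the process-level convergence
\benn
T\xi^{-2}\,\cU\bigl(\hat z,\,\tau^0+r\xi^{-2}/T,\,\hat\theta_1,\hat\theta_2\bigr)\ \Rightarrow\ |r|-2\sigma W(r)
\eenn
on every compact $r$-interval. Tightness of the minimizer is already supplied by the Theorem~\ref{thm:optimalapprox} localization, and the almost-sure uniqueness of $\argmin_r\{|r|-2\sigma W(r)\}$ is a classical property of two-sided Brownian motion with linear drift, so a Kim--Pollard-type argmin continuous-mapping theorem yields $T\xi^2(\tilde\tau-\tau^0)\Rightarrow\argmin_r\{|r|-2\sigma W(r)\}$. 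Finally, a Brownian self-similarity change of variables $r=\sigma^2 r'$, using $W(\sigma^2 r')\stackrel{d}{=}\sigma W(r')$, rewrites $|r|-2\sigma W(r)=\sigma^2(|r'|-2W(r'))$ and hence gives $T\xi^2\sigma^{-2}(\tilde\tau-\tau^0)\Rightarrow \argmin_{r}\{|r|-2W(r)\}$ as claimed. I expect the principal technical obstacle to be the uniform control of $(\hat\eta-\eta^0)^TS_v$ on the diverging range $|v|\le M\xi^{-2}$: a pointwise tail bound on $\|S_v\|_\infty$, even after union-bounding over $v$, requires careful exploitation of the cone structure $\cA$ to fit within the budget afforded by (\ref{asm:jumpnew}).
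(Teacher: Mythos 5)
Your proposal follows essentially the same route as the paper's proof: localization via Theorem \ref{thm:optimalapprox}, the identical algebraic decomposition into drift $v\|\h\eta\|_2^4$, a cross term in $\h\eta^T(\h\mu_2-\mu_2^0)$, and noise $\h\eta^TS_v$ split as $\eta^{0T}S_v+(\h\eta-\eta^0)^TS_v$, with the first piece handled by an invariance principle for $\{\eta^{0T}\vep_t\}$ and the second by the cone restriction of Condition C plus a restricted-set maximal inequality (the paper's Lemma \ref{lem:unifb.restrictedset} and the bound (\ref{eq:in1})), finishing with the Argmax theorem and the Brownian rescaling. One small bookkeeping correction: since the correction term enters the normalized objective as $-2(\|\h\eta\|_2^2/\xi^2)(\h\eta-\eta^0)^TS_v$ with prefactor tending to $-2$, you only need $(\h\eta-\eta^0)^TS_v=o_p(1)$ uniformly (which is exactly what (\ref{asm:jumpnew}) delivers), not the stronger $o_p(\xi^2)$ you state, which would not follow from the assumed rates when $\xi\to 0$.
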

The assumption (\ref{asm:jumpnew}) is slightly stronger than that assumed in Condition A(iii) and is required to obtain the given limiting distribution. This requirement of a marginally stronger assumption in comparison to that required for $T$-consistency is inline with the classical literature, see, e.g. Condition (C) of \cite{bai1994}. The only condition here that may be considered out of the ordinary is $(\eta^{0T}\Sigma_{\vep}\eta^0\big)\big/\xi^2\to \si^2.$ However this is a mild requirement given that under Condition B we have  $\ka^2\xi^2\le(\eta^{0T}\Sigma_{\vep}\eta^0\big)\le \phi^2\xi^2.$ Note also that $(\eta^{0T}\Sigma_{\vep}\eta^0\big)$ is the expression for the variance of $\psi_t$ in the latent model (\ref{mod:projectedseries}).

The limiting distribution presented in Theorem \ref{thm:limitingdist} is classical and has been studied extensively in the literature, see, e.g. \cite{picard1985testing}, \cite{yao1987approximating}, \cite{bai1994}, \cite{bai2010common}, \cite{jandhyala2013inference} among several others. The form of the distribution function is available in \cite{yao1987approximating}. This enables straightforward computation of quantiles, which can in turn be utilized to construct confidence intervals with any desired asymptotic coverage level.

\section{Computationally efficient nuisance parameter estimation via regularization}\label{sec:nuisance}

The main objective of this section is to provide computationally efficient estimates $\h\mu_1,$ $\h\mu_2$ for the nuisance parameters $\mu_1^0$ and $\mu_2^0,$ that satisfy the requirements of Condition C, so that the proposed projected least squares estimator of the previous section is viable. We propose using regularized stopped time estimates, in particular, soft-thresholded empirical means computed on each binary partition yielded by a preliminary near optimal change point estimate. This preliminary change point estimate can be an existing one from the literature, such as that of \cite{wang2018high}, or the new near optimal estimator proposed later in this section. A comparison between these two distinct approaches is also provided later in this section.

We begin by defining soft-thresholded empirical means. For this purpose we require the following notation. For any $\tau\in(0,1),$ such that $\lfloor T\tau\rfloor\ge 1,$ let $\bar y_{(0:\tau]},$ and $\bar y_{(\tau:1]}$ represent the stopped time empirical means defined as,
\benr\label{def:empmeans}
\bar y_{(0:\tau]}=\frac{1}{\lfloor T\tau \rfloor}\sum_{t=1}^{\lfloor T\tau\rfloor} y_t,\quad{\rm and}\quad \bar y_{(\tau:1]}=\frac{1}{T-\lfloor T\tau\rfloor}\sum_{t=\lfloor T\tau\rfloor+1}^{T} y_t.
\eenr
Next consider the soft-thresholding operator, $k_{\la}(x)={\rm sign}(x)(|x|-\la)_{+},$ $\la>0,$ $x\in\R^p,$ where ${\rm sign}(\cdotp)$ and $|\cdotp|$ are applied component-wise. Then for any $\la_1,\la_2>0,$ define regularized mean estimates,
\benr\label{est:softthresh}
\h\mu_1(\tau)=k_{\la_1}\big(\bar y_{(0:\tau]}\big),\quad{\rm and}\quad \h\mu_2(\tau)=k_{\la_2}\big(\bar y_{(\tau:1]}\big),
\eenr
Clearly, these soft thresholded estimates are negligible in their computational complexity, all they require are two arithmetic operations, namely, computation of the empirical mean and the operator $k_{\la}(\cdot).$ In practice, the only significant computation time required here would be that of cross validation or other tuning parameter selection processes.
It is well known in the literature (\cite{donoho1995noising}, \cite{donoho1995wavelet}) that the soft-thresholding operation in (\ref{est:softthresh}) is equivalent to the following $\ell_1$ regularization.
\benr\label{est:softL1construction}
\h\mu_1(\tau)&=&\argmin_{\mu_1\in\R^p}\big\|\bar y_{(0:\tau]}-\mu_1\big\|^2_2+\la_1\|\mu_1\|_1,
\eenr
and similarly for $\h\mu_2(\tau).$ The following result provides a uniform bound on these soft-thresholded means that reduce the problem of obtaining $\h\mu_1$ and $\h\mu_2$ that satisfy Condition C, to obtaining a preliminary near optimal estimate of the change point.

\begin{thm}\label{thm:unifmean} Suppose Condition B holds and let $\tau^0\in(0,1].$ Then we have the following.  \\
	(i) When $\tau^0=1,$ let $\mu_2^0=\mu_1^0$ and $\la_1=\la_2=c_u\si_{\vep} \surd\big\{s\log (p\vee T)\big/ Tl_T\big\}.$ Then for all $\tau\in(0,1)$ with $\tau\wedge(1-\tau)\ge c_ul_T,$ we have $\big\{\h\mu_1(\tau)-\mu_1^0\big\}, \big\{\h\mu_2(\tau)-\mu_2^0\big\}\in\cA,$ and for $q=1,2,$
	\benr
	\sup_{\substack{\tau\in(0,1)\\ \tau\wedge(1-\tau)\ge c_ul_T}} \|\h\mu_1(\tau)-\mu_1^0\|_q\le c_u\si_{\vep}s^{\frac{1}{q}}\Big\{\frac{\log (p\vee T)}{Tl_T}\Big\}^{\frac{1}{2}},\nn
	\eenr
	with probability at least $1-c_{u1}\exp\{-c_{u2}\log (p\vee T)\}.$ \\
	(ii) When $\tau^0<1,$ let $u_T>0$ be any sequence, and $\xi_{\iny}>0$ be such that $\|\eta^0\|_{\iny}\le \xi_{\iny}.$ Additionally let  	
	\benr
	\la_1=\la_2=c_u\max\Big[\si_{\vep}\Big\{\frac{\log (p\vee T)}{Tl_T}\Big\}^{\frac{1}{2}},\,\frac{\xi_{\iny} u_T}{l_T}\Big].\nn
	\eenr
	Then for all $\tau\in \cG(u_T,0),$ with $\tau\wedge(1-\tau)\ge c_ul_T,$ we have $\big\{\h\mu_1(\tau)-\mu_1^0\big\}, \big\{\h\mu_2(\tau)-\mu_2^0\big\}\in\cA,$ and for $q=1,2,$ 	\benr
	\sup_{\substack{\tau\in\cG(u_T,0)\\ \tau\wedge(1-\tau)\ge c_ul_T}} \|\h\mu_1(\tau)-\mu_1^0\|_q\le c_us^{\frac{1}{q}}\max\Big[\si_{\vep}\Big\{\frac{\log (p\vee T)}{Tl_T}\Big\}^{\frac{1}{2}},\,\frac{\xi_{\iny} u_T}{l_T}\Big],\nn
	\eenr	
	with probability at least $1-c_{u1}\exp\{-c_{u2}\log (p\vee T)\}.$ The same uniform upper bounds also hold for $\|\h\mu_2(\tau)-\mu_2^0\|_q,$ $q=1,2.$
\end{thm}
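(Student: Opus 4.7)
The plan is to combine a uniform subgaussian concentration bound for the stopped-time empirical means with the standard deterministic analysis of the soft-thresholding operator. Writing $\bar y_{(0:\tau]} = m_1(\tau) + \bar\psi_{(0:\tau]}$ with $m_1(\tau) := E\bar y_{(0:\tau]}$, the bias $m_1(\tau)-\mu_1^0$ can be computed explicitly: in case (i) the identity $\mu_2^0=\mu_1^0$ forces $m_1(\tau)=\mu_1^0$ for every admissible $\tau$, so the bias vanishes. In case (ii), $m_1(\tau)=\mu_1^0$ whenever $\lfloor T\tau\rfloor\le \lfloor T\tau^0\rfloor$, and otherwise $m_1(\tau)-\mu_1^0 = -\{(\lfloor T\tau\rfloor-\lfloor T\tau^0\rfloor)/\lfloor T\tau\rfloor\}\eta^0$, whose sup-norm is at most $\xi_{\iny}u_T/l_T$ using $\tau\in\cG(u_T,0)$ together with $\tau\ge c_u l_T$. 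The same decomposition with an analogous bias bound applies to $\bar y_{(\tau:1]}$.

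The next step is to establish a uniform stochastic bound $\sup_\tau \|\bar\psi_{(0:\tau]}\|_{\iny}\le c_u \si_{\vep}\surd\{\log(p\vee T)/(Tl_T)\}$, where the supremum ranges over the at most $T$ distinct values of $\lfloor T\tau\rfloor$ satisfying $\tau\wedge(1-\tau)\ge c_u l_T$. For any fixed pair $(\tau,j)$, the coordinate $\bar\psi_{(0:\tau],j}$ is a sample mean of $\lfloor T\tau\rfloor\ge c_u T l_T$ i.i.d.\ $\si_{\vep}$-subgaussians, so a Hoeffding-type tail bound with rate proportional to $Tl_T/\si_{\vep}^2$ applies. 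A union bound over at most $pT$ such pairs delivers the claimed uniform deviation with probability at least $1-c_{u1}\exp\{-c_{u2}\log(p\vee T)\}$, and the identical reasoning handles $\bar\psi_{(\tau:1]}$.

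On the intersection of these high-probability events, the coordinate-wise analysis of the soft-thresholding operator is applied. The stated choice of $\la_1$ is designed so that $\la_1$ dominates $\|\bar y_{(0:\tau]}-\mu_1^0\|_{\iny}$, the latter being the sum of the stochastic fluctuation and the bias. For $j\in S^c$ one has $\mu_{1j}^0=0$ and $|\bar y_{(0:\tau],j}|\le \la_1$, so the closed form $\h\mu_{1j}(\tau)={\rm sign}(\bar y_{(0:\tau],j})(|\bar y_{(0:\tau],j}|-\la_1)_+$ forces $\h\mu_{1j}(\tau)=0$; hence $\{\h\mu_1(\tau)-\mu_1^0\}_{S^c}=0$, which trivially places the error in $\cA$. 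For $j\in S$, in the subcase $|\bar y_{(0:\tau],j}|\le \la_1$ we have $\h\mu_{1j}(\tau)=0$ and $|\mu_{1j}^0|\le |\bar y_{(0:\tau],j}|+|\bar y_{(0:\tau],j}-\mu_{1j}^0|\le 2\la_1$, while in the subcase $|\bar y_{(0:\tau],j}|>\la_1$ we have $|\h\mu_{1j}(\tau)-\mu_{1j}^0|\le |\bar y_{(0:\tau],j}-\mu_{1j}^0|+\la_1\le 2\la_1$. Either way $|\{\h\mu_1(\tau)-\mu_1^0\}_j|\le 2\la_1$ for $j\in S$, and aggregating over $S$ before extracting $\ell_q$ norms yields $\|\h\mu_1(\tau)-\mu_1^0\|_q\le c_u s^{1/q}\la_1$, which collapses to the stated bound after substituting the specified $\la_1$.

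The only genuinely delicate point, and hence the main obstacle, is the simultaneous control of two distinct error sources in case (ii), namely the subgaussian fluctuation and the mean-contamination bias $\xi_{\iny}u_T/l_T$ coming from a misplaced $\tau$. The $\max\{\cdot,\cdot\}$ form of $\la$ is chosen precisely so that $\la$ dominates both contributions simultaneously and uniformly over $\tau\in\cG(u_T,0)$. The parallel conclusion for $\h\mu_2(\tau)$ follows by mirror arguments applied to $\bar y_{(\tau:1]}$ with the symmetric bias calculation, and combining the two halves inflates the failure probability only by a constant factor, preserving the stated exponential bound.
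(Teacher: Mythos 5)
Your proof is correct, but the deterministic half takes a genuinely different route from the paper's. For the stochastic ingredient you and the paper do essentially the same thing: decompose $\bar y_{(0:\tau]}-\mu_1^0$ into a bias term (zero in case (i); of sup-norm at most $c_u\xi_{\iny}u_T/l_T$ on $\cG(u_T,0)$ in case (ii)) plus a subgaussian average, and control the latter uniformly in $\ell_\iny$ by a union bound — this is exactly the paper's Lemma B.5, and your explicit union over the $pT$ pairs $(\lfloor T\tau\rfloor, j)$ is if anything cleaner than the paper's treatment of uniformity in $\tau$. Where you diverge is in passing from the $\ell_\iny$ deviation bound to the error bounds on $\h\mu_1(\tau)$: you analyze the soft-thresholding map coordinatewise, getting $\h\mu_{1j}(\tau)=0$ exactly for $j\in S^c$ (so the error is supported on $S$ and membership in $\cA$ is trivial, with constant $0$) and $|\h\mu_{1j}(\tau)-\mu^0_{1j}|\le 2\la_1$ for $j\in S$, whence $\|\h\mu_1(\tau)-\mu_1^0\|_q\le 2s^{1/q}\la_1$. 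The paper instead invokes the equivalent $\ell_1$-penalized least-squares characterization and runs the standard LASSO basic-inequality argument to obtain the cone condition and the $\ell_2$/$\ell_1$ bounds; the authors explicitly note that the direct soft-thresholding route exists and deliberately choose the penalization route as "more illustrative." Your route buys sharper, fully explicit constants and exact support containment; theirs generalizes verbatim to regularized estimators without closed forms. Two minor remarks: (a) your final bound $c_us^{1/q}\la_1$ reproduces the stated rates provided $\la_1$ is taken at the order $c_u\si_{\vep}\{\log(p\vee T)/(Tl_T)\}^{1/2}$ — note the theorem's displayed $\la_1$ in case (i) carries an extra $\surd s$ that the paper's own proof does not use, an internal inconsistency of the statement rather than a defect of your argument; (b) strictly one needs $\la_1\ge\la$ (dominating the $\ell_\iny$ deviation) for the $S^c$ exact-zero step and the $2\la_1$ bound on $S$, which your choice satisfies.
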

The result of Theorem \ref{thm:unifmean} provides validity of these regularized stopped time mean estimates. Its usefulness is quite apparent. Consider any preliminary near optimal estimator $\h\tau,$ that satisfies,
\benr\label{est:nearoptratecp}
\big|\lfloor T\h\tau\rfloor-\lfloor T\tau^0\rfloor\big|\le c_u\si_{\vep}^2\xi^{-2} s\log (p\vee T),
\eenr
with probability at least $1-o(1),$ i.e., $\h\tau\in \cG(u_T,0),$ with  $u_T=c_u\si_{\vep}^2\xi^{-2} s\log (p\vee T)\big/T,$ with the same probability. Then, under the assumption $\xi^{-1}\surd(s\log p)\big/\surd (Tl_T)= o(1),$ the result of Theorem \ref{thm:unifmean} ensures that the mean estimates $\h\mu_1(\h\tau),$ and $\h\mu_2(\h\tau)$ satisfy all requirements of Condition C. Consequently, these requirements are now reduced to the availability of a preliminary near optimal estimator of the change point satisfying (\ref{est:nearoptratecp}). One example of such an estimator of the change point is that obtained from Algorithm 3 of \cite{wang2018high}. This is stated precisely in the following corollary along with the required assumptions from their article.

\begin{cor}\label{cor:meanws} Suppose the model (\ref{model:subgseries}), and assume $\vep_t\sim^{i.i.d.} \cN(0,\si_{\vep}^2I_{p\times p}),$ $t=1,...,T.$ Let $\tau^0\wedge(1-\tau^0)\ge l_T,$ and $\|\mu_1^0-\mu_2^0\|_2\ge \xi,$ and assume that,
	\benr\label{asm:wsasm}
	\frac{\si_{\vep}}{\xi l_T}\Big\{\frac{s\log \big(p \log T\big)}{T}\Big\}^{\frac{1}{2}}= o(1).
	\eenr
	Let $\h\tau$ be the estimate obtained from Algorithm 3 of \cite{wang2018high}. Then upon choosing $\la_1$ and $\la_2$ as prescribed in Theorem \ref{thm:unifmean}, with $u_T=\si^{2}\xi^{-2} \log (\log T)\big/ T,$ the soft thresholded estimates $\h\mu_1=\h\mu_1(\h\tau),$ and $\h\mu_2(\h\tau)$ satisfy the requirements of Condition C. \footnote{There are a few additional minor requirements for the validity of $\h\tau,$ of \cite{wang2018high} which appear to be artifacts of their proof, we refer to Theorem 1 of \cite{wang2018high} for further details.}.
\end{cor}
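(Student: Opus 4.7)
The plan is to combine two off-the-shelf ingredients: the near optimal rate of $\hat\tau$ obtained from Algorithm 3 of \cite{wang2018high}, and the uniform mean estimation bound of Theorem \ref{thm:unifmean}(ii). The first gives an event on which $\hat\tau$ falls in a suitable set $\cG(u_T,0)$, and the second, evaluated at this random $\hat\tau$, delivers the mean estimation rate demanded by Condition C. The only genuine bookkeeping is verifying that, with the prescribed $u_T$, the maximum appearing in the bound of Theorem \ref{thm:unifmean}(ii) collapses to the purely statistical term $\sigma_\vep\surd\{\log(p\vee T)/(Tl_T)\}$.

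First I would record that assumption (\ref{asm:wsasm}) is precisely the signal-to-noise condition (17) of \cite{wang2018high}, so under Gaussianity their Theorem~1 is applicable and yields an estimator $\hat\tau$ satisfying
\benn
\big|\lfloor T\hat\tau\rfloor-\lfloor T\tau^0\rfloor\big|\le c_u\si_{\vep}^2\xi^{-2}\log(\log T)
\eenn
on an event of probability at least $1-o(1)$. With $u_T=c_u\si^2\xi^{-2}\log(\log T)/T$ this places $\hat\tau\in\cG(u_T,0)$, and by Condition A(iii) we have $\hat\tau\wedge(1-\hat\tau)\ge c_u l_T$ on the same event up to an $o(1)$ modification.

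Next I would invoke Theorem \ref{thm:unifmean}(ii) with this choice of $u_T$, noting that the prescribed $\la_1,\la_2$ in the corollary match those in the theorem, and that $\xi_\iny\le\xi$ because $\|\eta^0\|_\iny\le\|\eta^0\|_2$. Evaluating the resulting uniform bound at the random point $\tau=\hat\tau$ gives, for $q=1,2$,
\benn
\|\h\mu_q(\h\tau)-\mu_q^0\|_q\le c_u s^{1/q}\max\Big[\si_{\vep}\Big\{\frac{\log(p\vee T)}{Tl_T}\Big\}^{1/2},\frac{\xi\,u_T}{l_T}\Big],
\eenn
along with $\{\h\mu_q(\h\tau)-\mu_q^0\}\in\cA$. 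The remaining task is to confirm that $\xi u_T/l_T$ is dominated by the first argument, i.e., that
\benn
\frac{\si_{\vep}\log(\log T)}{\xi\surd{T l_T}}\le c_u\surd{\log(p\vee T)}.
\eenn
This follows from (\ref{asm:wsasm}), since that assumption already forces $\si_{\vep}/(\xi l_T\surd T)=o(1)$, hence $\si_{\vep}/(\xi\surd{Tl_T})=\surd{l_T}\cdot\si_{\vep}/(\xi l_T\surd T)=o(1)$, while $\log(\log T)=o\{\surd{\log(p\vee T)}\}$ under any regime permitted by Condition A.

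A final union bound over the Wang--Samworth event and the event in Theorem \ref{thm:unifmean}(ii) gives the overall failure probability $o(1)+c_{u1}\exp\{-c_{u2}\log(p\vee T)\}$, which we can absorb into a suitable sequence $\Delta_T\to 0$ so that Condition C holds verbatim. The main obstacle is really the last algebraic verification: one has to be careful to separate the factor $\surd{l_T}$ from the factor $1/(\xi l_T\surd T)$ supplied by (\ref{asm:wsasm}), so that the $\log(\log T)$ blow-up of the Wang--Samworth rate is tamed by the $\surd{\log(p\vee T)}$ appearing in the statistical rate; everything else is a direct citation of results already stated.
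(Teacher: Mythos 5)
Your proposal is correct and follows essentially the same route as the paper's own proof: cite Theorem 1 of \cite{wang2018high} to place $\h\tau$ in $\cG(u_T,0)$ with $u_T=c_u\si_{\vep}^2\xi^{-2}\log(\log T)/T$ and to get $\h\tau\wedge(1-\h\tau)\ge c_u l_T$, then apply the uniform bound of Theorem \ref{thm:unifmean}(ii) at the random point $\h\tau$ and check via (\ref{asm:wsasm}) that the second argument of the max is dominated by $\si_{\vep}\surd\{\log(p\vee T)/(Tl_T)\}$. The only cosmetic difference is that you replace $\xi_{\iny}$ by $\xi$ at the outset while the paper carries $\xi_{\iny}$ through to the final display; your explicit algebraic verification of the domination step is, if anything, slightly more detailed than the paper's.
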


\begin{rem} It may be observed that the assumption (\ref{asm:wsasm}) is slightly stronger than the assumption $\si_{\vep}(\xi l_T)^{-1} \surd\big[\{s\log \big(p \log T\big)\}\big/ T ]\le c_u,$ of \cite{wang2018high}. While the latter is sufficient to obtain a near optimal rate of convergence of the change point estimate $\h\tau,$ we require the marginally stronger version (\ref{asm:wsasm}) to allow near optimality of the change point estimate to extend to the mean estimates $\h\mu_1$ and $\h\mu_2.$ Here we also mention that article of \cite{wang2018high} also extends their result to some spatial and temporal dependence structures.
\end{rem}

The availability of estimators $\h\mu_1$ and $\h\mu_2$ of Corollary \ref{cor:meanws} satisfying the requirements of Condition C makes the inference methodology of Section \ref{sec:mainresults} viable, and allows us to achieve the larger objective of this article, i.e., to perform inference on the location of the change point $\tau^{0}$ of the high dimensional model (\ref{model:subgseries}), while allowing the change point to be potentially near the boundary of the ROD. However, there still remains a significant room for improvement in several aspects of the nuisance parameter estimation methodology. For this purpose we propose an alternative near optimal estimator for the change point parameter and consequently alternative mean estimates $\h\mu_1$ and $\h\mu_2.$ This is discussed in the following.

In the remainder of this article we provide an alternative near optimal nuisance estimation methodology. The method to follow provides the following advantages in comparison to the existing literature. (a) It is applicable under a subgaussian assumption and allows for any general positive definite spatial dependence structure. (b) It is applicable even if $\tau^0=1,$ and infact provides consistent detection of this `no change' case, thus eliminating the need to pretest for existence of a change. Finally, (c) it is highly scalable and thus applicable to very large data sets. The cost associated with gaining these advantages shall only be a marginally stronger restriction on the sparsity parameter $s,$ and the separation from boundary sequence $l_T.$

A few more notations are necessary to describe our approach. Define the $p$-dimensional generalization of the least squares loss $Q$ of (\ref{eq:Q}), i.e., for any  $y_t\in\R^p,$ let $y=(y_1,...,y_T)^T\in\R^{T\times p},$ and for any vectors $\mu_1,\mu_2,\in\R^p,$ and any $\tau\in(0,1]$ define,
\benr\label{def:Qpdim}
Q(y,\tau,\mu_1,\mu_2)=\frac{1}{T}\sum_{t=1}^{\lfloor T\tau\rfloor}\big\|y_t-\mu_1\big\|_2^2+\frac{1}{T}\sum_{t=\lfloor T\tau\rfloor+1}^{T}\big\|y_t-\mu_2\big\|_2^2,
\eenr
where the second term in the rhs of (\ref{def:Qpdim}) is defined to be zero at $\tau=1.$ Also define a modified $\ell_0$-norm on $(0,1],$ as $\|\tau\|_0^*=0,$ if $\tau=1$ and $\|\tau\|_0^*=1,$ if $\tau<1.$ Then we propose Algorithm 1 to obtain a computationally efficient near optimal estimate of the change point parameter.

\vspace{-2mm}
\begin{figure}[H]
	\noi\rule{\textwidth}{0.5pt}
	
	\vspace{-1mm}
	\noi {\bf Algorithm 1:} Detection and near optimal estimation of the change point parameter
	
	\vspace{-3mm}
	\noi\rule{\textwidth}{0.5pt}
	
	\noi{\bf Step 0 (Initialize):} Choose any value $\check\tau\in(0,1),$ satisfying Condition D below, and compute mean estimates $\check\mu_1=\h\mu_1(\check\tau),$ and $\check\mu_2=\h\mu_2(\check\tau)$ using soft-thresholding, as defined in (\ref{est:softthresh}).
	
	\noi{\bf Step 1:} Update $\check\tau$ to obtain the change point estimate $\h\tau$ where,
	\benr
	\h\tau=\argmin_{\tau\in (0,1]}\Big\{Q(y,\tau,\check\mu_1,\check\mu_2)+\gamma \|\tau\|_0^*\Big\},\quad \gamma>0.\hspace{0.75in}\nn
	\eenr
	
	\vspace{-2mm}
	\noi\rule{\textwidth}{0.5pt}
\end{figure}
To complete the description of Algorithm 1, we first provide Condition D, which is a mild initializing condition of Step 0, and is satisfied by nearly any arbitrarily chosen $\check\tau\in(0,1),$ that is marginally away from the boundaries of this set.

\vspace{1.5mm}
{\noi {\bf Condition D}}:  Let $\check u_T$ be any non-negative sequence defined as,
\benr
\check u_T=1\wedge c_u\Big(\frac{1}{T}\Big)^{\frac{1}{k}},\quad {\rm for\,\, any\,\, constants,}\,\,k\in[1,\iny),\,\,{\rm and}\,\,c_u>0.\nn
\eenr
Then assume that the initializer $\check\tau$ satisfies,
\benr
\tau \vee (1-\tau)\ge c_ul_T,\quad{\rm and},\quad |\tau-\tau^0|\le \check u_T,\nn
\eenr
where $l_T$ is any sequence satisfying the rate assumptions of Condition A1.

A detailed discussion illustrating the mildness of this condition has been provided in Appendix D of the supplementary materials. Additionally, a brief summary of Condition D is provided right after the following inter-related condition that is required for the theoretical validity of Algorithm 1. The condition to follow is a weaker version of Condition A of Section \ref{sec:mainresults} in terms of $\xi$ and $p,$ and requires marginally stronger restrictions on the sparsity parameter $s$ and the sequence $l_T.$

\vspace{1.5mm}
{\it {\noi\bf Condition A1:} Suppose condition A(i), additionally assume the following. Let $\tau^0\in(0,1],$ and assume that if a change point exists, i.e., when $\tau^0<1,$ then $(\tau^0)\wedge (1-\tau^0)\ge l_T,$ for the same $l_T$ as of Condition D\footnote{For notational simplicity we assume $l_T$ to be the same sequence in both Condition A1 and D. This can be instead relaxed to only assuming the same order of these sequences.}. Additionally let $\|\eta^0\|_2\ge \xi,$ and $\|\eta^0\|_{\iny}\le \xi_{\iny}$ for any positive sequences $\xi$ and $\xi_{\iny}.$ Furthermore, assume that these sequences satisfy the following rate conditions,
	\benr
	(i)\,\,\frac{\si_{\vep}}{\xi}\Big\{\frac{s\log (p\vee T)}{Tl_T}\Big\}^{\frac{1}{2}}=o(1),\quad{and}\quad (ii)\,\,\frac{\surd{s}\xi_{\iny}}{\xi l_T T^{\frac{1}{k}}}\le c_{u}\nn
	\eenr
	for appropriately chosen small enough constant $c_{u}>0,$ where $k$ is the constant specified in Condition D.}

We begin by emphasizing the mildness of the initializing Condition D and that nearly any user chosen $\check\tau$ will satisfy it. The first part of this requirement only assumes that the initial choice $\check\tau$ is marginally away from the boundaries of $(0,1),$ and is clearly innocuous. For the second part of this condition, the key is to note that the constant $k$ may be arbitrarily large subject to $k$ satisfying the rate restriction in Condition A1. The usefulness of this flexibility is that $k$ can itself depend on the initial user chosen $\check\tau,$ i.e., the farther the initial guess $\check \tau,$ the larger $k$ can be in order to satisfy Condition D. We shall show in the following that the rate of convergence of the estimate $\h\tau$ obtained from Step 1 of Algorithm 1, shall be free of $k.$ This implies that the rate of convergence of $\h\tau$ does not depend on the precision of the user chosen initializer. Following is a simplified example that clearly illustrates the mildness of Condition D. First note that the restriction (ii) of Condition A1 can be simplified to $\surd{s}\big/\big(l_T T^{1/k}\big)\le c_{u}.$ Consider the case where $s\le c_u\log T,$ and $l_T\ge 1\big/c_u\log T.$ Now choose any $0<c_1<0.5,$ then any $\check\tau\in(c_1,1-c_1)$ will satisfy Condition D for some large enough constant $k>0,$ furthermore any such constant $k$ will in turn satisfy the rate condition of Condition A1 for $T$ sufficiently large, and thus will be a theoretically valid choice for the initializer of Algorithm 1.

Simply stated, this roughly implies that Algorithm 1 initialized with any user chosen $\check\tau\in(0,1)$ shall yield an estimate $\h\tau$ that lies in a near optimal neighborhood of $\tau^0.$ The restriction (ii) of Condition A1 also brings out the following closely related subtle observation. Suppose $\surd{s}\xi_{\iny}\le c_u\xi,$ then (ii) of Condition A1 becomes free of the sparsity parameter $s,$ consequently allowing the user chosen $\check \tau$ to be no longer restricted by the sparsity $s.$ This points to an interesting observation that the proposed Algorithm 1 can allow a larger number of changes when these jumps are evenly spread out across $s$ components of the jump vector $\eta,$ as opposed to unevenly large jumps in a few of these $s$ components. Nevertheless, Condition A1 allows the jump size to reach to the boundary of the ROD, upto the separation sequence $l_T$ and logarithmic terms in $s$ and $T.$ Additionally, this condition allows for the `no change' case, i.e., $\tau^0=1,$ which was absent from Condition A. We can now state the following result which provides the theoretical validity of the estimate $\h\tau$ of Algorithm 1.

\begin{thm}\label{thm:alg1cp} Suppose Conditions A1, B and D hold and choose $\la_1,\la_2$ as prescribed in Theorem \ref{thm:unifmean} with $u_T=\check u_T$ for Step 0, and $\gamma=c_u\si_{\vep}\xi \surd \{s\log (p\vee T)\big/T\}$ for Step 1. Then the estimate $\h\tau$ of Algorithm 1 satisfies the following relations.\\
	(i) When $\tau^0=1,$ then $\h\tau=1,$ with probability at least $1-c_{u1}\exp\{-c_{u2}\log (p\vee T)\}.$\\
	(ii) When $\tau^0<1,$ then,
	\benr
	\big|\lfloor T\h\tau\rfloor-\lfloor T\tau^0\rfloor\big|\le c_u\si_{\vep}^2\xi^{-2}s\log (p\vee T),\nn
	\eenr
	with probability at least $1-c_{u1}\exp\{-c_{u2}\log (p\vee T)\}.$
\end{thm}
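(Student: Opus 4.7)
The plan is to handle the two cases of the theorem separately, both driven by the soft-thresholded mean estimate bounds of Theorem \ref{thm:unifmean} (applicable to $\check\mu_1,\check\mu_2$ since Condition D places $\check\tau\in\cG(\check u_T,0)$ with $\check\tau\wedge(1-\check\tau)\ge c_u l_T$). I denote $\check\eta=\check\mu_2-\check\mu_1$ and $\bar{\check\mu}=(\check\mu_1+\check\mu_2)/2$. The central algebraic identity, valid for any $\tau^*\le\tau$, is
\benn
Q(y,\tau,\check\mu_1,\check\mu_2)-Q(y,\tau^*,\check\mu_1,\check\mu_2)=\frac{2}{T}\sum_{t=\lfloor T\tau^*\rfloor+1}^{\lfloor T\tau\rfloor}\big(\bar{\check\mu}-y_t\big)^T\check\eta,
\eenn
which reduces every comparison of the penalized loss at $\tau$ versus $\tau^*$ to controlling a linear functional in $\{\vep_t\}$ plus a deterministic cross-term linear in $\check\eta$.

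For part (i) with $\tau^0=1$, Theorem \ref{thm:unifmean}(i) yields $\|\check\mu_q-\mu_1^0\|_1=O(\si_\vep s\surd\{\log(p\vee T)/(Tl_T)\})$ and $\|\check\mu_q-\mu_1^0\|_2=O(\si_\vep\surd\{s\log(p\vee T)/(Tl_T)\})$ for $q=1,2$. With $y_t=\mu_1^0+\vep_t$ for every $t$, the deterministic part $(\bar{\check\mu}-\mu_1^0)^T\check\eta$ of the above identity is bounded via Cauchy--Schwarz by $\|\bar{\check\mu}-\mu_1^0\|_2\|\check\eta\|_2=O(\si_\vep^2 s\log(p\vee T)/(Tl_T))$, while the stochastic part $T^{-1}|\sum_{t=\lfloor T\tau\rfloor+1}^T\vep_t^T\check\eta|$ is uniformly bounded in $\tau$ via the dual inequality $\|\check\eta\|_1\cdot\sup_k\|T^{-1}\sum_{t=k+1}^T\vep_t\|_\iny$ and the subgaussian maximal inequality. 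Both are strictly smaller than $\gamma=c_u\si_\vep\xi\surd\{s\log(p\vee T)/T\}$ under Condition A1(i), so for every $\tau<1$ the penalized loss exceeds that at $\tau=1$, giving $\hat\tau=1$.

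For part (ii) with $\tau^0<1$, Theorem \ref{thm:unifmean}(ii) applied with $u_T=\check u_T$ yields $\|\check\mu_q-\mu_q^0\|_q=O(s^{1/q} r_T)$ where $r_T=\max[\si_\vep\surd\{\log(p\vee T)/(Tl_T)\},\,\xi_\iny\check u_T/l_T]$, and Condition A1(ii) together with $\check u_T\le c_uT^{-1/k}$ keeps $\xi_\iny\check u_T/l_T$ small enough to be absorbed. Set $\cU_p(\tau):=Q(y,\tau,\check\mu_1,\check\mu_2)-Q(y,\tau^0,\check\mu_1,\check\mu_2)$. For $\tau>\tau^0$ the identity above (with $\tau^*=\tau^0$) together with the decompositions $\bar{\check\mu}-\mu_2^0=\eta^0/2+(\check\mu_1-\mu_1^0)/2+(\check\mu_2-\mu_2^0)/2$ and $\check\eta=\eta^0+(\check\mu_1-\mu_1^0)-(\check\mu_2-\mu_2^0)$ extract a leading signal $(\xi^2/2)(1+o(1))$ from $(\bar{\check\mu}-\mu_2^0)^T\check\eta$, with Condition A1(i) used to dominate all bias cross-terms. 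The stochastic counterpart $T^{-1}|\sum_{t=\lfloor T\tau^0\rfloor+1}^{\lfloor T\tau\rfloor}\vep_t^T\check\eta|$ is again handled uniformly in $\tau$ by $\|\check\eta\|_1$ times the subgaussian maximum over the $T$ breakpoints. Together this yields the $p$-dimensional analogue of Lemma \ref{lem:mainlowerb},
\benn
\inf_{\tau\in\cG(u_T,v_T)}\cU_p(\tau)\ge c_u\xi^2\Big\{v_T-c_{u1}\si_\vep\xi^{-1}(u_T/T)^{1/2}\Big\}.
\eenn
Choosing $u_T=1$ and $v_T=c\si_\vep^2\xi^{-2}s\log(p\vee T)/T$ with $c$ large, the right-hand side exceeds $\gamma$ on $\cG(1,v_T)$. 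Since $\hat\tau$ minimizes the penalized criterion, optimality yields $\cU_p(\hat\tau)\le\gamma\{\mathbf{1}(\tau^0<1)-\mathbf{1}(\hat\tau<1)\}\le\gamma$, which rules out $\hat\tau\in\cG(1,v_T)$; the boundary case $\hat\tau=1$ is ruled out separately since $\cU_p(1)\ge c_u\xi^2 l_T$ (same signal extraction at $\tau=1$) strictly exceeds $\gamma$ under Condition A1(i).

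The main obstacle will be the uniform control of the empirical process $\sup_{\tau}T^{-1}|\sum_{t=\lfloor T\tau^0\rfloor+1}^{\lfloor T\tau\rfloor}\vep_t^T\check\eta|$ while $\check\eta$ is itself random and correlated with the $\vep_t$'s. The inclusion $(\check\mu_q-\mu_q^0)\in\cA$ guaranteed by Theorem \ref{thm:unifmean} is crucial here: it restricts $\check\eta-\eta^0$ to the convex cone $\cA$, so the process can be bounded by $\|\check\eta\|_1$ times a coordinate-wise $\ell_\iny$ concentration and a maximal inequality over the $T$ breakpoints $k=\lfloor T\tau\rfloor$. Secondarily, the deterministic bias terms $\eta^{0T}(\check\mu_q-\mu_q^0)$ and $(\check\mu_1-\mu_1^0)^T(\check\mu_2-\mu_2^0)$ must be pinned down at the rate $o(\xi^2)$ via Condition A1(i) and the $r_T$ bound of Theorem \ref{thm:unifmean}(ii); this is the delicate accounting that binds the rate restrictions of Condition A1 together and ultimately dictates the $\si_\vep^2\xi^{-2}s\log(p\vee T)$ rate of convergence.
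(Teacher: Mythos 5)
Part (i) of your proposal is essentially the paper's argument and is fine: a uniform lower bound on the penalized loss difference over $\tau\in(0,1)$ of order $\gamma-c_u\si_\vep^2 s\log(p\vee T)/(Tl_T)$, which is positive under Condition A1(i), forces $\h\tau=1$. Your overall strategy for part (ii) — a uniform lower bound on $\cU_p$ over annuli $\cG(u_T,v_T)$, combined with the basic inequality $\cU_p(\h\tau)\le\gamma$ — is also the paper's strategy (its Lemma \ref{lem:lowerbpdim}).

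The gap is in how you close part (ii). You apply the lower bound once, with $u_T=1$ and $v_T=c\,\si_\vep^2\xi^{-2}s\log(p\vee T)/T$, and assert that the right-hand side then exceeds $\gamma$. It does not. At $u_T=1$ the negative term in your own bound is $c_{u1}\si_\vep\xi^{-1}T^{-1/2}$ (and, with the empirical-process bound you actually describe, namely $\|\check\eta\|_1\le c_u\xi\surd s$ times an $\ell_\iny$ maximal inequality, it is the larger quantity $c_{u1}\si_\vep\xi^{-1}\surd\{s\log(p\vee T)/T\}$, as in the paper's Lemma \ref{lem:lowerbpdim}). Your chosen $v_T$ satisfies $v_T\big/\big[\si_\vep\xi^{-1}\surd\{s\log(p\vee T)/T\}\big]=c\,\si_\vep\xi^{-1}\surd\{s\log(p\vee T)/T\}=o(1)$ under Condition A1(i), so $v_T$ is dominated by the noise term and the infimum cannot be shown positive; a single pass with $u_T=1$ only localizes $\h\tau$ to $|\lfloor T\h\tau\rfloor-\lfloor T\tau^0\rfloor|\le c_uT(\si_\vep/\xi)\surd\{s\log(p\vee T)/T\}$, which is far weaker than the claimed $\si_\vep^2\xi^{-2}s\log(p\vee T)$. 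The missing idea is the recursive (peeling) step: feed the localization obtained at stage $m$ back in as the new $u_T$, so that the negative term shrinks at each stage; the paper iterates this, obtaining exponents $a_m=\sum_{j\le m}2^{-j}\to1$ and $b_m=\sum_{j<m}2^{-j}\to2$ and hence the rate $T(\si_\vep/\xi)^{2}\{s\log(p\vee T)/T\}$, together with a monotonicity-of-events argument to keep the probability fixed across the infinitely many recursions. Without this recursion (or an equivalent slicing/peeling device) your argument proves a strictly weaker rate than the theorem states. A secondary, smaller issue: your justification that $\h\tau=1$ is excluded rests on $\xi^2 l_T\gg\gamma$, which needs $\si_\vep\xi^{-1}\surd\{s\log(p\vee T)/(Tl_T)\}=o(\surd{l_T})$, slightly more than Condition A1(i) gives; the paper instead absorbs the penalty through the term $\mu\,F(u_T)/\xi^2$ inside the recursive lower bound, where it is of the same order as the first-stage noise term and therefore harmless.
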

This result provides the applicability of Algorithm 1, without prior knowledge on the existence of a change. In the case of `no change', $\h\tau$ of Algorithm 1 provides consistent detection of this case. In the case where a change exists, it yields an estimate that lies in a near optimal neighborhood of the unknown change point. Importantly, the selection consistency and the rate of convergence of $\h\tau,$ are free of the constant $k$ of Condition D. Perhaps surprisingly, this implies that the estimate $\h\tau$ of Step 1 of Algorithm 1 is not influenced by the precision of the initial guess $\check \tau.$ Roughly speaking, this result is counterintuitive in the sense that it says a `bad' initial guess in Step 0, will yield an estimate that is no worse in its rate of convergence than that obtained by starting the algorithm even at the true value $\tau^0.$ An illustration of this surprising result is provided in Figure \ref{fig:insensitivity1} in Appendix D of the supplementary materials..

The usefulness of Theorem \ref{thm:alg1cp} in context of the inference problem of Section \ref{sec:mainresults} are the following. (a) If $\tau^0=1,$ then we will consistently recover $\h\tau=1,$ consequently, there is no further need to proceed to the inference methodology of Section \ref{sec:mainresults}. (b) If  $\tau^0<1,$ then $\h\tau$ provides a preliminary near optimal estimate $\h\tau,$ which can in turn be utilized to obtain the desired nuisance estimates $\h\mu_1=\h\mu_1(\h\tau),$ and $\h\mu_2=\h\mu_2(\h\tau)$ satisfying Condition C, thus making the methodology of Section \ref{sec:mainresults} viable. More specifically, for case (b), we have the following corollary which is a direct application of Theorem \ref{thm:unifmean} and Theorem \ref{thm:alg1cp}.

\begin{cor}\label{cor:final} Suppose the conditions of Theorem \ref{thm:alg1cp} and assume that a change point exists, i.e., $\tau^0<1.$ Let $\h\tau$ be the estimate obtained from Algorithm 1 and $\h\mu_1=\h\mu_1(\h\tau),$ and $\h\mu_2=\h\mu_2(\h\tau),$ be the corresponding regularized stopped time mean estimates. Then upon choosing $\la_1,$ and $\la_2$ as prescribed in Theorem \ref{thm:unifmean} with $u_T= c_u\si_{\vep}^2\xi^{-2}s\log (p\vee T)\big/T$ we have that $\h\mu_1,$ $\h\mu_2\in\cA,$ for $\cA$ as defined in Condition C. Additionally upon assuming,
	\benr\label{asm:additional}
	\frac{s\si_{\vep}\xi_{\iny}}{\xi^2}\Big\{\frac{\log (p\vee T)}{Tl_T}\Big\}^{\frac{1}{2}}\le c_u,
	\eenr
	the following bounds hold for $q=1,2,$
	\benr
	\|\h\mu_1-\mu_1^0\|_q\le c_u\si_{\vep}s^{\frac{1}{q}}\Big\{\frac{\log (p\vee T)}{Tl_T}\Big\}^{\frac{1}{2}},\,\,\,{and}\,\,\,\|\h\mu_2-\mu_2^0\|_q\le c_u\si_{\vep}s^{\frac{1}{q}}\Big\{\frac{\log (p\vee T)}{Tl_T}\Big\}^{\frac{1}{2}}\nn
	\eenr
	with probability at least $1-c_{u1}\exp\{-c_{u2}\log (p\vee T)\}.$
\end{cor}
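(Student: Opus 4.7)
The plan is to compose Theorem \ref{thm:alg1cp}(ii) with the uniform bound of Theorem \ref{thm:unifmean}(ii) by treating $\h\tau$ as a random stopping time that, on a high-probability event, lies in the set over which the mean bound is taken uniformly. The argument has three stages: localise $\h\tau$ in $\cG(u_T,0);$ transfer boundary separation from $\tau^0$ to $\h\tau;$ and simplify the $\max$ appearing in the resulting error bound by means of the extra hypothesis \eqref{asm:additional}.

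For the localisation, Theorem \ref{thm:alg1cp}(ii) directly yields $\big|\lfloor T\h\tau\rfloor-\lfloor T\tau^0\rfloor\big|\le c_u\si_{\vep}^2\xi^{-2}s\log (p\vee T)=Tu_T$ on an event $\cE_1$ with $pr(\cE_1)\ge 1-c_{u1}\exp\{-c_{u2}\log(p\vee T)\},$ so $\h\tau\in\cG(u_T,0).$ For the boundary condition, squaring Condition A1(i) gives $u_T/l_T=c_u(\si_{\vep}/\xi)^2\{s\log(p\vee T)/(Tl_T)\}=o(1);$ combined with $\tau^0\wedge(1-\tau^0)\ge l_T,$ this forces $\h\tau\wedge(1-\h\tau)\ge c_ul_T$ on $\cE_1$ for $T$ sufficiently large (with a slightly smaller absolute constant). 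Thus on $\cE_1$ the random argument $\h\tau$ satisfies, deterministically, both hypotheses under which the uniform supremum of Theorem \ref{thm:unifmean}(ii) is taken.

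Substituting $\tau=\h\tau$ into that uniform bound on a second event $\cE_2$ of the same exponential type, one obtains $(\h\mu_q-\mu_q^0)\in\cA$ for $q=1,2,$ together with
\benn
\|\h\mu_q-\mu_q^0\|_q\le c_us^{1/q}\max\Big[\si_{\vep}\{\log(p\vee T)/(Tl_T)\}^{1/2},\;\xi_{\iny} u_T/l_T\Big].
\eenn
Plugging in the explicit $u_T=c_u\si_{\vep}^2\xi^{-2}s\log(p\vee T)/T,$ the inequality $\xi_{\iny} u_T/l_T\le \si_{\vep}\{\log(p\vee T)/(Tl_T)\}^{1/2}$ is equivalent, after routine rearrangement, to $s\si_{\vep}\xi_{\iny}\xi^{-2}\{\log(p\vee T)/(Tl_T)\}^{1/2}\le c_u,$ which is exactly the added assumption \eqref{asm:additional}. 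Hence the first term in the $\max$ dominates, the stated bounds for both $q=1$ and $q=2$ follow, and a union bound over $\cE_1\cap\cE_2$ preserves the exponential probability guarantee. I do not expect a genuine obstacle here; the only moderately delicate point is the transfer of boundary separation from the deterministic parameter $\tau^0$ to the random estimate $\h\tau,$ and this is precisely what Condition A1(i) encodes by forcing $u_T=o(l_T).$
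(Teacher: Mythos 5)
Your proposal is correct and follows essentially the same route as the paper's own proof: localise $\h\tau$ in $\cG(u_T,0)$ via Theorem \ref{thm:alg1cp}(ii), deduce $\h\tau\wedge(1-\h\tau)\ge c_ul_T$ from $u_T/l_T=o(1)$ under Condition A1, and then plug into the uniform bound of Theorem \ref{thm:unifmean}(ii), with assumption (\ref{asm:additional}) collapsing the $\max$ to its first term. Your rearrangement showing that $\xi_{\iny}u_T/l_T\le c\,\si_{\vep}\{\log(p\vee T)/(Tl_T)\}^{1/2}$ is exactly (\ref{asm:additional}) is verified correct; the only blemish is the notational reuse of $q$ both for the mean index and the norm index, which does not affect the argument.
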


The above results provide all necessary machinery required to detect, estimate and perform inference on the change point parameter of the assumed model (\ref{model:subgseries}). We conclude this section with a final note on the computational efficiency of the proposed methodology. As discussed earlier, for each fixed $\la_1$ and $\la_2,$ Step 0 of Algorithm 1 is simply four arithmetic operations, same holds true for obtaining $\h\mu_1,$ and $\h\mu_2$ of Corollary \ref{cor:final}. Step 1 of Algorithm 1, and the projected least squares optimization in (\ref{est:optimal}) can be reduced to explicit identification of minima amongst $T+1$ numbers, as described earlier in (\ref{est:discrete}). This simplicity of computation allows the proposed methods to be implemented on potentially very large data sets.

\section{Numerical Results}\label{sec:numerical}

This section empirically illustrates the results developed in the preceding sections. The three main objectives of this section are the following: (i) to evaluate the estimation performance of the proposed projected least squares ($PLS$) estimator, and the new nuisance parameter estimation methodology (Algorithm 1, referred to as $AL1$ in the following), while benchmarking the performance of these methods with the estimator ($WS$) of \cite{wang2018high}. (ii) To evaluate the detection performance method $AL1$, i.e., to evaluate its ability to consistently detect the existence of a change point. Finally, (iii) to evaluate the empirical inference performance of the proposed $PLS$ estimator. The $PLS$ method is applied in conjunction with $AL1,$ which is used to obtain nuisance estimates $\h\mu_1,\h\mu_2$ required for the implementation of $PLS,$ in keeping with the result of Theorem \ref{cor:final}. In all simulations we assume no prior knowledge of any underlying parameters, in particular the method $AL1$ is initialized with $\check\tau=0.5$ irrespective of the value of $\tau^0.$ The first two objectives listed above are provided in Simulation A, and the results for the latter objective are provided in Simulation B.

In all our simulation designs, the unobserved noise variables $\vep_t$ are generated as independent Gaussian r.v.'s, more precisely we set $\vep_t\sim \cN(0,\Si),$ where $\Si$ is a $p\times p$ matrix with elements $\Si_{ij}=\rho^{|i-j|},$ and $\rho=0.5.$ The mean parameters of the model are set to be $\mu_1^0=(1_{1\times s},0_{p-s})^T_{p\times 1}$ and $\mu_2=(0_{1\times s},1_{1\times s},0_{p-2s})^T_{p\times 1},$ with $s=5.$ We let the model dimension to be $p\in\{50,500,750\}.$ The remaining specifications of Simulation A and Simulation B are as follows. For Simulation A we consider two cases, Simulation A.I considers $\tau^0\in\{0.2,0.4,0.6,0.8\}$ and evaluates the estimation performance of the $PLS,$ $AL1$ estimators, while benchmarking against the $WS$ estimator. Simulation A.II evaluates the detection ability of method $AL1$ by considering the cases $\tau^0\in\{0.8,1\},$ the first case meant to evaluate the true positive rate (existence of a change point is correctly identified) and the case of $\tau^0=1$ to evaluate the true negative rate. In both cases of Simulation A, we consider the sample size $T\in\{100,225,350\}.$ The tuning parameters $\la_1,$ $\la_2$ and $\gamma$ of the method $AL1$ are chosen adaptively using a BIC type criteria, the pertinent details of which are provided in Appendix D of the supplementary materials.

Simulation B is dedicated to evaluating the inference performance of the $PLS$ estimator. Using Theorem \ref{thm:limitingdist} we construct confidence intervals $\big[(T\tilde\tau-c_{\alpha}\si^2/\xi^2),\, (T\tilde\tau+c_{\alpha}\si^2/\xi^2)\big],$ for the change point parameter in the integer scale ($T\tau^0$), where $c_{\alpha}$ represents the $1-\alpha$ level critical value of the limiting distribution in (\ref{eq:limitingdist}). This critical value is evaluated as $c_{\alpha}=11.03$ using its distribution function provided in \cite{yao1987approximating}. For implementation of the confidence interval, we utilize plugin estimates of $\si^2$ and $\xi^2$ whose computational details are provided in Appendix D of the supplementary materials. In this simulation we consider $\tau^0\in\{0.2,0.4,0.6,0.8\}$ and the sample size  $T=350.$ In all cases of this simulation we construct $95\%$ confidence intervals ($\al=0.05$). For this simulation, we switch off the detection ability of $AL1$ by setting $\gamma=0.$ This is done solely to obtain as many confidence intervals as the number of replications (i.e., to avoid false negatives from $AL1$). In practice, when the $AL1$ methods detects $\h\tau=1,$ one would no longer pursue the inference objective of obtaining a confidence interval for its location.

To report our results we provide the following metrics which are computed based on 100 (for Simulation A) or 500 (for Simulation B) monte carlo repetitions: bias ($|E(\h\tau-\tau^0)|$), root mean squared error (RMSE, $\big\{E(\h\tau-\tau^0)^2\big\}^{1/2}$), true positive rate (TPR, $pr(\h\tau<1),$ when $\tau^0<1$) true negative rate (TNR, $pr(\h\tau=1),$ when $\tau^0=1$), coverage (relative frequency of the number of times $\tau^0$ lies in the confidence interval), and standard error (SE, average over replications of the computed standard error of $T\tilde\tau,$ i.e, $\h\si^2/\h\xi^2$).
\begin{table}
	 \resizebox{1\textwidth}{!}{
	\begin{tabular}{cccccccc}
			\hline
			\multicolumn{2}{c}{$\tau^0=0.2,$ $s=5$} & \multicolumn{2}{c}{$AL1$}                   & \multicolumn{2}{c}{$PLS$}                   & \multicolumn{2}{c}{$WS$}                    \\ \hline
			$T$                & $p$                & {\bf bias} ($\times 10^2$) & {\bf RMSE} ($\times 10^2$) & {\bf bias} ($\times 10^2$) & {\bf RMSE}  ($\times 10^2$) &{\bf bias} ($\times 10^2$)& {\bf RMSE}  ($\times 10^2$) \\ \hline
			100                & 50                 & 1.480                & 4.025                & 0.300                & 2.035                & 0.060                & 1.549                \\
			100                & 500                & 0.760                & 2.874                & 0.280                & 1.435                & 0.730                & 3.312                \\
			100                & 750                & 0.050                & 1.404                & 0.130                & 1.127                & 0.720                & 2.915                \\ \hline
			225                & 50                 & 0.556                & 1.172                & 0.018                & 0.377                & 0.031                & 0.578                \\
			225                & 500                & 0.307                & 0.748                & 0.116                & 0.431                & 0.084                & 0.385                \\
			225                & 750                & 0.440                & 1.977                & 0.062                & 0.507                & 0.049                & 0.442                \\ \hline
			350                & 50                 & 0.311                & 0.698                & 0.003                & 0.223                & 0.009                & 0.227                \\
			350                & 500                & 0.197                & 0.440                & 0.009                & 0.178                & 0.023                & 0.323                \\
			350                & 750                & 0.409                & 1.007                & 0.029                & 0.316                & 0.014                & 0.281                \\ \hline
	\end{tabular}}
	\label{tab:estimation}
\caption{\footnotesize{Results of Simulation A.I: estimation performance of $AL1,$ $PLS$ and $WS$ methods. Here, bias ($|E(\h\tau-\tau^0)|$), and root mean squared error (RMSE, $\big\{E(\h\tau-\tau^0)^2\big\}^{1/2}$).}}
\end{table}

\begin{table}
	\centering
	 \resizebox{0.57\textwidth}{!}{
		\begin{tabular}{ccccccc}
			\hline
			\multirow{2}{*}{$s=5$} & \multicolumn{3}{c}{$\tau^0=1$}             & \multicolumn{3}{c}{$\tau^0=0.8$}           \\ \cline{2-7}
			& $p=50$       & $p=500$      & $p=750$      & $p=50$       & $p=500$      & $p=750$      \\ \hline
			$T$                    & \textbf{TNR} & \textbf{TNR} & \textbf{TNR} & \textbf{TPR} & \textbf{TPR} & \textbf{TPR} \\ \hline
			100                    & 1            & 0.96         & 0.98         & 1            & 0.83         & 0.77         \\
			225                    & 1            & 1            & 1            & 1            & 1            & 1            \\
			350                    & 1            & 1            & 1            & 1            & 1            & 1            \\ \hline
	\end{tabular}}
	\label{tab:detection}
	\caption{\footnotesize{Results of Simulation A.II: evaluation of $AL1$ method for detection of a change point. Here, true positive rate (TPR, $pr(\h\tau<1),$ when $\tau^0<1$) true negative rate (TNR, $pr(\h\tau=1),$ when $\tau^0=1$).}}
\end{table}
\begin{table}
	\centering
		 \resizebox{0.8\textwidth}{!}{
		\begin{tabular}{ccccccc}
			\hline
			$T=350,$ $s=5$  & \multicolumn{2}{c}{$p=50$}      & \multicolumn{2}{c}{$p=500$}     & \multicolumn{2}{c}{$p=750$}     \\ \hline
			$\tau^0$ & \textbf{Coverage} & \textbf{SE} & \textbf{Coverage} & \textbf{SE} & \textbf{Coverage} & \textbf{SE} \\ \hline
			0.2      & 0.950             & 0.161       & 0.932             & 0.164       & 0.950             & 0.161       \\
			0.4      & 0.966             & 0.179       & 0.954             & 0.176       & 0.966             & 0.179       \\
			0.6      & 0.944             & 0.177       & 0.940             & 0.176       & 0.944             & 0.177       \\
			0.8      & 0.926             & 0.161       & 0.936             & 0.163       & 0.926             & 0.161       \\ \hline	
		\end{tabular}}
	\label{tab:inference}
		\caption{\footnotesize{Results of Simulation B: coverage and standard error of the $PLS$ estimator. Here, coverage (relative frequency of the number of times $\tau^0$ lies in the confidence interval), and standard error (SE, average over replications of the computed standard error of $T\tilde\tau,$ i.e, $\hat\si^2/\hat\xi^2$)}}
	
\end{table}
Partial results of Simulation A are provided in Table \ref{tab:estimation} and Table \ref{tab:detection}, the results of all remaining cases of this simulation are
provided in Table \ref{tab:estimation.app1} - Table \ref{tab:estimation.app3} in Appendix D of the supplementary materials. The results of Simulation B are provided in Table \ref{tab:inference}. The numerical findings support our theoretical results regarding detection and estimation consistency and limiting distribution behavior of the proposed methods. In terms of estimation performance from Table \ref{tab:estimation}, although the method $AL1$ clearly exhibits improving performance with increasing $T,$ the proposed method $PLS$ and the benchmark $WS$ provide nearly uniformly better result in both bias and RMSE. This is not particularly surprising, since the near optimal rate of convergence of $AL1$ derived in Theorem \ref{thm:alg1cp} is indeed slower than that of $WS$ and the optimal rate of $PLS.$ There does not appear to be a uniform distinction amongst the proposed $PLS$ and the benchmarking $WS$ method, although the proposed $PLS$ method does seem to provide a lower bias and RMSE for a large proportion of the cases considered. The detection results of Table \ref{tab:detection} bring out the important benefit of using $AL1$ in place of $WS$ as a nuisance estimation method, since the latter does not posses the ability to detect the case of $\tau^0=1.$ In all cases for $T=225,350,$ perfect detection of the change point in terms of both TPR and TNR is observed. However we do remark here that it is inevitable that TPR shall suffer when the change point moves closer to the boundary of $(0,1).$ Finally, from the coverage results of Table \ref{tab:inference}, the proposed $PLS$ method provides good control on the nominal significance level and is in keeping with the limiting distribution result of Theorem \ref{thm:limitingdist}. Furthermore, the standard error estimates appear to be stable accross increasing values of $p.$

\section*{Supplementary material}\label{SM}

This supplementary material provides four appendices. Appendix A provides the proofs to the results of Section \ref{sec:mainresults} and Section \ref{sec:nuisance} of the main article. Appendix B provides necessary stochastic bounds that are utilized in the proofs of Appendix A. Appendix C provides some auxiliary results from the literature that have been utilized in proofs of this article. Finally Appendix D provides a detailed discussion of the initializing Condition D of Algorithm 1, this appendix also provides additional details and numerical results which were omitted from Section \ref{sec:numerical} of the main article.

\appendix

\setcounter{equation}{0}
\renewcommand{\theequation}{A.\arabic{equation}}

\renewcommand{\thesection}{A}
\section*{Appendix A: Proofs}

\subsection*{Proofs of results in Section \ref{sec:mainresults}}

\begin{proof}[Proof of Lemma \ref{lem:mainlowerb}]
	A couple of observations utilized in the arguments to follow. Observe the following algebraic expansion for any $t\ge \tau^0,$
	\benr\label{eq:eq1}
	\h z_t-\h\theta_2=\h\eta^T\vep_t-\h\eta^T(\h\mu_2-\mu_2^0).
	\eenr
	Also, notice that  $\h\theta_1-\h\theta_2=\|\h\mu_1-\h\mu_2\|_2^2,$ and that the following bound that hold with probability $1-\Delta_T,$
	\benr\label{eq:ulb}
	\Big|(\h\theta_1-\h\theta_2)^2+2\h\eta^T(\h\mu_2-\mu_2^0)(\h\theta_1-\h\theta_2)\Big|\ge c_u\xi^4\Big(1-\frac{\|\h\mu_2-\mu_2^0\|_2}{\xi}\Big)\ge c_u\xi^4.
	\eenr
	This bound is obtained by using Condition A and Condition C along with the bound $c_{u1}\xi^2\le (\h\theta_1-\h\theta_2)\le c_{u2}\xi^2,$ which in turn also holds with probability $1-\Delta_T,$ and as a consequence of Condition A and C again. Now, without loss in generality (wlog) assume that $\tilde\tau\ge\tau^0,$ (the case of $\tilde\tau<\tau^0$ shall follow symmetrically) then,
	\benr
	\cU(\h z,\tau,\h\theta_1,\h\theta_2)&=&Q(\h z,\tau,\h\theta_1,\h\theta_2)-Q(\h z,\tau^0,\h\theta_1,\h\theta_2)\nn\\
	&=&\frac{1}{T}\sum_{t=1}^{\lfloor T\tau\rfloor}(\h z_t-\h\theta_1)^2 + \frac{1}{T}\sum_{t= \lfloor T\tau\rfloor+1}^{T}(\h z_t-\h\theta_2)^2 -\frac{1}{T}\sum_{t=1}^{\lfloor T\tau^0\rfloor}(\h z_t-\h\theta_1)^2 - \frac{1}{T}\sum_{t=\lfloor T\tau^0\rfloor+1}^{T}(\h z_t-\h\theta_2)^2  \nn\\
	&=&\frac{1}{T}\sum_{t=\lfloor T\tau^0\rfloor+1}^{\lfloor T\tau\rfloor}(\h z_t-\h\theta_1)^2 - \frac{1}{T}\sum_{t=\lfloor T\tau^0\rfloor+1}^{\lfloor T\tau\rfloor}(\h z_t-\h \theta_2)^2\nn\\
	&=&\frac{1}{T}\sum_{t=\lfloor T\tau^0\rfloor+1}^{\lfloor T\tau\rfloor} (\h\theta_1-\h\theta_2)^2- \frac{2}{T}\sum_{t=\lfloor T\tau^0\rfloor+1}^{\lfloor T\tau\rfloor}(\h z_t-\h\theta_2)(\h\theta_1-\h\theta_2)\nn\\
	&=&\frac{1}{T}\big(\lfloor T\tau\rfloor-\lfloor T\tau^0\rfloor\big)\big\{(\h\theta_1-\h\theta_2)^2+2\h\eta^T(\h\mu_2-\mu_2^0)(\h\theta_1-\h\theta_2)\big\}- \frac{2}{T}\sum_{t=\lfloor T\tau^0\rfloor+1}^{\lfloor T\tau\rfloor}\h\eta^T\vep_t(\h\theta_1-\h\theta_2)\nn\\
	&\ge& \frac{c_u\xi_T^4}{T}(\lfloor T\tau\rfloor-\lfloor T\tau^0\rfloor)-\frac{2\xi_T^2}{T}\sum_{t=\lfloor T\tau^0\rfloor+1}^{\lfloor T\tau\rfloor}\h\eta^T\vep_t\nn\\
	&\ge&  c_uv_T\xi_T^4-c_{u1}\si_{\vep}\xi^3\Big\{\Big(\frac{u_T}{T}\Big)^{\frac{1}{2}}+\Big(\frac{u_T}{T}\Big)^{\frac{1}{2}}\Big\{\frac{s\log (p\vee T)}{\xi\surd (Tl_T)}\Big\}\Big\}
	\ge c_u\xi^4\Big\{v_T-\frac{c_{u1}\si_{\vep}}{\xi}\Big(\frac{u_T}{T}\Big)^{\frac{1}{2}}\Big\}\nn
	\eenr
	with probability at least $1-\gamma-\Delta_T-c_1\exp\{-c_2\log (p\vee T)\}.$ Here the last equality follows by using (\ref{eq:eq1}). The first inequality follows by an application of (\ref{eq:ulb}). The second to last and the last inequality follows by an application of Lemma \ref{lem:stocb} and Condition A respectively. Uniformity over $\cG(u_T,v_T)$ is directly obtained since the stochastic bound of Lemma \ref{lem:stocb} holds uniformly over the same collection. Repeating a similar argument with $\tilde\tau\le \tau^0,$ yields the statement of this lemma.
\end{proof}

\begin{proof}[Proof of Theorem \ref{thm:optimalapprox}]
	For any $v_T>0,$ apply Lemma \ref{lem:mainlowerb} on the set $\cG(1,v_T)$ to obtain,
	\benr
	\inf_{\tau\in\cG(1,v_T)} \cU(\h z, \tau,\h\mu_1,\h\mu_2)\ge c_u\xi^{4}\Big\{v_T-\frac{c_{u1}\si_{\vep}}{\xi}\Big(\frac{1}{T}\Big)^{\frac{1}{2}}\Big\}\nn
	\eenr
	with probability at least $1-\gamma-\Delta_T-o(1).$ Then upon choosing $v_T=v_T^*\ge c_u\si_{\vep}\big/\xi \surd T,$ for an appropriately chosen $c_u>0,$ we have that $\inf_{\tau\in\cG(1,v_T)} \cU(\h z,\tau,\h\mu_1,\h\mu_2)>0.$ This implies that $\tilde\tau\notin\cG(1,v_T^*),$ i.e., $|\lfloor T\tilde\tau\rfloor-\lfloor T\tau^0\rfloor|\le T v_T^*,$ with probability $1-\gamma-\Delta_T-o(1).$ Now, reset $u_T=v_T^*$ and reapply Lemma \ref{lem:mainlowerb} for any $v_T>0$ to obtain,
	\benr
	\inf_{\tau\in\cG(u_T,v_T)} \cU(\h z,\tau,\h\mu_1,\h\mu_2)\ge c_u\xi^{4}\Big\{v_T-\frac{c_u\si_{\vep}}{\xi}\Big(\frac{u_T}{T}\Big)^{\frac{1}{2}}\Big\}\nn
	\eenr
	Now upon choosing,
	\benr
	v_T=v_T^*\ge c_u\Big(\frac{\si_{\vep}}{\xi}\Big)^{1+\frac{1}{2}}\Big(\frac{1}{T}\Big)^{\frac{1}{2}+\frac{1}{4}},
	\eenr
	we obtain that $\inf_{\tau\in\cG(u_T,v_T)} \cU(\h z,\tau,\h\mu_1,\h\mu_2)>0,$ with probability at least  $1-\gamma-\Delta_T-o(1).$ Consequently $\tilde\tau\notin\cG(u_T,v_T^*),$ i.e., $|\lfloor T\tilde\tau\rfloor-\lfloor T\tau^0\rfloor|\le T v_T^*.$ Note that the above recursion tightens the rate at each step. Continuing these recursions by resetting $u_T$ to the bound of the previous recursion, and applying Lemma \ref{lem:mainlowerb}, we obtain for the $m^{th}$ recursion,
	\benr
	\big|\lfloor T\tilde\tau\rfloor-\lfloor T\tau^0\rfloor\big|\le c_uT\Big(\frac{\si_{\vep}}{\xi}\Big)^{b_m}\Big(\frac{1}{T}\Big)^{a_m},\quad{\rm where}\quad a_m=\sum_{j=1}^{m}\frac{1}{2^j},\,\,\,  b_m=\sum_{j=0}^{m-1}\frac{1}{2^j} \nn
	\eenr
	Note that, despite the recursions in the above argument, the probability of the bound after every recursion is maintained to be at least $1-\gamma-\Delta_T-o(1).$ This follows since, the probability statement is arising from the stochastic bound of Lemma \ref{lem:stocb} applied recursively, and with a tighter bound at each recursion. Note that this yields a sequence of events such that each event is a proper subset of the one at the previous recursion. We also refer to Remark A.1 in \cite{kaul2019efficient} and Remark A.3 in \cite{kaul2019detection} for further details on this argument. To finish the proof, note that upon continuing the above recursions an infinite number of times we obtain $a_\iny=\sum_{j=1}^{\iny}1/2^j=1,$ and $b_{\iny}=\sum_{j=0}^{\iny}1/2^j=2,$ thus yielding the statement of this theorem.
\end{proof}

For a clearer exposition of the proof of Theorem \ref{thm:limitingdist} below, we use the following additional notation. Denote by
\benr\label{def:cUshort}
\h \cU(\tau)=\cU(\h z,\tau,\h\theta_1,\h\theta_2),\quad{\rm and}\quad\cU(\tau)=\cU(z,\tau,\theta_1^0,\theta_2^0),
\eenr
where $\cU(z,\tau,\theta_1,\theta_2)$ is as defined in (\ref{def:cU}). The proof of this theorem shall also rely on the `Argmax' theorem, see, Theorem 3.2.2 of \cite{vaart1996weak} (reproduced as Theorem \ref{thm:argmax}).

\begin{proof}[Proof of Theorem \ref{thm:limitingdist}]
	The structure of this proof is similar in spirit to the general approach typically used in the literature to prove this weak convergence, see, e.g. \cite{bai1994}, \cite{bai1997estimation}, \cite{bai2010common}, \cite{bhattacharjee2019change} among several others. However our specific setup involves a few more remainder terms that shall require a delicate analysis. Under the assumed regime of $\xi\to 0,$ recall from Remark \ref{rem:optapprox} that we have $T\xi^2(\tilde\tau-\tau^0)=O_p(1).$ It is thus sufficient to examine the behavior of $\tilde\tau,$ such that $\tilde\tau=\tau^0+rT^{-1}\xi^{-2}.$ Now in view of `Argmax' theorem (Theorem \ref{thm:argmax}), in order to prove the statement of this theorem it is sufficient to establish the following results, for any $|r|\le M,$ with $M>0,$
	\benr\label{eq:steps}
	&(i)& T\xi^{-2}\sup_{\tau\in\cG\big((|r|T^{-1}\xi^{-2}),0\big)} \big|\h\cU(\tau)-\cU(\tau)\big|=o_p(1),\quad {\rm and}\nn\nn\\
	&(ii)& T\xi^{-2}\cU(\tau^0+r\xi^{-2}T^{-1})\Rightarrow \big(|r|-2\si W(r)\big)
	\eenr
	The remainder of the proof is separated into two steps. {\bf Step 1} provides the result (i) of (\ref{eq:steps}) and {\bf Step 2}  provides the result (ii) of (\ref{eq:steps}). We prove both these steps for the case where $r\ge 0,$ the mirroring case of $r<0$ shall follow by symmetry.
	
	\vspace{1.5mm}	
	{\noi {\bf Step 1:}} We begin by defining the following,
	\benr
	R_1&=&\sum_{\lfloor T\tau^0\rfloor+1}^{\lfloor T\tau\rfloor}(\h\theta_1-\h\theta_2)^2-2\sum_{\lfloor T\tau^0\rfloor+1}^{\lfloor T\tau\rfloor}(\h z_t-\h\theta_2)(\h\theta_1-\h\theta_2)=R_{11}-2R_{12},\quad{\rm and}\nn\\
	R_2&=&\sum_{\lfloor T\tau^0\rfloor+1}^{\lfloor T\tau\rfloor}(\theta_1^0-\theta_2^0)^2-2\sum_{\lfloor T\tau^0\rfloor+1}^{\lfloor T\tau\rfloor}(z_t-\theta_2^0)(\theta_1^0-\theta_2^0)=R_{21}-2R_{22}.\nn
	\eenr
	Then we have the following algebraic expansion,
	\benr\label{eq:algebra}
	T\xi^{-2}\big(\h\cU(\tau)-\cU(\tau)\big)&=&T\xi^{-2}\Big(Q(\h z,\tau,\h\theta_1,\h\theta_2)-Q(\h z,\tau^0,\h\theta_1,\h\theta_2)\Big)\nn\\
	&&-T\xi^{-2}\Big(Q(z,\tau,\theta_1^0,\theta_2^0)-Q(z,\tau^0,\theta_1^0,\theta_2^0)\Big)\nn\\
	&=&\xi^{-2}\big(R_{1}-R_{2}\big)=\xi^{-2}\Big\{\big(R_{11}-2R_{12}\big)-\big(R_{21}-2R_{22}\big)\Big\}.
	\eenr
	In the following we provide uniform bounds on the expressions $\xi^{-2}\big|R_{11}-R_{21}\big|,$ and $\xi^{-2}\big|R_{12}-R_{22}\big|.$ First consider,
	\benr\label{eq:unifpart1}
	\sup_{\tau\in\cG\big((|r|T^{-1}\xi^{-2}),0\big)}\xi^{-2}|R_{11}-R_{21}|=\sup_{\tau\in\cG\big((|r|T^{-1}\xi^{-2}),0\big)}\xi^{-2}\Big|\sum_{\lfloor T\tau^0\rfloor+1}^{\lfloor T\tau\rfloor}(\h\theta_1-\h\theta_2)^2-\sum_{\lfloor T\tau^0\rfloor+1}^{\lfloor T\tau\rfloor}(\theta_1^0-\theta_2^0)^2\Big|\nn\\
	=\sup_{\tau\in\cG\big((|r|T^{-1}\xi^{-2}),0\big)}\xi^{-2}\Big|\sum_{\lfloor T\tau^0\rfloor+1}^{\lfloor T\tau\rfloor}\big\{(\h\theta_1-\h\theta_2)-(\theta_1^0-\theta_2^0)\big\}\big\{(\h\theta_1-\h\theta_2)+(\theta_1^0-\theta_2^0)\big\}\Big|\nn\\
	\le c_u\sup_{\tau\in\cG\big((|r|T^{-1}\xi^{-2}),0\big)}\Big|\sum_{\lfloor T\tau^0\rfloor+1}^{\lfloor T\tau\rfloor}\big\{(\h\theta_1-\h\theta_2)-(\theta_1^0-\theta_2^0)\big\}\Big|\hspace{1.6in}\nn\\
	\le c_u\si_{\vep}\xi\big(r\xi^{-2}\big)\Big\{\frac{s\log (p\vee T)}{Tl_T}\Big\}^{\frac{1}{2}}=o(1)\hspace{2.6in}.
	\eenr
	Here the second to last inequality follows by using the bound $(\h\theta_1-\h\theta_2)\le c_u\xi^2,$ which holds with probability at least $1-\Delta_T,$ using Condition A and Condition C. The final inequality follows using the bound $\big|(\h\theta_1-\h\theta_2)-(\theta_1^0-\h\theta_2^0)\big|\le \xi\surd\big\{s\log (p\vee T)\big/Tl_T\big\},$ that holds with probability at least $1-\Delta_T,$ again from Condition A and Condition C. The final equality holds by an application of Condition A(iii) and by using $|r|\le M.$
	
	Next consider the term $\xi^{-2}(R_{12}-R_{22}).$ An algebraic rearrangement on this difference together with an application of the elementary triangle inequality on absolute values  yields,
	\benr
	\sup_{\tau\in\cG\big((|r|T^{-1}\xi^{-2}),0\big)}\xi^{-2}|R_{12}-R_{22}|&\le& \sup_{\tau\in\cG\big((|r|T^{-1}\xi^{-2}),0\big)}\xi^{-2}\Big|\sum_{\lfloor T\tau^0\rfloor+1}^{\lfloor T\tau\rfloor}\psi_t\{(\h\theta_1-\h\theta_2)-(\theta_1^0-\theta_2^0)\}\Big|\nn\\
	&&+\sup_{\tau\in\cG\big((|r|T^{-1}\xi^{-2}),0\big)}\xi^{-2}\Big|\sum_{\lfloor T\tau^0\rfloor+1}^{\lfloor T\tau\rfloor}\{(\h\mu_1-\h\mu_2)-(\mu_1^0-\mu_2^0)\}^T\vep_t(\h\theta_1-\h\theta_2)\Big|\nn\\
	&&+\sup_{\tau\in\cG\big((|r|T^{-1}\xi^{-2}),0\big)}\xi^{-2}\Big|\sum_{\lfloor T\tau^0\rfloor+1}^{\lfloor T\tau\rfloor}(\h\mu_1-\h\mu_2)^T(\h\mu_2-\mu_2^0)(\h\theta_1-\h\theta_2)\Big|\nn\\
	&=&T1+T2+T3\nn
	\eenr
	
	The term $T3$ can be bounded above by $T3\le c_u\si_{\vep}(r\xi^{-2})\xi\surd\big\{s\log (p\vee T)/Tl_T\big\}=o(1),$ with probability at least $1-\Delta_T.$ This is achieved by using the bounds $(\h\theta_1-\h\theta_2)\le c_u\xi^2,$ and the Cauchy-Schwartz inequality on the inner product  $(\h\mu_1-\h\mu_2)^T(\h\mu_2-\mu_2^0).$ 	Term $T2$ can be bounded as given in (\ref{eq:in1}) in the proof of Lemma \ref{lem:stocb}. Upon combining this bound with the assumption (\ref{asm:jumpnew}) yields, $T2\le c_u\si_{\vep}^2(\surd r)\big\{s\log (p\vee T)\big\}\big/\big\{\xi\surd {(Tl_T)}\big\}=o(1),$ with probability at least $1-\Delta_T-o(1).$ Term $T1$ can be bounded above by utilizing the bound $\big|(\h\theta_1-\h\theta_2)-(\theta_1^0-\h\theta_2^0)\big|\le c_u\si_{\vep}\xi\surd\big\{{s\log (p\vee T)\big/Tl_T}\big\},$ together with the fundamental subgaussian bound on $|\sum\psi_t|,$ that holds with probability at least $1-\Delta_T-o(1).$ This yields $T1\le c_u\si_{\vep}^2\xi^{-1}\surd\big(r s\log (p\vee T)\big/Tl_T\big)=o(1),$ with probability at least $1-\Delta_T-o(1).$ Combining these bounds for $T1,T2$ and $T3,$ we obtain a $o(1)$ uniform bound for the term $\xi^{-2}|R_{21}-R_{22}|.$ Substituting this result together with the bound (\ref{eq:unifpart1}) in (\ref{eq:algebra}), we obtain the assertion made in Part (i) of (\ref{eq:steps}) for $r\ge 0.$ Repeating similar arguments for $r<0,$ yields the same bound and completes the proof of (i) of (\ref{eq:steps}).
	
	\vspace{1.5mm}	
	{\noi {\bf Step 2:}} Here we show that when $r\ge 0,$ we have $T\xi^{-2}\cU(\tau^0+r\xi^{-2}T^{-1})\Rightarrow \big(r-2\si W(r)\big).$ Consider,
	\benr\label{eq:weakconv}
	T\xi^{-2}\cU(\tau^0+r\xi^{-2}T^{-1})&=&T\xi^{-2}Q(z,\tau^0+r\xi^{-2}T^{-1},\theta_1^0,\theta_2^0)-T\xi^{-2}Q(z,\tau^0,\theta_1^0,\theta_2^0)\nn\\
	&=&\xi^{-2}\sum_{t=\lfloor T\tau^0\rfloor+1}^{\lfloor T\tau^0+r\xi^{-2}\rfloor}(z_t-\theta_1^0)^2-\xi^{-2}\sum_{t=\lfloor T\tau^0\rfloor+1}^{\lfloor T\tau^0+r\xi^{-2}\rfloor}(z_t-\theta_2^0)^2\nn\\
	&=&\sum_{t=\lfloor T\tau^0\rfloor+1}^{\lfloor T\tau^0+r\xi^{-2}\rfloor}\xi^2-2\sum_{t=\lfloor T\tau^0\rfloor+1}^{\lfloor T\tau^0+r\xi^{-2}\rfloor}\psi_t=T1-2\,T2
	\eenr
	For notational simplicity assume that $T\tau^0,$ and $r\xi^{-2}$ are integers, else one may resort to the inequality $\big(T(\tau-\tau^0)-1\big)\le (\lfloor T\tau\rfloor- \lfloor T\tau^0\rfloor)\le \big(T(\tau-\tau^0)+1\big),$ and show that the remainder is $o(1)$ under the assumption $\xi\to 0.$ Clearly, the term $T1=r,$ and term $T2$ can be expressed as,
	\benr
	T2=\xi\sum_{T\tau^0+1}^{T\tau^0+r\xi^{-2}}\psi_t^*,\nn
	\eenr
	where $\psi_t^*=\psi_t/\xi.$ By the definition of $\psi_t,$ we also have that $\psi_t^*,$ $t=1,...,T$ are i.i.d. mean zero subgaussian r.v.'s with variance term given by, ${\rm var(\psi_t^*)}=\eta^{0T}\Sigma_{\vep}\eta^0\big/\xi^2.$ Additionally recall by assumption we also have that  $\eta^{0T}\Sigma_{\vep}\eta^0\big/\xi^2\to \sigma.$ Furthermore,
	\benr
	T2=\xi\sum_{t=T\tau^0+1}^{T\tau^0+r\xi^{-2}}\psi_t^*= \xi\sum_{t=1}^{r\xi^{-2}}\psi_{T\tau^0+t}^*
	\eenr
	where the final equality follows by a change of index. The final term is now in a familiar form whose weak limit under $\xi\to 0$ is well known, see, e.g. Theorem 5.5  of \cite{hallmartingale} or (9) of \cite{bai1994}. In particular we have $\xi\sum_{t=1}^{r\xi^{-2}}\psi_{T\tau^0+t}^*\Rightarrow \si W_1(r),$ where $W_1(\cdot)$ is a Brownian motion on $[0,\iny).$ This completes the proof of Step 2.  Repeating similar arguments for $r<0,$ yields (ii) of (\ref{eq:steps}) and thus concludes the proof of this theorem.
\end{proof}

\subsection*{Proofs of Section \ref{sec:nuisance}}

\begin{proof}[Proof of Theorem \ref{thm:unifmean}] Although this result can be proved directly using the properties of the soft-thresholding operator $k_{\la}(\cdotp),$ by building uniform versions of arguments such as those in \cite{rothman2009generalized}, or \cite{kaul2017structural}. Instead, we provide an alternative and more illustrative proof directly using the construction (\ref{est:softthresh}).
	
	We begin by first proving Part (ii) of this Theorem, i.e., for the case where $\tau^0<1.$ For any $\tau\in\cG(u_T,0),$ an algebraic rearrangement of the elementary inequality $\big\|\bar y_{(0:\tau]}-\h\mu_1(\tau)\big\|^2+\la_1\|\h\mu_1(\tau)\|_1\le \big\|\bar y_{(0:\tau]}-\mu_1^0\big\|^2+\la_1\|\mu_1^0\|_1$ yields, $\|\h\mu_1(\tau)-\mu_1^0\|_2^2+\la_1\|\h\mu_1(\tau)\|_1\le \la_1\|\mu^0_1\|_1+ 2\big\|\bar y_{(0:\tau]}-\mu_1^0\big\|_{\iny}\big\|\h\mu_1(\tau)-\mu_1^0\big\|_1.$ Let
	\benr
	\la=c_u\max\Big[\si_{\vep}\Big\{\frac{\log (p\vee T)}{Tl_T}\Big\}^{1/2},\,\,\frac{\xi_{\iny} u_T}{l_t}\Big],\nn
	\eenr
	then applying Lemma \ref{lem:yclowerb} we obtain uniformly over $\tau\in\cG(u_T,0),$
	\benr\label{eq:elementary}
	\|\h\mu_1(\tau)-\mu_1^0\|_2^2+\la_1\|\h\mu_1(\tau)\|_1\le \la_1\|\mu^0_1\|_1+\la\big\|\h\mu_1(\tau)-\mu_1^0\big\|_1,
	\eenr
	with probability at least $1-c_{u1}\exp\{-c_{u2}\log (p\vee T)\}.$ Choosing $\la_1\ge 2\la,$ leads to $\|\big(\h\mu_1(\tau)\big)_{S^c}\|_1\le 3\|\big(\h\mu_1(\tau)-\mu_1^0\big)_{S}\|_1,$ which proves the first part of this theorem. From inequality (\ref{eq:elementary}) we also have that,
	\benr
	\|\h\mu_1(\tau)-\mu_1^0\|_2^2\le 3\la_1\|\h\mu_1(\tau)-\mu_1^0\|_1\le 3\la_1\surd{s}\|\h\mu_1(\tau)-\mu_1^0\|_2
	\eenr
	This directly implies that  $\|\h\mu_1(\tau)-\mu_1^0\|_2\le 3\la_1\surd{s}.$ To obtain the corresponding $\ell_1$ bound, note that the relation $\|\big(\h\mu_1(\tau)\big)_{S^c}\|_1\le 3\|\big(\h\mu_1(\tau)-\mu_1^0\big)_{S}\|_1$ also implies that $\|\h\mu_1(\tau)-\mu_1^0\|_1\le c_u\sqrt{s}\|\h\mu_1(\tau)-\mu_1^0\|_2.$ To finish the proof of this part recall that the only stochastic bound used here is the uniform bound over $\cG(u_T,0)$ of Lemma \ref{lem:yclowerb}, consequently the final bound also holds uniformly over the same collection. Part (i) of this Theorem, i.e. for the case where $\tau^0=1,$ can be proved by nearly identical arguments. The only change is the choice of $\la_1,$ and this arises due to the following observation. Note that, in this case we have the bound,
	\benr
	\sup_{\substack{\tau\in(0,1)\\ \tau\wedge(1-\tau)\ge l_T}}\big\|\bar y_{(0:\tau]}-\mu_1^0\big\|_{\iny}\le \la=c_u\si_{\vep}\Big\{\frac{\log (p\vee T)}{Tl_T}\Big\}^{\frac{1}{2}},\nn
	\eenr
	with probability at least $1-c_{u1}\exp\{-c_{u2}\log (p\vee T)\},$ from (ii) of Lemma \ref{lem:yclowerb}.
\end{proof}

\begin{proof}[Proof of Corollary \ref{cor:meanws}] Note that we have by Theorem 1 of \cite{wang2018high} that,
	\benr\label{eq:wsbound}
	\big|\lfloor T\h\tau\rfloor-\lfloor T\tau^0\rfloor\big|\le c_u\xi^{-2}\si^2_{\vep}\log (\log T),
	\eenr
	with probability at least $1-o(1),$ i.e., $\h\tau\in\cG(u_T,0),$ with $u_T=c_u\xi^{-2}\si^2\log \{\log T\}\big/T,$ with the same probability. Combining the bound (\ref{eq:wsbound}) with the assumption $\tau^0\wedge(1-\tau^0)\ge l_T$ and (\ref{asm:wsasm}) we have that $\h\tau\wedge(1-\h\tau)\ge c_ul_T.$ Now applying Theorem \ref{thm:unifmean} with the given choice of $u_T$ yields the following results for $\h\mu_1=\h\mu_1(\h\tau).$ First, $(\h\mu_1-\mu_1^0)\in \cA,$ with probability at least $1-o(1),$ and that
	\benr
	\|\h\mu_1-\mu_1^0\|_2\le c_u s^{\frac{1}{2}}\max\Big[\si_{\vep}\Big\{\frac{\log (p \vee T)}{Tl_T}\Big\}^{\frac{1}{2}},\,\,\frac{\xi_{\iny}\si_{\vep}^2}{\xi^2 l_T}\Big\{\frac{\log (\log T)}{T}\Big\}\Big]\le c_u\si_{\vep} \Big\{\frac{s\log (p\vee T)}{Tl_T}\Big\}^{\frac{1}{2}}\nn
	\eenr
	with probability at least $1-o(1).$ The corresponding results for $\h\mu_2$ can obtained by using similar arguments. This completes the proof of this corollary.
\end{proof}

The overall structure of the proof of Theorem \ref{thm:alg1cp} below is similar to that of Theorem \ref{thm:optimalapprox}, however to present this proof we require the following additional notation and a preliminary lemma. Recall the p-dimensional version of the least squares loss $Q$ from (\ref{def:Qpdim}) and define the following for any $\mu_1,\mu_2\in\R^p,$ $\tau\in(0,1],$ and $\mu>0,$
\benr
\cU(y,\tau,\mu_1,\mu_2)&=&Q(y,\tau,\mu_1,\mu_2)-Q(y,\tau^0,\mu_1,\mu_2),\nn\\
\cU^*(y,\tau,\mu_1,\mu_2)&=&\cU(y,\tau,\mu_1,\mu_2)+\mu\big(\|\tau\|_0^*-\|\tau^0\|_0^*\big)\nn
\eenr
Additionally, let $l_T$ be as defined in Condition A1, and for any non-negative sequence $u_T$ define the function,
\benr
F(u_T)=\begin{cases} 0 & {\rm if}\,\, u_T\big/l_T\to 0\\ 1 & {\rm otherwise}
\end{cases}\nn
\eenr
Under these notations we have the following uniform lower bound, which is essentially a version of Lemma \ref{lem:mainlowerb} in the $p$-dimensional setup. Versions of this result have also been presented in \cite{kaul2019efficient} and \cite{kaul2019detection} in a high dimensional multi-phase linear regression setting with a single and multiple change points respectively.

\begin{lem}\label{lem:lowerbpdim} Suppose the conditions of Theorem \ref{thm:alg1cp}. Let $u_T,$ and $v_T$ be any non-negative sequences and let $\cG(u_T,v_T)$ be as defined in (\ref{def:setcG}). Additionally let $\check\mu_1,$ and $\check\mu_2$ be the mean estimates of Step 0 of Algorithm 1. Then we have the following lower bounds.\\
	(i) When $\tau^0=1,$
	\benr
	\inf_{\tau\in(0,1)}\cU^*(y,\tau,\check\mu_1,\check\mu_2)\ge\mu-c_u\si_{\vep}^2\Big\{\frac{s\log (p\vee T)}{T l_T}\Big\},\nn
	\eenr
	with probability at least $1-c_{u1}\exp\{-c_{u2}\log (p\vee T)\}.$\\
	(ii) When $\tau^0<1,$
	\benr
	\inf_{\tau\in\cG(u_T,v_T)}\cU^*(y,\tau,\check\mu_1,\check\mu_2)\ge c_u\xi_T^2\Big[v_T-c_{u1}\frac{\si_{\vep}}{\xi}\Big\{\frac{u_Ts\log (p\vee T)}{T}\Big\}^{\frac{1}{2}}-\frac{\mu}{\xi^2}F(u_T)\Big],\nn
	\eenr
	with probability at least $1-c_{u1}\exp\{-c_{u2}\log (p\vee T)\}.$
\end{lem}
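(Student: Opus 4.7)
The plan is to mirror the one-dimensional argument of Lemma \ref{lem:mainlowerb} but carry it out directly in $\R^p$, splitting the analysis into two branches according to whether a change point exists. The three main inputs I will draw on are: (a) the uniform mean-estimate bounds of Theorem \ref{thm:unifmean} applied to $\check\mu_1,\check\mu_2$ with $u_T=\check u_T$ (as dictated by Condition D), (b) Lemma \ref{lem:stocb} for uniform partial-sum control of the subgaussian processes $\sum_{t=a}^b \vep_t^T v$ as $v$ ranges over the cone $\cA$, and (c) the separation rate of Condition A1.

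For Part (i) with $\tau^0=1$, all observations share the common mean $\mu_1^0=\mu_2^0$, and the penalty contribution is $+\mu$ for every $\tau\in(0,1)$. I will write
$$\cU(y,\tau,\check\mu_1,\check\mu_2)=\frac{1}{T}\sum_{t=\lfloor T\tau\rfloor+1}^{T}\Big\{\|y_t-\check\mu_2\|_2^2-\|y_t-\check\mu_1\|_2^2\Big\},$$
substitute $y_t=\mu_1^0+\vep_t$, and expand to split this into a deterministic piece of order $\|\mu_1^0-\check\mu_1\|_2^2+\|\mu_1^0-\check\mu_2\|_2^2$ and a noise piece $\tfrac{2}{T}\sum \vep_t^T(\check\mu_1-\check\mu_2)$. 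Theorem \ref{thm:unifmean}(i) bounds the former by $c_u\si_\vep^2 s\log(p\vee T)/(T l_T)$; for the latter I will combine the cone membership $\check\mu_j-\mu_1^0\in\cA$ (which yields $\|\check\mu_1-\check\mu_2\|_1\le c_u\sqrt{s}\|\check\mu_1-\check\mu_2\|_2$) with a uniform $\ell_\infty$ bound on the partial sums of $\vep_t$. Together these give $|\cU|\le c_u\si_\vep^2 s\log(p\vee T)/(Tl_T)$ uniformly over $\tau$, and adding $\mu$ produces the claimed bound.

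For Part (ii) with $\tau^0<1$, assume without loss of generality $\tau\ge\tau^0$, the symmetric case being handled identically. Writing $\check\eta=\check\mu_1-\check\mu_2$ and using $y_t=\mu_2^0+\vep_t$ for $t>\lfloor T\tau^0\rfloor$, the same algebraic rearrangement used in Lemma \ref{lem:mainlowerb} gives
$$\cU=\frac{1}{T}\sum_{t=\lfloor T\tau^0\rfloor+1}^{\lfloor T\tau\rfloor}\Big\{\|\check\eta\|_2^2-2(\mu_2^0-\check\mu_2)^T\check\eta-2\vep_t^T\check\eta\Big\}.$$
Theorem \ref{thm:unifmean}(ii) together with Condition A1(i) will yield $\|\check\eta\|_2=\xi(1+o(1))$ and $|(\mu_2^0-\check\mu_2)^T\check\eta|=o(\xi^2)$, so the deterministic leading term is bounded below by $c_u\xi^2$ per summand, contributing at least $c_u\xi^2 v_T$ once the sum is taken over at least $Tv_T$ indices. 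For the noise term I will split $\check\eta=\eta^0+(\check\eta-\eta^0)$: the $\eta^0$ piece is a partial sum of scalar subgaussians with variance $O(\xi^2)$ over at most $Tu_T$ indices, and the $(\check\eta-\eta^0)$ piece is controlled uniformly via $(\check\eta-\eta^0)\in\cA$. Both bounds are supplied by Lemma \ref{lem:stocb} and combine to yield uniform noise control of order $c_u\xi\si_\vep\sqrt{u_T s\log(p\vee T)/T}$, which gives the stated lower bound on $\cU$. The $-\mu F(u_T)/\xi^2$ correction is handled by observing that $\tau=1$ can belong to $\cG(u_T,v_T)$ only when $1-\tau^0<u_T$, which by Condition A1 forces $l_T\le u_T$ and hence $F(u_T)=1$; in the complementary regime $u_T/l_T\to 0$ the point $\tau=1$ is excluded and the penalty contribution is identically zero, consistent with $F(u_T)=0$.

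The main technical obstacle I anticipate is the uniform empirical-process bound for $\sup_\tau T^{-1}|\sum_{t=\lfloor T\tau^0\rfloor+1}^{\lfloor T\tau\rfloor}\vep_t^T\check\eta|$, since $\check\eta$ itself depends on the data. Reducing this to the separate pieces $\eta^0$ and $\check\eta-\eta^0\in\cA$ via Theorem \ref{thm:unifmean} is the key move that tames the problem, after which the argument collapses to a standard subgaussian partial-sum maximal inequality over the cone $\cA$, exactly of the type packaged into Lemma \ref{lem:stocb}.
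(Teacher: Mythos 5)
Your overall architecture matches the paper's proof: expand $\cU(y,\tau,\check\mu_1,\check\mu_2)$ over the indices between $\lfloor T\tau^0\rfloor$ and $\lfloor T\tau\rfloor$ into a deterministic signal term, a cross term, and a noise term; lower-bound the signal via Theorem \ref{thm:unifmean}; and handle the penalty through the $F(u_T)$ dichotomy (your reading of $F(u_T)$ --- that $\tau=1$ can fall within the $u_T$-window only if $u_T\gtrsim l_T$ --- is exactly the intended one). The part (i) argument is also essentially the paper's.

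The one step that does not go through as written is your control of the noise term $\sup_\tau T^{-1}\big|\sum_{t}\vep_t^T\check\eta\big|$ in part (ii) via Lemma \ref{lem:stocb}. That lemma is proved under Condition C, i.e.\ for mean estimates with error $O\big(\si_{\vep}\{s\log(p\vee T)/(Tl_T)\}^{1/2}\big)$, and its conclusion reflects that rate. The Step-0 estimates $\check\mu_1,\check\mu_2$ satisfy only the much weaker bound of Theorem \ref{thm:unifmean}(ii) with $u_T=\check u_T$, namely $\|\check\mu_j-\mu_j^0\|_2\le r_T$ with $r_T$ potentially of order $\surd{s}\,\xi_{\iny}\check u_T/l_T$, which Condition A1(ii) only constrains to be $\le c_u\xi$ --- a small constant multiple of the jump size, nowhere near the Condition C rate. (For the same reason your intermediate claims $\|\check\eta\|_2=\xi(1+o(1))$ and $|(\mu_2^0-\check\mu_2)^T\check\eta|=o(\xi^2)$ are too strong; only $\ge c_u\xi^2$ and $\le c_u r_T(\xi+r_T)$ with $r_T/\xi\le c_u$ small are available, though that suffices.) So Lemma \ref{lem:stocb} cannot be cited here, and indeed its conclusion is not the bound $c_u\xi\si_{\vep}\{u_Ts\log(p\vee T)/T\}^{1/2}$ you need. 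Your decomposition $\check\eta=\eta^0+(\check\eta-\eta^0)$ can be repaired: the cone membership from Theorem \ref{thm:unifmean}(ii) lets you apply Lemma \ref{lem:unifb.restrictedset} to $(\check\eta-\eta^0)/\|\check\eta-\eta^0\|_2$ and then multiply by $\|\check\eta-\eta^0\|_2\le c_u\xi$, which recovers the stated rate. The paper avoids the issue altogether by a single H\"older step, bounding the noise by $\big\|\tfrac{2}{T}\sum_t\vep_t\big\|_{\iny}\,\|\check\eta\|_1$ with Lemma \ref{lem:epunifsup} and $\|\check\eta\|_1\le c_u\xi\surd{s}$; either route works once the correct (cruder) error rate of the Step-0 estimates is used.
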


\begin{proof}[Proof of Lemma \ref{lem:lowerbpdim}] We begin by proving Part (ii) of this lemma, i.e., for the case where $\tau^0<1.$ First note that under the given choice of $\la_1,$ and $\la_2,$ an application of Theorem \ref{thm:unifmean} yields,
	\benr\label{eq:checkmub}
	\|\check\mu_1-\mu_1^0\|_2\le r_T=c_u\surd s\max\Big[\si_{\vep}\Big\{\frac{\log p}{T l_T}\Big\}^{\frac{1}{2}},\,\frac{\xi_{\iny}\check u_T}{l_T}\Big]
	\eenr
	with probability $1-c_{u1}\exp\{-c_{u2}\log (p\vee T)\},$ and similar for $\check\mu_2.$ In this case note that the bound (\ref{eq:checkmub}) together with Condition A1 lead to the following observations that are utilized in the argument of this proof. First,
	\benr\label{eq:obs1}
	\|\check\mu_1-\check\mu_2\|_2^2\ge \xi^2(1-2r_T/\xi-r_T^2/\xi^2)\ge c_u\xi^2,
	\eenr
	with probability at least $1-c_{u1}\exp\{-c_{u2}\log (p\vee T)\}.$ Next, an application of Cauchy-Schwartz inequality yields with the same probability,
	\benr\label{eq:obs2}
	\big|(\check\mu_2-\mu_2^0)^T(\check\mu_1-\check\mu_2)\big|\le r_T(\xi+r_T).
	\eenr	
	Combining the bounds (\ref{eq:obs1}) and (\ref{eq:obs2}) we obtain that,  	
	\benr\label{eq:bound}
	\big|\|\check\mu_1-\check\mu_2\|_2^2+2(\check\mu_2-\mu_2^0)^T(\check\mu_1-\check\mu_2)\big|\ge c_u\xi^2[1-c_{u1}r_T/\xi-c_{u2}r_T^2/\xi^2]\ge c_u\xi^2,
	\eenr
	with probability at least $1-c_{u1}\exp\{-c_{u2}\log (p\vee T)\}.$ Now consider any $\tau\in \cG(u_T,v_T)$ and wlog assume that $\tau\ge \tau^0,$ (the case of $\tau<\tau^0$ shall follow symmetrically). Then,
	\benr\label{eq:Updimbound}
	\cU(y,\tau,\check\mu_1,\check\mu_2)&=&Q(y,\tau,\check\mu_1,\check\mu_2)-Q(y,\tau^0,\check\mu_1,\check\mu_2)\nn\\
	&=&\frac{1}{T}\sum_{t=1}^{\lfloor T\tau\rfloor}\big\|y_t-\check\mu_1\big\|_2^2+\frac{1}{T}\sum_{t=\lfloor T\tau\rfloor+1}^{T}\big\|y_t-\check\mu_2\big\|_2^2\nn\\
	&&-\frac{1}{T}\sum_{t=1}^{\lfloor T\tau^0\rfloor}\big\|y_t-\check\mu_1\big\|_2^2-\frac{1}{T}\sum_{t=\lfloor T\tau^0\rfloor+1}^{T}\big\|y_t-\check\mu_2\big\|_2^2\nn\\
	&=&\frac{1}{T}\sum_{t=\lfloor T\tau^0\rfloor+1}^{\lfloor T\tau\rfloor}\big\|y_t-\check\mu_1\big\|_2^2-\frac{1}{T}\sum_{t=\lfloor T\tau^0\rfloor+1}^{\lfloor T\tau\rfloor}\big\|y_t-\check\mu_2\big\|_2^2\nn \\
	&=&\frac{1}{T}\big(\lfloor T\tau\rfloor-\lfloor T\tau^0\rfloor\big)\|\check\mu_1-\check\mu_2\|_2^2-\frac{2}{T}\sum_{t=\lfloor T\tau^0\rfloor+1}^{\lfloor T\tau\rfloor}\vep_t^T(\check\mu_1-\check\mu_2)\nn\\
	&&+\frac{2}{T}\big(\lfloor T\tau\rfloor-\lfloor T\tau^0\rfloor\big)(\check\mu_2-\mu_2^0)^T(\check\mu_1-\check\mu_2)\nn\\
	&\ge& c_uv_T\xi^2-\Big\|\frac{2}{T}\sum_{t=\lfloor T\tau^0\rfloor+1}^{\lfloor T\tau\rfloor}\vep_t\Big\|_{\iny}\|\check\mu_1-\check\mu_2\|_1.\nn\\
	&\ge& c_u\xi_T^2\Big[v_T-c_{u1}\frac{\si_{\vep}}{\xi}\Big\{\frac{u_Ts\log (p\vee T)}{T}\Big\}^{\frac{1}{2}}\Big]
	\eenr
	with probability at least $1-c_{u1}\exp\{-c_{u2}\log (p\vee T)\}.$ Here the second to last inequality follows by using (\ref{eq:bound}). The final inequality follows by using Lemma \ref{lem:epunifsup} and together with the bound $\|\check\mu_1-\check\mu_2\|_1\le c_u\xi\surd s,$ which holds with the same probability and can be obtained by using the properties of $\h\mu_1,$ $\h\mu_2$ provided in Theorem \ref{thm:unifmean} and Condition A1. Finally recall by definition,
	\benr
	\cU^*(y,\tau,\mu_1,\mu_2)&=&\cU(y,\tau,\mu_1,\mu_2)+\mu\big(\|\tau\|_0^*-\|\tau^0\|_0^*\big),\nn
	\eenr
	where $\big|\|\tau\|_0^*-\|\tau^0\|_0^*\big|\le 1.$ Also in this case where $\tau^0<1,$ we have by assumption $\tau^0\wedge(1-\tau^0)\ge l_T.$ Thus when $u_T/l_T\to 0,$ then for any $\tau\in\G(u_T,0),$ we have that $\|\tau\|_0^*=\|\tau^0\|_0^*=1.$ The statement of part (ii) of this lemma is now immediate upon noting that the bound of Lemma \ref{lem:epunifsup} used to obtain the bound (\ref{eq:Updimbound}) holds uniformly over $\cG(u_T,0),$ which is a superset of $\cG(u_T,v_T).$ This completes the proof of Part (ii). The proof of Part (i), where $\tau^0=1$ is quite straightforward. Under the given choice of $\la_1$ and $\la_2$ for this case, we have from Theorem \ref{thm:unifmean} that,
	\benr\label{eq:checkmutau1}
	\|\check\mu_1-\mu_1^0\|\le c_u\si_{\vep}\Big\{\frac{s\log p\vee T}{Tl_T}\Big\}^{\frac{1}{2}}
	\eenr
	with probability at least $1-c_{u1}\exp\{-c_{u2}\log (p\vee T)\},$ and similar for $\check\mu_2.$ Since for this case by definition $\mu_2^0=\mu_1^0,$ this directly implies that
	\benr\label{eq:checkmudifftau1}
	\|\check\mu_1-\check\mu_2\|\le c_u\si_{\vep}\Big\{\frac{s\log p\vee T}{Tl_T}\Big\}^{\frac{1}{2}}
	\eenr
	with the same probability. Now proceeding similar to that in (\ref{eq:Updimbound}) we obtain,
	\benrr
	\cU(y,\tau,\check\mu_1,\check\mu_2)&=&\frac{1}{T}\big(\lfloor T\tau\rfloor-\lfloor T\tau^0\rfloor\big)\|\check\mu_1-\check\mu_2\|_2^2-\Big\|\frac{2}{T}\sum_{t=\lfloor T\tau^0\rfloor+1}^{\lfloor T\tau\rfloor}\vep_t^T\Big\|_{\iny}\big\|\check\mu_1-\check\mu_2\big\|_1\\
	&&+\frac{2}{T}\big(\lfloor T\tau\rfloor-\lfloor T\tau^0\rfloor\big)(\check\mu_2-\mu_2^0)^T(\check\mu_1-\check\mu_2)\ge - c_u\si_{\vep}^2\Big\{\frac{s\log (p\vee T)}{T l_T}\Big\},
	\eenrr
	with probability at least $1-c_{u1}\exp\{-c_{u2}\log (p\vee T)\}.$ Here the final inequality follows by an application of the Lemma \ref{lem:epunifsup} and the inequalities (\ref{eq:checkmutau1}) and (\ref{eq:checkmudifftau1}). The statement of Part (i) now follows since for any $\tau\in(0,1),$ we have $\|\tau\|_0^*=1.$  This finishes the proof of this lemma.
\end{proof}

\begin{proof}[Proof of Theorem \ref{thm:alg1cp}] We begin by proving Part (i) of this theorem, i.e., when $\tau^0=1.$ Note that we have by Part (i) of Lemma \ref{lem:lowerbpdim},
	\benr
	\inf_{\tau\in(0,1)}\cU^*(y,\tau,\check\mu_1,\check\mu_2)\ge\mu-c_u\si_{\vep}^2\Big\{\frac{s\log (p\vee T)}{T l_T}\Big\},\nn
	\eenr
	with probability at least $1-c_{u1}\exp\{-c_{u2}\log (p\vee T)\}.$ Now by choice of $\mu=c_u\si_{\vep}\xi\surd\big\{s\log (p\vee T)\big/T\big\},$ together with Condition A1, we have that $\inf_{\tau\in(0,1)}\cU^*(y,\tau,\check\mu_1,\check\mu_2)>0,$ thus implying that $\h\tau\notin(0,1).$ This leaves us with the only possibility that $\h\tau=1,$ with probability at least $1-c_{u1}\exp\{-c_{u2}\log (p\vee T)\}.$ This completes the proof of Part (i). We now proceed to the proof of Part (ii) of this theorem, i.e. for the case where $\tau^0<1.$ For this purpose, first note that using Part (ii) of Lemma \ref{lem:lowerbpdim} we have for $v_T>0$ that,
	\benrr
	\inf_{\tau\in\cG(1,v_T)}\cU^*(y,\tau,\check\mu_1,\check\mu_2)\ge c_u\xi_T^2\Big[v_T-c_{u1}\frac{\si_{\vep}}{\xi}\Big\{\frac{s\log (p\vee T)}{T}\Big\}^{\frac{1}{2}}-\frac{\mu}{\xi^2}\Big].
	\eenrr
	with probability at least $1-c_{u1}\exp\{-c_{u2}\log (p\vee T)\}.$ Upon choosing,
	\benr
	v_T=v_T^*\ge c_u\frac{\si_{\vep}}{\xi}\Big\{\frac{s\log (p\vee T)}{T}\Big\}^{\frac{1}{2}},\nn
	\eenr
	we obtain that $\inf_{\tau\in\cG(1,v_T)}\cU^*(y,\tau,\check\mu_1,\check\mu_2)>0,$ thus implying that $\h\tau\in\cG(v_T^*,0)$ with the same probability. Resetting $u_T=v_T^*$ and reapplying Part (ii) of Lemma \ref{lem:lowerbpdim} we obtain with probability at least $1-c_{u1}\exp\{-c_{u2}\log (p\vee T)\},$
	\benrr
	\inf_{\tau\in\cG(u_T,v_T)}\cU^*(y,\tau,\check\mu_1,\check\mu_2)\ge c_u\xi_T^2\Big[v_T-c_{u1}\frac{\si_{\vep}}{\xi}\Big\{u_T\frac{s\log (p\vee T)}{T}\Big\}^{\frac{1}{2}}\Big].\nn
	\eenrr
	Note that in this recursive step we have $F(u_T)=0,$ since by Condition A1 we have that $v_T^*/l_T\to 0.$ Now upon choosing
	\benr
	v_T=v_T^*\ge c_u\Big(\frac{\si_{\vep}}{\xi}\Big)^{1+\frac{1}{2}}\Big\{\frac{s\log (p\vee T)}{T}\Big\}^{\frac{1}{2}+\frac{1}{4}},\nn
	\eenr
	we obtain that $ \inf_{\tau\in\cG(u_T,v_T)}\cU^*(y,\tau,\check\mu_1,\check\mu_2)>0,$ consequently yielding $\h\tau\in\cG(v_T^*,0).$ Continuing these recursions by resetting $u_T$ to the bound of the previous recursion, we obtain for the $m^{th}$ recursion,
	\benr
	\big|\lfloor T\tau\rfloor-\lfloor T\tau^0\rfloor\big| \le c_uT\Big(\frac{\si_{\vep}}{\xi}\Big)^{b_m}\Big\{\frac{s\log (p\vee T)}{T}\Big\}^{a_m},\quad{\rm where}\quad a_m=\sum_{j=1}^{m}\frac{1}{2^j},\,\,\,  b_m=\sum_{j=0}^{m-1}\frac{1}{2^j} \nn
	\eenr
	Note that, despite the recursions in the above argument, the probability of the bound after every recursion is maintained to be at least  $1-c_{u1}\exp\{-c_{u2}\log (p\vee T)\}.$ This follows since by the same reasoning as discussed in the proof of Theorem \ref{thm:optimalapprox}. To finish the proof, note that upon continuing the above recursions an infinite number of times we obtain $a_\iny=\sum_{j=1}^{\iny}1/2^j=1,$ and $b_{\iny}=\sum_{j=0}^{\iny}1/2^j=2,$ thus yielding the statement of this theorem.
\end{proof}

\begin{proof}[Proof of Corollary \ref{cor:final}] The proof of this result is a direct consequence of Theorem \ref{thm:unifmean} and \ref{thm:alg1cp}. In particular, we have from Theorem \ref{thm:alg1cp},
	\benr
	\big|\lfloor T\tau\rfloor-\lfloor T\tau^0\rfloor\big| \le c_u\si_{\vep}^2\xi^{-2}s\log (p\vee T)\nn
	\eenr
	with probability at least  $1-c_{u1}\exp\{-c_{u2}\log (p\vee T)\},$ i.e. $\h\tau\in\cG(u_T,0),$ with $u_T=_u\si_{\vep}^2\xi^{-2}s\log (p\vee T)\big/T$ with the same probability. Using this bound together with the assumption $\tau^0\wedge(1-\tau^0)$ and Condition A1 also yields that $\h\tau\wedge(1-\h\tau)\ge c_ul_T$ with the same probability. The statement of this result now follows by an application of Theorem \ref{thm:unifmean} with the given choice of $u_T$ and an application of condition (\ref{asm:additional}).
\end{proof}

\section*{Appendix B: Stochastic bounds}
\begin{lem}\label{lem:sparselinearep} Suppose $\vep_t,$ $t=1,...,T$ are i.i.d r.v.'s satisfying Condition B for any $T\ge 1.$ Let $\cK(c_u^2s)=\{\delta\in\R^p;\,\,\|\delta\|_0\le c_u^2s;\,\,\|\delta\|_2=1\}$ be subset of $\R^p,$ for $s\ge 1.$ Then we have the following uniform bound.
	\benr
	\sup_{\delta\in\cK(c_u^2s)}\Big|\frac{1}{T}\sum_{t=1}^T\delta^T\vep_t\Big|\le c_u\si_{\vep}\Big\{\frac{s\log (p\vee T)}{T}\Big\}^{\frac{1}{2}}\nn
	\eenr
	with probability at least $1-c_{u1}\exp\big\{-c_{u2}\log (p\vee T)\big\}.$
\end{lem}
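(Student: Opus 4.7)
The plan is to prove this via a standard combination of subgaussian concentration, a covering/net argument on unit spheres of low-dimensional coordinate subspaces, and a union bound over sparse supports.

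First I would observe that for any fixed $\delta\in\R^p$ with $\|\delta\|_2=1$, the random variables $\delta^T\vep_t$ are i.i.d. $\sigma_\vep$-subgaussian by Condition B (together with the definition of a subgaussian vector given in the footnote in Section 1). Consequently $\frac{1}{T}\sum_{t=1}^T\delta^T\vep_t$ is subgaussian with variance proxy $\sigma_\vep^2/T$, yielding the basic tail bound
\benn
P\Bigl(\Bigl|\tfrac{1}{T}\sum_{t=1}^T\delta^T\vep_t\Bigr|>r\Bigr)\le 2\exp\bigl\{-Tr^2/(2\sigma_\vep^2)\bigr\}.
\eenn
This bound is fixed-$\delta$, so the real work is to make it uniform over $\cK(c_u^2 s)$.

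To discretize the supremum, I would fix any support set $S\subseteq\{1,\ldots,p\}$ with $|S|\le c_u^2 s$ and consider the restricted sphere $\cK_S=\{\delta\in\R^p:\mathrm{supp}(\delta)\subseteq S,\|\delta\|_2=1\}$, which is isometric to the unit sphere in $\R^{|S|}$. A standard volumetric argument yields a $1/4$-net $N_S$ of $\cK_S$ with $|N_S|\le 9^{|S|}\le 9^{c_u^2 s}$. By the usual net comparison lemma, for the linear functional $\delta\mapsto \frac{1}{T}\sum_t\delta^T\vep_t$ one has
\benn
\sup_{\delta\in\cK_S}\Bigl|\tfrac{1}{T}\sum_t\delta^T\vep_t\Bigr|\le 2\max_{\delta\in N_S}\Bigl|\tfrac{1}{T}\sum_t\delta^T\vep_t\Bigr|.
\eenn
Since $\cK(c_u^2 s)=\bigcup_S \cK_S$, the full supremum is controlled by the maximum over $S$ of the above, where $S$ ranges over at most $\binom{p}{c_u^2 s}\le(ep/(c_u^2 s))^{c_u^2 s}\le \exp\{c_u^2 s\log(ep)\}$ supports.

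Next I would union-bound the fixed-$\delta$ tail bound over all pairs $(S,\delta)$ with $\delta\in N_S$. The total cardinality is at most $\exp\{c_u^2 s\log 9+c_u^2 s\log(ep)\}\le\exp\{C s\log p\}$ for an appropriate constant $C$. Choosing $r=c_u\sigma_\vep\{s\log(p\vee T)/T\}^{1/2}$ with $c_u$ sufficiently large, the union-bounded tail probability becomes at most
\benn
2\exp\bigl\{Cs\log p-c_u^2 s\log(p\vee T)/2\bigr\}\le c_{u1}\exp\bigl\{-c_{u2}\log(p\vee T)\bigr\},
\eenn
where the last step uses $s\ge 1$ and that the $\log T$ inside $\log(p\vee T)$ absorbs $\log p$ in the low-dimensional regime $p\le T$. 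Combining with the net comparison yields the claimed uniform bound.

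The main technical point to get right is the bookkeeping of the two enumerations (nets within a support, and supports themselves): one must verify that the combined log-cardinality $Cs\log p$ is dominated by $c_u^2 s\log(p\vee T)$ once $c_u$ is chosen large enough, so that a genuine exponential decay remains after the union bound. This is not a deep obstacle but is the only place where the specific form of the probability $1-c_{u1}\exp\{-c_{u2}\log(p\vee T)\}$ is pinned down, and it is where the $\log(p\vee T)$ (rather than just $\log p$) enters the rate.
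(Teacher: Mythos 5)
Your proposal is correct and follows essentially the same route as the paper's proof, which likewise combines the fixed-$\delta$ subgaussian tail bound with a $9^{c_u^2 s}$-cardinality net on each restricted sphere $T_U$ (the paper uses a $1/3$-cover and the factor $3/2$ where you use a $1/4$-net and the factor $2$) and a union bound over the at most $p^{c_u^2 s}$ supports, finally choosing $\la \asymp \si_{\vep}\surd\{s\log(p\vee T)/T\}$ to dominate the combined entropy term $c_u s\log p$; the paper credits this argument to Lemma 15 of the supplement of \cite{loh2012}. The only quibble is your parenthetical that the low-dimensional regime $p\le T$ is needed for the final inequality: it is not, since $s\log p\le s\log(p\vee T)$ always, so a large enough constant in the choice of $r$ suffices unconditionally.
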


\begin{proof}[Proof of Lemma \ref{lem:sparselinearep}] The arguments of this proof are essentially adopted from Lemma 15 of the Supplementary materials of \cite{loh2012}. Consider any subset $U\subseteq\{1,...,p\},$ and define the set $T_U=\{\delta\in\R^p;\,\,\|\delta\|_2\le 1,\,\,{\rm Supp}(\delta)\subseteq U\}.$ Let $\cW=\{u_1,...,u_m\}$ be a $1/3$-cover of $T_U,$ i.e., for every $\delta\in T_U,$ there is some $u_i\in\cW$ such that $\|\Delta \delta\|_2\le 1/3,$ where $\Delta=\delta-u_i.$ Note that it is well known (see, page 94 of \cite{vaart1996weak}) that we can construct $\cW$ such that $|\cW|\le 9^{c_u^2s}.$ Now consider,
	\benr
	\sup_{\delta\in T_U}\big|\sum_{t=1}^T\vep_t^T\delta\big|\le \max_{i}\big|\sum_{t=1}^T\vep_t^Tu_i\big|+ \sup_{\delta\in T_U}\max_{i}\big|\sum_{t=1}^T\vep_t^T\Delta \delta\big|\nn
	\eenr
	By construction of $\cW,$ we also have that $3\Delta \delta\in T_U,$ hence it follows that,
	\benr
	\sup_{\delta\in T_U}\big|\sum_{t=1}^T\vep_t^T\delta\big|\le \max_{i}\big|\sum_{t=1}^T\vep_t^Tu_i\big|+ \frac{1}{3}\sup_{\delta\in T_U}\big|\sum_{t=1}^T\vep_t^T\delta\big|.\nn
	\eenr
	This implies $\sup_{\delta\in T_U}\big|\sum_{t=1}^T\vep_t^T\delta\big|\le (3/2)\max_{i}\big|\sum_{t=1}^T\vep_T^Tu_i\big|.$ Now applying the fundamental subgaussian bound (Lemma \ref{lem:fsubgbound}) for each $i$ and taking a union over all $i,$ we obtain for any $\la>0,$
	\benr
	pr\Big(\sup_{\delta\in T_U}\frac{1}{T}\big|\sum_{t=1}^T\vep_t^T\delta\big|\ge \la\Big)\le 9^{c_u^2s}2\exp\Big(-\frac{c_uT\la^2}{\si_{\vep}^2}\Big)\nn
	\eenr
	Finally upon noting that $\cK(c_u^2s)=\bigcup_{|U|\le c_u^2s} T_U$ and taking a union bound over ${p\choose \lfloor c_u^2 s\rfloor} \le p^{c_u^2s}$ choices of $U$ yields,
	\benr
	pr\Big(\sup_{\delta\in \cK(c_u^2s)}\frac{1}{T}\big|\sum_{t=1}^T\vep_t^T\delta\big|\ge \la\Big)\le 2\exp\Big(-\frac{c_uT\la^2}{\si_{\vep}^2}+c_us\log p\Big)\nn
	\eenr
	The statement of this lemma now follows upon choosing $\la=c_u\si_{\vep}\surd\big\{s\log (p\vee T)\big/T\big\},$ for an appropriately chosen $c_u>0.$
\end{proof}

\begin{lem}\label{lem:unifb.restrictedset}  Suppose $\vep_t,$ $t=1,...,T$ are i.i.d r.v.'s satisfying Condition B for any $T\ge 1.$ Let $\cA^*=\{\delta\in\R^p;\,\,\|\delta\|_1\le c_u\surd{s};\,\,\|\delta\|_2=1\}$ be subset of $\R^p,$ for $s\ge 1.$ Then we have the following uniform bound.
	\benr
	\sup_{\delta\in\cA}\Big|\frac{1}{T}\sum_{t=1}^T\delta^T\vep_t\Big|\le c_u\si_{\vep}\Big\{\frac{s\log (p\vee T)}{T}\Big\}^{\frac{1}{2}}\nn
	\eenr
	with probability at least $1-c_{u1}\exp\big\{-c_{u2}\log (p\vee T)\big\}.$
\end{lem}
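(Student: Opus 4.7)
\textbf{Proof proposal for Lemma \ref{lem:unifb.restrictedset}.} The plan is to avoid the $\varepsilon$-net construction used in the proof of Lemma B.1 and instead exploit $\ell_1$-$\ell_\infty$ duality, since membership in $\cA^*$ is characterized by the $\ell_1$-norm bound $\|\delta\|_1\le c_u\sqrt{s}$. The entire supremum collapses to controlling one $p$-dimensional vector, namely the coordinatewise empirical mean of the noise.

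First I would apply Hölder's inequality uniformly in $\delta\in\cA^*$ to obtain
\benr
\sup_{\delta\in\cA^*}\Big|\frac{1}{T}\sum_{t=1}^T\delta^T\vep_t\Big|
\le \Big(\sup_{\delta\in\cA^*}\|\delta\|_1\Big)\Big\|\frac{1}{T}\sum_{t=1}^T\vep_t\Big\|_{\iny}
\le c_u\sqrt{s}\,\Big\|\frac{1}{T}\sum_{t=1}^T\vep_t\Big\|_{\iny}.\nn
\eenr
This reduces the problem to producing a high-probability bound on $\|T^{-1}\sum_{t=1}^T\vep_t\|_{\iny}$.

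Next I would bound the sup-norm by a union bound over coordinates. For each fixed $j\in\{1,\dots,p\}$, the coordinates $\vep_{tj}$ are i.i.d.\ mean-zero and $c_u\si_{\vep}$-subgaussian by Condition B (since $\vep_t$ is $\si_{\vep}$-subgaussian as a vector, inner products with the standard basis vectors inherit the proxy). The fundamental subgaussian tail bound (Lemma B.3 invoked in the proof of Lemma B.1) gives, for any $\la>0$,
\benr
\mathrm{pr}\Big(\Big|\frac{1}{T}\sum_{t=1}^T\vep_{tj}\Big|\ge \la\Big)\le 2\exp\Big(-\frac{c_uT\la^2}{\si_{\vep}^2}\Big).\nn
\eenr
Taking a union over $j=1,\dots,p$ and choosing $\la=c_u\si_{\vep}\{\log(p\vee T)/T\}^{1/2}$ with $c_u$ sufficiently large yields $\|T^{-1}\sum_t\vep_t\|_{\iny}\le c_u\si_{\vep}\{\log(p\vee T)/T\}^{1/2}$ with probability at least $1-c_{u1}\exp\{-c_{u2}\log(p\vee T)\}$.

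Combining the two displays gives the stated bound $c_u\si_{\vep}\{s\log(p\vee T)/T\}^{1/2}$ with the required probability. There is no genuine obstacle here: unlike Lemma B.1, whose supremum runs over a genuinely $s$-dimensional set of sparse directions and therefore requires an $\varepsilon$-net of cardinality $9^{c_u^2 s}\cdot p^{c_u^2 s}$, the set $\cA^*$ is bounded in $\ell_1$, so Hölder's inequality linearizes the problem and reduces it to a single $p$-fold union bound. The constraint $\|\delta\|_2=1$ in the definition of $\cA^*$ is not needed for the argument and is harmless; it is present because $\cA^*$ is meant to be applied to normalized deviations such as $(\h\mu_q-\mu_q^0)/\|\h\mu_q-\mu_q^0\|_2$ for $q=1,2$, which lie in $\cA$ (from Condition C) and hence, after the usual cone inequality $\|\delta\|_1\le (1+c_u)\sqrt{s}\|\delta\|_2$, in $\cA^*$ after normalization.
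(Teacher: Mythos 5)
Your proof is correct, but it takes a genuinely different and more elementary route than the paper's. The paper establishes this bound by combining Lemma \ref{lem:sparselinearep} --- a supremum over $c_u^2s$-sparse unit vectors controlled through a $9^{c_u^2s}\cdot p^{c_u^2s}$-point $\varepsilon$-net --- with the Loh--Wainwright containment Lemma \ref{lem:lwtopo}, which embeds $\cA^*\subseteq\cB_1(c_u\surd s)\cap\cB_2(1)$ into $3\,{\rm conv}\big\{\cB_0(c_u^2s)\cap\cB_2(1)\big\}$, so that the supremum of the linear functional over $\cA^*$ is attained, up to the factor $3$, at sparse extreme points. You instead decouple directly with H\"older's inequality, $|\delta^T v|\le\|\delta\|_1\|v\|_{\iny}\le c_u\surd s\,\|v\|_{\iny}$ with $v=T^{-1}\sum_t\vep_t$, and then control $\|v\|_{\iny}$ by a single union bound over the $p$ coordinates --- essentially the same computation the paper performs separately in Lemma \ref{lem:epunifsup}. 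Both routes land on the same rate $\si_{\vep}\surd\{s\log(p\vee T)/T\}$, because for a \emph{linear} functional the supremum over the $\ell_1$ ball of radius $c_u\surd s$ and the supremum over the convex hull of $s$-sparse unit vectors are of the same order; your argument is shorter and loses nothing here. What the paper's heavier machinery buys is reusability: the net bound of Lemma \ref{lem:sparselinearep} and the convex-hull containment are the tools that remain sharp when the functional is quadratic (restricted-eigenvalue-type statements), where a naive H\"older step would be lossy, and the paper already has that machinery on hand. Your closing observation about the $\|\delta\|_2=1$ constraint being inactive in the bound, and present only so that $\cA^*$ receives the normalized deviations $(\h\mu_q-\mu_q^0)/\|\h\mu_q-\mu_q^0\|_2$ produced by the cone condition of Condition C, is also consistent with how the lemma is actually invoked in the proof of Lemma \ref{lem:stocb}.
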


\begin{proof}[Proof of Lemma \ref{lem:unifb.restrictedset}]
	The arguments of this proof are essentially adopted from Lemma 12 of the Supplementary materials of \cite{loh2012}. Consider the collection $\cK(c_u^2s)=\cB_0(c_u^2s)\cap\cB_2(1),$ also defined in Lemma \ref{lem:sparselinearep}, then by Lemma \ref{lem:sparselinearep} we have that,
	\benr
	\sup_{\delta\in\cK(c_u^2s)}\Big|\frac{1}{T}\sum_{t=1}^T\delta^T\vep_t\Big|\le c_u\si_{\vep}\Big\{\frac{s\log (p\vee T)}{T}\Big\}^{\frac{1}{2}}\nn
	\eenr
	with probability at least $1-c_{u1}\exp\big\{-c_{u2}\log (p\vee T)\big\}.$ Now, by Lemma \ref{lem:lwtopo}, the desired bound over the collection $\cA^*,$ can be reduced to proving the same bound for all vectors $\delta\in 3\,{\rm conv}\big\{\cK(c_u^2s)\big\}.$ Consider any linear combination $\delta=\sum_i\alpha_i\delta_i,$ with $\al_i\ge 0,$ such that $\sum_i\alpha_i=1,$ and that $\|\delta_i\|_0\le c_u^2s$ and $\|\delta_i\|_2\le 3,$ for each $i.$ Then,
	\benr
	\frac{1}{T}\sup_{\delta\in\cA^*}\big|\sum_{t=1}^T\delta^T\vep_t\big|\le 3\sum_{i}\al_i\sup_{\delta_i\in\cK(c_u^2s)}\frac{1}{T}\big|\sum_{t=1}^T\delta_i^T\vep_t\big|\le c_u\si_{\vep}\Big\{\frac{s\log (p\vee T)}{T}\Big\}^{\frac{1}{2}}\nn
	\eenr
	with probability at least $1-c_{u1}\exp\big\{-c_{u2}\log (p\vee T)\big\}.$
\end{proof}

\begin{lem}\label{lem:stocb} Let $\h z,$ $\h\theta_2$ be as defined in Section \ref{sec:intro} and $\cG$ be as defined in (\ref{def:setcG}). Suppose Condition B and C hold and let $u_T$ be any non-negative sequence, then for any $0<\gamma<1,$ there exists $c_u>0$ such that,
	\benr
	\sup_{\substack{\tau\in\cG(u_T,0)\\ \tau\ge\tau^0}}\frac{1}{T}\Big|\sum_{t=\lfloor T\tau^0\rfloor+1}^{\lfloor T\tau\rfloor} \h\eta^T\vep_t\Big|\le c_u\si_{\vep}\Big\{\xi\Big(\frac{u_T}{T}\Big)^{\frac{1}{2}}+\Big(\frac{u_T}{T}\Big)^{\frac{1}{2}}\big(\frac{s\log (p\vee T)}{\surd Tl_T}\Big)\Big\},\nn
	\eenr
	with probability at least $1-\gamma-\Delta_T-c_{u1}\exp\big\{-c_{u2}\log (p\vee T)\big\}.$
\end{lem}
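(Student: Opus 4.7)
The plan is to decompose the plug-in direction as $\hat\eta^T \vep_t = \eta^{0T}\vep_t + (\hat\eta-\eta^0)^T\vep_t$ and apply the triangle inequality, giving
\begin{equation*}
\Big|\sum_{t=\lfloor T\tau^0\rfloor+1}^{\lfloor T\tau\rfloor} \hat\eta^T \vep_t\Big| \le \Big|\sum_{t=\lfloor T\tau^0\rfloor+1}^{\lfloor T\tau\rfloor} \eta^{0T}\vep_t\Big| + \|\hat\eta-\eta^0\|_2\sup_{\delta\in\cA,\,\|\delta\|_2=1} \Big|\sum_{t=\lfloor T\tau^0\rfloor+1}^{\lfloor T\tau\rfloor} \delta^T \vep_t\Big|.
\end{equation*}
This separates an oracle-noise term, which carries the scale $\xi$ of the true jump, from a plug-in-error term, which will carry the slower rate $s\log(p\vee T)/\sqrt{Tl_T}$ coming from Condition C.

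For the oracle-noise term, the summands $\eta^{0T}\vep_t$ are i.i.d.\ mean-zero subgaussian scalars with variance proxy $\xi^2\si_{\vep}^2$ by Condition B. I would treat the partial sums as a martingale indexed by the right end-point $\lfloor T\tau\rfloor$ and apply a Doob/H\'ajek-R\'enyi-type maximal inequality for subgaussian partial sums over the at most $\lfloor Tu_T\rfloor$ admissible end-points, yielding $c_u\xi\si_{\vep}(u_T/T)^{1/2}$ with probability at least $1-\gamma$. This matches the first summand of the target.

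For the plug-in-error term, Condition C supplies $\|\hat\eta-\eta^0\|_2 \le c_u\si_{\vep}\sqrt{s\log(p\vee T)/(Tl_T)}$ and $\hat\mu_q-\mu_q^0 \in \cA$ on an event of probability $1-\Delta_T$; since $\cA$ is a cone, the normalised difference $(\hat\eta-\eta^0)/\|\hat\eta-\eta^0\|_2$ again lies in $\cA\cap\{\|\cdot\|_2=1\}$, which justifies passing to the restricted supremum. I would then apply the covering argument behind Lemma \ref{lem:unifb.restrictedset} (reduction to sparse vectors via Lemma \ref{lem:lwtopo}, a $1/3$-net of the sparse unit ball, and the fundamental subgaussian bound) separately to each partial sum indexed by $\lfloor T\tau\rfloor-\lfloor T\tau^0\rfloor \in \{1,\dots,\lfloor Tu_T\rfloor\}$, and combine via a union bound. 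The resulting uniform control is $c_u\si_{\vep}(u_T/T)^{1/2}\sqrt{s\log(p\vee T)}$ with probability at least $1-c_{u1}\exp\{-c_{u2}\log(p\vee T)\}$, the $\log T$ price of the union bound being absorbed into the $\log(p\vee T)$ already sitting in the covering exponent. Multiplying by the Condition C bound on $\|\hat\eta-\eta^0\|_2$ produces the second summand of the target, with one surplus factor of $\si_{\vep}$ absorbed into the universal constant via $\si_{\vep}\le c_u$ (Condition B).

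The main technical obstacle is the joint uniformity over $\delta\in\cA\cap\{\|\cdot\|_2=1\}$ and the interval end-point in the plug-in-error step, since Lemma \ref{lem:unifb.restrictedset} is stated only for a full-length sum. The cone structure of $\cA$ and the sparse-approximation scheme underpinning that lemma are invariant under restriction to any subinterval, so a single $1/3$-net serves every end-point simultaneously; the only additional price is a union-bound factor of order $Tu_T$, which is dominated by the subgaussian tail after a routine adjustment of the constants in the exponent of $\log(p\vee T)$. A final union bound across the three events (Condition C, the maximal inequality for the oracle sum, and the uniform empirical-process bound) delivers the stated $1-\gamma-\Delta_T-c_{u1}\exp\{-c_{u2}\log(p\vee T)\}$ probability guarantee.
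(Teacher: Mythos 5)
Your decomposition into an oracle-noise term and a plug-in-error term, the use of the fundamental subgaussian bound (Lemma~\ref{lem:fsubgbound}) for the former, the restricted-set empirical process bound (Lemma~\ref{lem:unifb.restrictedset}, built from Lemmas~\ref{lem:sparselinearep} and \ref{lem:lwtopo}) for the latter, the absorption of the surplus $\si_{\vep}$ via $\si_{\vep}\le c_u$, and the final union over the three events are all exactly the paper's argument. Your handling of uniformity over the interval end-point is in fact more explicit than the paper's, which simply applies the pointwise bounds and then substitutes $\lfloor T\tau\rfloor-\lfloor T\tau^0\rfloor\le Tu_T$ inside the supremum.

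The one step that does not go through as written is the claim that, because $\cA$ is a cone and $\h\mu_q-\mu_q^0\in\cA$ for $q=1,2,$ the normalised vector $(\h\eta-\eta^0)\big/\|\h\eta-\eta^0\|_2$ lies in $\cA\cap\{\|\cdot\|_2=1\}$. The set $\cA=\big\{\delta:\|\delta_{S^c}\|_1\le c_u\|\delta_S\|_1\big\}$ is closed under positive scaling but not under differences: cancellation on $S$ can make $\|(\h\eta-\eta^0)_S\|_1$ arbitrarily small relative to $\|(\h\eta-\eta^0)_{S^c}\|_1,$ so $\h\eta-\eta^0=(\h\mu_1-\mu_1^0)-(\h\mu_2-\mu_2^0)$ need not satisfy the cone constraint, and your restricted supremum need not dominate the normalised difference. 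The repair is the one the paper uses: split $\sum(\h\eta-\eta^0)^T\vep_t$ by the triangle inequality into $\sum(\h\mu_1-\mu_1^0)^T\vep_t$ and $\sum(\h\mu_2-\mu_2^0)^T\vep_t,$ each of whose normalised directions does lie in the set $\cA^*$ of Lemma~\ref{lem:unifb.restrictedset} by Condition C, apply that lemma to each piece separately, and multiply by the individual Condition C bounds on $\|\h\mu_q-\mu_q^0\|_2.$ This costs only a factor of two and leaves your stated rate, and hence the lemma, unchanged.
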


\begin{proof}[Proof of Lemma \ref{lem:stocb}]
	For any $\tau\in \cG(u_T,0),$ $\tau\ge \tau^0$ we have,
	\benr
	\frac{1}{T}\Big|\sum_{t=\lfloor T\tau^0\rfloor+1}^{\lfloor T\tau\rfloor} \h\eta^T\vep_t\Big|\le  \frac{1}{T}\Big|\sum_{t=\lfloor T\tau^0\rfloor+1}^{\lfloor T\tau\rfloor} \eta^{0T}\vep_t\Big|+ \frac{1}{T}\Big|\sum_{t=\lfloor T\tau^0\rfloor+1}^{\lfloor T\tau\rfloor} (\h\eta-\eta^0)^T\vep_t\Big|=R1+R2\nn
	\eenr	
	Using the fundamental subgaussian bound of Lemma \ref{lem:fsubgbound} we obtain that $R1\le c_u\xi\si_{\vep}\surd{\big(\lfloor T\tau\rfloor-\lfloor T\tau^0\rfloor\big)}\big/T,$ for some $c_u>0,$ with probability at least $1-\gamma.$ On the set $\cG(u_T,0),$ we also have that $\big(\lfloor T\tau\rfloor -\lfloor T\tau^0\rfloor\big)\le T u_T,$ thus,
	\benr\label{eq:in2}
	\sup_{\substack{\tau\in\cG(u_T,0)\\ \tau\ge\tau^0}}R1\le c_u\xi\si_{\vep}\Big(\frac{u_T}{T}\Big)^{\frac{1}{2}}
	\eenr	
	with probability at least $1-\gamma.$ Next consider term $R2,$
	\benr\label{eq:in1}
	\frac{1}{T}\Big|\sum_{t=\lfloor T\tau^0\rfloor+1}^{\lfloor T\tau\rfloor} (\h\eta-\eta^0)^T\vep_t\Big|\le \frac{1}{T}\Big|\sum_{t=\lfloor T\tau^0\rfloor+1}^{\lfloor T\tau\rfloor} (\h\mu_1-\mu_1^0)^T\vep_t\Big|+\frac{1}{T}\Big|\sum_{t=\lfloor T\tau^0\rfloor+1}^{\lfloor T\tau\rfloor} (\h\mu_2-\mu_2^0)^T\vep_t\Big|
	\eenr
	By Condition C we have that $(\h\mu_1-\mu_1^0)\in\cA,$ which directly implies that $\delta=(\h\mu_1-\mu_1^0)\big/\|\h\mu_1-\mu_1^0\|_2\in\cA^*,$ where $\cA^*$ is defined in Lemma \ref{lem:unifb.restrictedset}. Thus an application of Lemma \ref{lem:unifb.restrictedset} provides a the following bound on the first term in the rhs of (\ref{eq:in1}).
	\benr
	\frac{1}{T}\Big|\sum_{t=\lfloor T\tau^0\rfloor+1}^{\lfloor T\tau\rfloor} (\h\mu_1-\mu_1^0)^T\vep_t\Big|\le c_u\|\h\mu_1-\mu_1^0\|_2\si_{\vep}\surd(s \log (p\vee T))\frac{\surd(\lfloor T\tau\rfloor -\lfloor T\tau^0\rfloor)}{T},\nn
	\eenr
	that holds with probability at least $1-c_{u1}\exp\big\{-c_{u2}\log (p\vee T)\big\}.$ The same bound argument also applies to the second term in the rhs of (\ref{eq:in1}). Finally, using the rate assumption of Condition C and the inequality $\big(\lfloor T\tau\rfloor -\lfloor T\tau^0\rfloor\big)\le T u_T,$ on the set $\cG(u_T,0)$ we obtain that,
	\benr\label{eq:in3}
	\sup_{\substack{\tau\in\cG(u_T,0)\\ \tau\ge\tau^0}}\frac{1}{T}\Big|\sum_{t=\lfloor T\tau^0\rfloor+1}^{\lfloor T\tau\rfloor} (\h\eta-\eta^0)^T\vep_t\Big|\le c_u\si_{\vep}\Big(\frac{u_T}{T}\Big)^{\frac{1}{2}}\Big(\frac{s\log (p\vee T)}{\surd Tl_T}\Big),
	\eenr
	with probability at least $1-\Delta_T-c_{u1}\exp\big\{-c_{u2}\log (p\vee T)\big\}.$ The statement of this lemma follows by combining the bounds (\ref{eq:in2}) and (\ref{eq:in3}).
\end{proof}


\begin{lem}\label{lem:epunifsup}  Suppose $\vep_t,$ $t=1,...,T$ are i.i.d r.v.'s satisfying Condition B for any $T\ge 1.$ Then,
	\benr
	\sup_{\substack{\tau\in\cG(u_T,0)\\\tau\ge \tau^0}}\frac{1}{T}\Big\|\sum_{t=\lfloor T\tau^0\rfloor+1}^{\lfloor T\tau\rfloor}\vep_t\Big\|_{\iny}\le c_u\si_{\vep}\Big\{\frac{u_T\log (p\vee T)}{T}\Big\}^{\frac{1}{2}},
	\eenr	
	with probability at least $1-c_{u1}\exp\big\{-c_{u2}\log (p\vee T)\big\}.$
\end{lem}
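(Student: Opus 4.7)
The plan is to reduce the supremum over $\tau$ to a discrete maximum and then apply the fundamental subgaussian tail bound together with a union bound. First, observe that $\tau \mapsto \lfloor T\tau \rfloor$ is a step function, so on the set $\{\tau \in \cG(u_T, 0): \tau \ge \tau^0\}$ the partial sum $\sum_{t=\lfloor T\tau^0\rfloor+1}^{\lfloor T\tau\rfloor}\vep_t$ takes only finitely many values, indexed by integers $k \in \{k_0+1, k_0+2, \dots, k_0+m\}$, where $k_0 := \lfloor T\tau^0\rfloor$ and $m := \lfloor Tu_T \rfloor$. Hence it suffices to bound
\benn
\max_{1 \le k-k_0 \le m}\ \max_{1 \le j \le p}\ \frac{1}{T}\Big|\sum_{t=k_0+1}^{k} \vep_{tj}\Big|.
\eenn

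For each fixed pair $(k, j)$, the increments $\vep_{tj}$ are i.i.d.\ zero-mean and $\si_\vep$-subgaussian by Condition B, so $S_{k,j} := \sum_{t=k_0+1}^{k}\vep_{tj}$ is $\si_\vep \sqrt{k-k_0}$-subgaussian. Applying the fundamental subgaussian bound (Lemma \ref{lem:fsubgbound}) together with the crude uniform estimate $k - k_0 \le m \le Tu_T$ gives, for any $\la > 0$,
\benn
pr\big(|S_{k,j}| \ge \la \big) \le 2\exp\Big(-\frac{\la^2}{2\si_\vep^2 (k - k_0)}\Big) \le 2\exp\Big(-\frac{\la^2}{2\si_\vep^2 T u_T}\Big).
\eenn

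Next I would choose $\la = c_u \si_\vep \surd\{Tu_T \log (p \vee T)\}$ and union-bound over the at most $pT$ pairs $(k, j)$ with $k - k_0 \le m \le T$ and $j \le p$. Since $pT \le (p \vee T)^2$, the resulting tail probability is at most $2(p \vee T)^2\exp\{-(c_u^2/2)\log(p\vee T)\}$, which for $c_u$ sufficiently large collapses to $c_{u1}\exp\{-c_{u2}\log(p \vee T)\}$. Dividing through by $T$ converts $\la/T$ into $c_u \si_\vep \surd\{u_T \log(p\vee T)/T\}$, matching the stated bound.

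There is essentially no obstacle here — the argument is a standard maximal inequality for subgaussian partial sums. The only mild subtlety is that I bound the per-index variance $\si_\vep^2 (k-k_0)$ by its worst case $\si_\vep^2 T u_T$ uniformly across $k$, which is slightly wasteful but harmless since the logarithmic factor absorbs the union over $k$; any sharper chaining argument would only improve the constants, not the rate.
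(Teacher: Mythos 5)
Your argument is correct and follows essentially the same route as the paper: apply the fundamental subgaussian tail bound to the coordinate partial sums, bound $\lfloor T\tau\rfloor-\lfloor T\tau^0\rfloor$ by $Tu_T$ on $\cG(u_T,0)$, and union-bound, with the $\log(p\vee T)$ factor absorbing the multiplicity. If anything you are slightly more careful than the paper's own proof, which only makes the union over the $p$ coordinates explicit and leaves the union over the at most $T$ values of $\lfloor T\tau\rfloor$ implicit; your explicit count of $pT\le(p\vee T)^2$ pairs closes that small gap.
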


\begin{proof}[Proof of Lemma \ref{lem:epunifsup}] Let $\delta_j\in\R^p$ be the unit vector in the $j^{th}$ direction, i.e., $\delta_{jk}=1,$ $k=j$ and $\delta_{jk}=0,$ $k\ne j.$ Then applying the fundamental subgaussian bound of Lemma \ref{lem:fsubgbound} we obtain,
	\benr
	\frac{1}{T}\Big\|\sum_{t=\lfloor T\tau^0\rfloor+1}^{\lfloor T\tau\rfloor}\delta_j^T\vep_t\Big\|_{\iny}\le c_u\si_{\vep}\surd(\log (p\vee T))\frac{\surd(\lfloor T\tau\rfloor -\lfloor T\tau^0\rfloor)}{T}\nn
	\eenr
	with probability at least $1-c_{u1}\exp\big\{-c_{u2}\log (p\vee T)\big\}.$ Taking a union bound over $j=1,...,p$ yields
	\benr
	\frac{1}{T}\Big\|\sum_{t=\lfloor T\tau^0\rfloor+1}^{\lfloor T\tau\rfloor}\vep_t\Big\|_{\iny}\le \max_j 	\frac{1}{T}\Big\|\sum_{t=\lfloor T\tau^0\rfloor+1}^{\lfloor T\tau\rfloor}\delta_j^T\vep_t\Big\|_{\iny}\le c_u\si_{\vep}\surd(\log (p\vee T))\frac{\surd(\lfloor T\tau\rfloor -\lfloor T\tau^0\rfloor)}{T}\nn
	\eenr
	with probability at least $1-c_{u1}\exp\big\{-c_{u2}\log (p\vee T)\big\}.$ Finally using the relation $\big(\lfloor T\tau\rfloor -\lfloor T\tau^0\rfloor\big)\le T u_T,$ on the set $\cG(u_T,0)$ we obtain that,
	\benr
	\sup_{\substack{\tau\in\cG(u_T,0)\\\tau\ge \tau^0}}\frac{1}{T}\Big\|\sum_{t=\lfloor T\tau^0\rfloor+1}^{\lfloor T\tau\rfloor}\vep_t\Big\|_{\iny}\le  c_u\si_{\vep}\Big\{\frac{u_T\log (p\vee T)}{T}\Big\}^{\frac{1}{2}},\nn
	\eenr	
	with probability at least $1-c_{u1}\exp\big\{-c_{u2}\log (p\vee T)\big\}.$
\end{proof}

\begin{lem}\label{lem:yclowerb} Suppose Condition B and let $\bar y_{(0:\tau]}$ and $\bar y_{(\tau:1]}$ be as defined in (\ref{def:empmeans}) and assume that $Tl_T\ge c_u,$ for an appropriately chosen $c_u.$ Additionally let $\|\mu_1-\mu_2\|_{\iny}\le\xi_{\iny},$ then,\\
	(i) when $\tau^0=1$ we have,
	\benr
	\sup_{\substack {\tau\in(0,1)\\ \tau\wedge(1-\tau)\ge c_ul_T}}\big\|\bar y_{(0:\tau]}-\mu_1^0\big\|_{\iny}\le c_u\si_{\vep}\Big\{\frac{\log (p\vee T)}{Tl_T}\Big\}^{\frac{1}{2}}\nn
	\eenr
	with probability at least $1-c_{u1}\exp\big\{-c_{u2}\log (p\vee T)\big\}.$\\
	(ii) when $\tau^0<1$ we have for any non-negative $u_T,$
	\benr
	\sup_{\substack{\tau\in\cG(u_T,0)\\ \tau\wedge(1-\tau)\ge c_ul_T}}\big\|\bar y_{(0:\tau]}-\mu_1^0\big\|_{\iny}\le c_u\max\Big[\si_{\vep}\Big\{\frac{\log (p\vee T)}{Tl_T}\Big\}^{\frac{1}{2}},\,\,\frac{u_T\xi_{\iny}}{l_T}\Big]\nn
	\eenr
	with probability at least $1-c_{u1}\exp\big\{-c_{u2}\log (p\vee T)\big\}.$ The same uniform upper bounds also hold for $\big\|\bar y_{(\tau:1]}-\mu_2^0\big\|_{\iny},$ where for the case $\tau^0=1,$ define $\mu_2^0=\mu_1^0.$
\end{lem}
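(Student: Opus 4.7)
My plan is to decompose $\bar y_{(0:\tau]}-\mu_1^0$ into a deterministic bias term (which vanishes in Part (i)) plus a stochastic fluctuation term, then control the stochastic term by the fundamental subgaussian bound (Lemma \ref{lem:fsubgbound}) together with union bounds over coordinates and over the (at most $T$) integer values $k=\lfloor T\tau\rfloor$. The key structural observation enabling the uniformity is that $\bar y_{(0:\tau]}$ only changes as $\lfloor T\tau\rfloor$ varies, so the supremum over a continuous range of $\tau$ collapses to a maximum over at most $T$ integer indices.

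For Part (i), since $\tau^0=1$ we have $y_t=\mu_1^0+\vep_t$ for all $t$ and hence $\bar y_{(0:\tau]}-\mu_1^0=(\lfloor T\tau\rfloor)^{-1}\sum_{t=1}^{\lfloor T\tau\rfloor}\vep_t$. For each integer $k\ge c_u T l_T$ and each coordinate $j\in\{1,\ldots,p\}$, Lemma \ref{lem:fsubgbound} applied to the i.i.d.\ subgaussian variables $\vep_{tj}$ yields $|k^{-1}\sum_{t=1}^k\vep_{tj}|\le c_u\sigma_\vep\{\log(p\vee T)/k\}^{1/2}$ with probability at least $1-c_{u1}\exp\{-c_{u2}\log(p\vee T)\}$. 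A union bound over the at most $p\,T\le (p\vee T)^2$ pairs $(j,k)$ (absorbed into the constants) together with the lower bound $k\ge c_u T l_T$ then gives the claim.

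For Part (ii), assume without loss of generality that $\tau\ge\tau^0$ (the case $\tau<\tau^0$ is strictly easier, since no bias is introduced). A direct algebraic expansion gives
\benr
\bar y_{(0:\tau]}-\mu_1^0=\frac{\lfloor T\tau\rfloor-\lfloor T\tau^0\rfloor}{\lfloor T\tau\rfloor}(\mu_2^0-\mu_1^0)+\frac{1}{\lfloor T\tau\rfloor}\sum_{t=1}^{\lfloor T\tau\rfloor}\vep_t.\nn
\eenr
The sup-norm of the first (bias) term is at most $(Tu_T)/(\lfloor T\tau\rfloor)\cdot\xi_{\iny}\le c_u u_T\xi_{\iny}/l_T$ using $\tau\in\cG(u_T,0)$ for the numerator and $\tau\ge c_u l_T$ for the denominator. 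The second (stochastic) term is controlled uniformly by exactly the argument of Part (i). Taking the maximum of the two bounds yields the stated rate.

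The calculation is essentially routine bookkeeping; the only subtle point is the union-bound budget, which must simultaneously absorb the $p$ coordinates and the $O(T)$ integer grid of $\lfloor T\tau\rfloor$ values. Since each marginal deviation probability is $\exp\{-c\log(p\vee T)\}$, a logarithmic factor in $T$ in the grid size is cheaply absorbed by adjusting the constant in the subgaussian threshold, so no delicate chaining argument is required. Finally, the symmetric bounds for $\|\bar y_{(\tau:1]}-\mu_2^0\|_{\iny}$ follow by running the identical argument on the right-hand partition, with the roles of $\mu_1^0$ and $\mu_2^0$ interchanged (and using $\tau<\tau^0$ versus $\tau\ge\tau^0$ symmetrically), where the convention $\mu_2^0=\mu_1^0$ in the $\tau^0=1$ case makes the bias again vanish.
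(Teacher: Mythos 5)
Your proof is correct and follows essentially the same route as the paper's: the identical bias-plus-noise decomposition of $\bar y_{(0:\tau]}-\mu_1^0$, with the stochastic term handled by the fundamental subgaussian bound plus a union over coordinates and the bias term bounded via $|\lfloor T\tau\rfloor-\lfloor T\tau^0\rfloor|\le Tu_T$ and $\lfloor T\tau\rfloor\ge c_uTl_T$. If anything, you are slightly more careful than the paper in making the union bound over the $O(T)$ integer values of $\lfloor T\tau\rfloor$ explicit, which the paper's proof leaves implicit when passing to the supremum.
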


\begin{proof} We begin by proving Part (i) of this lemma. When $\tau^0=1,$ note that, $\big(\bar y_{(0:\tau]}-\mu_1^0\big)=\sum_{t=1}^{\lfloor T\tau\rfloor}\vep_t\big/\lfloor T\tau\rfloor.$ Thus applying the fundamental subgaussian bound of Lemma \ref{lem:fsubgbound} together with a union over $p$ projections (as done in the proof of Lemma \ref{lem:epunifsup}) we have,
	\benr
	\big\|\bar y_{(0:\tau]}-\mu_1^0\big\|_{\iny}\le c_u\si_{\vep}\Big\{\frac{\log (p\vee T)}{\lfloor T\tau\rfloor}\Big\}^{\frac{1}{2}}\nn
	\eenr
	with probability at least $1-c_{u1}\exp\big\{-c_{u2}\log (p\vee T)\big\}.$ The uniform bound of Part (i) follows by using the restriction $\tau\wedge(1-\tau)\ge c_ul_T,$ and $Tl_T\ge c_u.$ Next we proceed to the proof of Part (ii). Note that for any $\tau\in(0,1),$
	\benr
	\big\|\bar y_{(0:\tau]}-\mu_1^0\big\|_{\iny}\le \frac{1}{\lfloor T\tau\rfloor}\big\|\sum_{t=1}^{\lfloor T\tau\rfloor}\vep_t\big\|_{\iny}+\frac{\big|\lfloor T\tau\rfloor-\lfloor T\tau^0\rfloor\big|}{\lfloor T\tau\rfloor}\big\|\mu_1^0-\mu_2^0\big\|_{\iny}=R1+R2\nn
	\eenr
	By arguments used to prove Part (i) we have that,
	\benr
	\sup_{\substack {\tau\in(0,1)\\ \tau\wedge(1-\tau)\ge c_ul_T}} R1\le c_u\si_{\vep}\Big\{\frac{\log (p\vee T)}{\lfloor T\tau\rfloor}\Big\}^{\frac{1}{2}}
	\eenr
	with probability at least $1-c_{u1}\exp\big\{-c_{u2}\log (p\vee T)\big\}.$ To uniformly bound $R2,$ first note that $\|\mu_1^0-\mu_2^0\big\|_{\iny}\le \|\mu_1^0-\mu_2^0\|_2.$ Using this inequality together with the restrictions $\big|\lfloor T\tau\rfloor-\lfloor T\tau^0\rfloor\big|\le Tu_T$ that holds on the set $\cG(u_T,0),$ and $\tau\wedge(1-\tau)\ge c_ul_T,$ we obtain that
	\benr
	\sup_{\substack{\tau\in\cG(u_T,0)\\ \tau\wedge(1-\tau)\ge c_ul_T}} R2\le c_u\frac{u_T\xi_{\iny}}{l_T}\nn
	\eenr
	The statement of Part (ii) of this lemma follows by combining these uniform bounds for $R1$ and $R2.$
\end{proof}

\section*{Appendix C: Auxiliary results}

The following lemma is the fundamental subgaussian tail bound, and has been reproduced from Lemma 1.3 of \cite{rigollet201518}.
\begin{lem}\label{lem:fsubgbound} Let $X$ be any subgaussian$(\si^2)$ random variable. Then for any $t>0,$ it holds
	\benr
	pr\big(|X|>t\big)\le \exp\Big(-\frac{t^2}{2\si^2}\Big)\nn
	\eenr
\end{lem}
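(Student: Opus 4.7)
The plan is the standard Chernoff (exponential Markov) argument, which is essentially forced by the definition of a subgaussian random variable. First I would observe that by the subgaussian assumption, $E[\exp(sX)] \le \exp(s^2\sigma^2/2)$ for every $s \in \mathbb{R}$. For any $s > 0$ and $t > 0$, Markov's inequality applied to the non-negative random variable $\exp(sX)$ yields
\benr
pr(X > t) \;=\; pr\big(\exp(sX) > \exp(st)\big) \;\le\; \exp(-st)\, E[\exp(sX)] \;\le\; \exp\!\big(s^2\sigma^2/2 - st\big). \nn
\eenr

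Next I would optimize the right-hand side over $s>0$. The exponent $s^2\sigma^2/2 - st$ is a convex quadratic in $s$, minimized at $s^{*} = t/\sigma^2 > 0$, which produces the bound $pr(X > t) \le \exp(-t^2/(2\sigma^2))$. To obtain the two-sided conclusion, I would note that if $X$ is subgaussian with variance proxy $\sigma^2$, then so is $-X$ (since $E[\exp(s(-X))] = E[\exp((-s)X)] \le \exp(s^2\sigma^2/2)$ by the defining inequality evaluated at $-s$). Applying the same Chernoff step to $-X$ gives $pr(X < -t) \le \exp(-t^2/(2\sigma^2))$, and combining the two tails via a union bound yields the stated inequality (up to the factor of $2$ that appears in the usual two-sided form).

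The argument is entirely routine; there is no conceptual obstacle. The only thing to be careful about is whether the statement as quoted intends the one-sided or the two-sided bound, since the displayed inequality as written would more conventionally carry an extra factor of $2$ on the right-hand side. Either way, the proof is the Chernoff step followed by the optimal choice $s^{*} = t/\sigma^2$, and no further machinery is needed.
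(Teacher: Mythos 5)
Your proof is correct and is the standard Chernoff argument; the paper gives no proof of its own for this lemma, simply reproducing the statement from Lemma 1.3 of the cited lecture notes of Rigollet, where exactly this argument appears. You are also right to flag that the two-sided bound as displayed should conventionally carry a factor of $2$ on the right-hand side (the one-sided bound $pr(X>t)\le \exp\big(-t^2/(2\si^2)\big)$ is what the optimization over $s$ actually delivers); this constant is immaterial for every use of the lemma in the paper, since all downstream bounds absorb it into the generic constants $c_{u1}, c_{u2}$.
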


The following lemma is essentially Lemma 11 of the Supplementary materials of \cite{loh2012}.
\begin{lem}\label{lem:lwtopo}  For any $s\ge 1,$ we have
	\benr
	\cB_1(c_u\surd{s})\cap\cB_2(1)\subseteq 3{\rm cl}\Big[{\rm conv}\big\{\cB_0(c_u^2s)\cap\cB_2(1)\big\}\Big],
	\eenr
	where the balls are taken in $p$-dimensional space, and ${\rm cl}(\cdot)$ and ${\rm conv}(\cdot)$ denote the topological closure and convex hull, respectively.	
\end{lem}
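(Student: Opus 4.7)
The plan is to establish this containment via the standard "shelling" decomposition that appears throughout the compressed sensing / high-dimensional statistics literature (this is essentially Lemma 11 of Loh--Wainwright's supplement, as hinted in the statement). Fix any $\delta\in\cB_1(c_u\surd{s})\cap\cB_2(1)$, and relabel coordinates so that $|\delta_1|\ge|\delta_2|\ge\cdots\ge|\delta_p|$. Set $k=\lceil c_u^2 s\rceil$ and partition $\{1,\ldots,p\}$ into consecutive index blocks $T_0,T_1,\ldots$ of size (at most) $k$, so that $\delta=\sum_{j\ge 0}\delta_{T_j}$ with each restriction $\delta_{T_j}$ being $k$-sparse.

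The key step is a telescoping $\ell_1\!\to\!\ell_\infty\!\to\!\ell_2$ bound on the tail blocks. Since coordinates in $T_j$ are dominated in magnitude by coordinates in $T_{j-1}$, one has $\|\delta_{T_j}\|_\infty \le \|\delta_{T_{j-1}}\|_1/k$ for $j\ge 1$, hence
\[
\|\delta_{T_j}\|_2\;\le\;\surd{k}\,\|\delta_{T_j}\|_\infty\;\le\;\|\delta_{T_{j-1}}\|_1\big/\surd{k}.
\]
The head block is controlled directly by $\|\delta_{T_0}\|_2\le\|\delta\|_2\le 1$. Summing and using $\sum_{j\ge 0}\|\delta_{T_j}\|_1=\|\delta\|_1\le c_u\surd{s}$ together with $\surd{k}\ge c_u\surd{s}$ gives $\sum_{j\ge 0}\|\delta_{T_j}\|_2\le 1+\|\delta\|_1/\surd{k}\le 2$.

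To finish, write each nonzero block as $\delta_{T_j}=\alpha_j v_j$ with $\alpha_j=\|\delta_{T_j}\|_2$ and $v_j=\delta_{T_j}/\alpha_j\in \cB_0(c_u^2 s)\cap\cB_2(1)$. Then $\delta=\sum_j \alpha_j v_j$ with $\sum_j\alpha_j\le 2$, so $\delta/2$ is a finite convex combination of elements of $\cB_0(c_u^2 s)\cap\cB_2(1)$ (padded by the zero vector, which also belongs to this set, to make the weights sum to one). Thus $\delta\in 2\,\mathrm{conv}\{\cB_0(c_u^2 s)\cap\cB_2(1)\}$, and since this convex hull contains the origin the inflation $t\mapsto t\cdot\mathrm{conv}(\cdot)$ is monotone for $t\ge 1$, yielding $\delta\in 3\,\mathrm{conv}\{\cB_0(c_u^2 s)\cap\cB_2(1)\}\subseteq 3\,\mathrm{cl}[\mathrm{conv}\{\cB_0(c_u^2 s)\cap\cB_2(1)\}]$ as claimed.

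There is no serious obstacle here: the only mild bookkeeping is rounding $c_u^2 s$ to an integer $k$ and tracking the constant $c_u$ through $\surd{k}\ge c_u\surd{s}$, which is what pins down the factor $2$ (and hence the stated factor $3$, with slack). The closure operation in the statement is superfluous in $\R^p$ for finite $p$ since our decomposition is already a finite convex combination, but it is retained for alignment with the general Banach-space formulation used in the cited reference.
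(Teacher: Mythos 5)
Your proof is correct, but it takes a genuinely different route from the paper's. The paper follows the support-function (dual) argument of Loh and Wainwright: writing $A=\cB_1(c_u\surd{s})\cap\cB_2(1)$ and $B=3\,{\rm cl}[{\rm conv}\{\cB_0(c_u^2s)\cap\cB_2(1)\}]$, it invokes the fact that $A\subseteq B$ for closed convex sets if and only if the support functions satisfy $\phi_A\le\phi_B$, and then verifies $\phi_A(z)\le 3\|z_S\|_2=\phi_B(z)$ by splitting $\langle\delta,z\rangle$ over the top-$\lfloor c_u^2s\rfloor$ coordinate set $S$ of $z$ and its complement. You instead give the primal ``shelling'' decomposition, exhibiting each $\delta\in A$ explicitly as a convex combination (after rescaling) of $k$-sparse unit-ball elements. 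The dual route is shorter and sidesteps block bookkeeping; your route is constructive, makes transparent where the constant comes from, and in fact delivers the sharper inflation factor $2$ before relaxing to the stated $3$. Your individual steps check out: the cascade $\|\delta_{T_j}\|_2\le\surd{k}\,\|\delta_{T_j}\|_\infty\le\|\delta_{T_{j-1}}\|_1/\surd{k}$ is valid because the blocks are magnitude-ordered, the telescoped sum gives $\sum_j\|\delta_{T_j}\|_2\le 1+\|\delta\|_1/\surd{k}\le 2$, padding with the zero vector legitimately produces a convex combination, and $2C\subseteq 3C$ holds since the hull is convex and contains the origin. The one point worth tightening is the rounding: you take $k=\lceil c_u^2s\rceil$, so your blocks are $\lceil c_u^2s\rceil$-sparse, which can exceed the sparsity level $\lfloor c_u^2s\rfloor$ implicit in $\cB_0(c_u^2s)$ by one when $c_u^2s$ is not an integer. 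Taking $k=\lfloor c_u^2s\rfloor$ instead still works, since $\lfloor c_u^2s\rfloor\ge c_u^2s/2$ for $c_u^2s\ge1$ gives $\|\delta\|_1/\surd{k}\le\surd{2}$ and hence a total of $1+\surd{2}<3$, so the stated factor $3$ absorbs the loss; this is cosmetic, not a gap.
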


\begin{proof}[Proof of Lemma \ref{lem:lwtopo}] The argument of this proof is nearly identical to that of Lemma 11 in \cite{loh2012}. The desired containment is trivial when $s>p,$ hence assume that $1\le s\le p.$ For any closed and convex sets $A$ and $B$ and support function $\phi_A(z)=\sup_{\delta\in A}\langle\delta,z\rangle,$ $z\in\R^p$, and similar $\Phi_B(\cdotp),$ it is known that (Theorem 2.3.1(c) of \cite{hug2010course}) $\phi_A\le\phi_B$ if and only if $A\subseteq B.$ The remainder of this proof verifies this relation for the sets $A=\cB_1(c_u\sqrt{s})\cap\cB_2(1)$ and $B=3{\rm cl}\Big[{\rm conv}\big\{\cB_0(c_u^2s)\cap\cB_2(1)\big\}\Big].$ For $z\in\R^p,$ let $S\subseteq\{1,2,...,p\}$ be the subset that indexes the top $\lfloor c_u^2s\rfloor$ elements of $z$ in magnitude. Then $\|z_{S^c}\|_{\iny}\le |z_j|,$ for all $j\in S,$ this in turn implies that,
	\benr
	\|z_{S^c}\|\le \frac{1}{\lfloor c_us\rfloor}\|z_{S}\|_1\le \frac{1}{\surd \lfloor c_u^2s\rfloor}\|z_{S}\|_2\nn
	\eenr
	Now observe that,
	\benr
	\phi_A(z)&=&\sup_{\delta\in A}\langle\delta,z\rangle\le \sup_{\|\delta_S\|_2\le 1}\langle\delta_S,z_S\rangle+\sup_{\|\delta_{S^c}\|_1\le c_u\surd{s}}\langle\delta_{S^c},z_{S^c}\rangle\nn\\
	&\le& \|z_S\|_2+c_u\surd{s}\|z_{S^c}\|_{\iny}\le\Big(1+\frac{ c_u\surd s}{\surd\lfloor c_u^2s\rfloor}\Big)\|z_S\|_2\le 3\|z_S\|_2\nn
	\eenr
	The statement of the lemma now follows upon noting that $\phi_B(z)=3\|z_S\|_2.$
\end{proof}

The following theorem is the well known `Argmax' theorem reproduced from Theorem 3.2.2 of \cite{vaart1996weak}
\begin{thm}[Argmax Theorem]\label{thm:argmax} Let $\cM_n,\cM$ be stochastic processes indexed by a metric space $H$ such that $\cM_n\Rightarrow\cM$ in $\ell^{\iny}(K)$ for every compact set $K\subseteq H$\footnote{i.e., $\sup_{h\in K}\big|\cM_n(h)-\cM(h)\big|\to^p 0.$}. Suppose that almost all sample paths $h\to \cM(h)$ are upper semicontinuous and posses a unique maximum at a (random) point $\h h,$ which as a random map in $H$ is tight. If the sequence $\h h_n$ is uniformly tight and satisfies $\cM_n(\h h_n)\ge \sup_h \cM_n(h)-o_p(1),$ then $\h h_n\Rightarrow \h h$ in $H.$
\end{thm}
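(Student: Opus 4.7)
The plan is to combine Prokhorov tightness, the Skorokhod almost-sure representation, and a deterministic argmax continuity lemma to upgrade the joint weak convergence into weak convergence for $\h h_n.$ The result is classical and its full proof can be found in \cite{vaart1996weak}; here I outline how the argument specializes to the setting of the theorem.

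First I would establish joint tightness of the pair $(\cM_n,\h h_n)$ in $\ell^{\iny}(K)\times H$ for each compact $K\subseteq H.$ Marginal tightness of $\cM_n$ in $\ell^{\iny}(K)$ follows from the assumed weak convergence $\cM_n\Rightarrow \cM,$ and uniform tightness of $\h h_n$ in $H$ is given by hypothesis. Prokhorov's theorem then yields that every subsequence of $(\cM_n,\h h_n)$ admits a further subsequence converging jointly in distribution to some limit $(\cM,h^{\star}),$ where the first marginal is necessarily $\cM.$ Next I would invoke the Skorokhod representation theorem on this subsequential joint limit, replacing the random elements by almost surely convergent versions $(\ti \cM_n,\ti h_n)\to (\ti \cM,\ti h^{\star})$ on a common probability space; the first coordinate converges in $\ell^{\iny}(K)$ for every compact $K,$ and the second in the metric of $H.$ The near-maximum condition transfers as $\ti \cM_n(\ti h_n)\ge \sup_h \ti \cM_n(h)-\eta_n$ with $\eta_n\to 0$ in probability, and by passing to a further subsequence we may assume $\eta_n\to 0$ almost surely.

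The crux is then a deterministic argument applied at each sample point $\omega$ in the almost-sure event. Let $h_0(\omega)$ denote the unique maximizer of the upper semicontinuous map $h\mapsto \ti \cM(\omega)(h).$ Uniform convergence of $\ti \cM_n(\omega)$ to $\ti \cM(\omega)$ on a compact containing $h_0(\omega)$ yields $\ti \cM_n(\omega)(h_0(\omega))\to \sup_h\ti \cM(\omega)(h);$ combined with the near-maximum condition this gives $\liminf_n \ti \cM_n(\omega)(\ti h_n(\omega))\ge \sup_h\ti \cM(\omega)(h).$ On the other hand, uniform convergence on a compact containing the limit $\ti h^{\star}(\omega)$ together with upper semicontinuity of $\ti\cM(\omega)$ forces $\limsup_n \ti \cM_n(\omega)(\ti h_n(\omega))\le \ti \cM(\omega)(\ti h^{\star}(\omega)).$ Consequently $\ti h^{\star}(\omega)$ is a maximizer of $\ti \cM(\omega),$ and uniqueness gives $\ti h^{\star}(\omega)=h_0(\omega).$ Since every weakly convergent subsequence of $\h h_n$ thus has limit distribution equal to that of $\h h,$ the full sequence satisfies $\h h_n\Rightarrow \h h$ in $H.$

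The main obstacle will be the tension between the randomness of $\ti h^{\star}(\omega)$ and the deterministic compact sets on which the hypothesis supplies uniform convergence. The compacts containing $\ti h^{\star}(\omega)$ and $h_0(\omega)$ depend on $\omega,$ yet $\cM_n\Rightarrow \cM$ only delivers uniform convergence on deterministic compacts. The standard remedy is to use the uniform tightness of $\h h_n$ together with tightness of $\h h$ to confine the relevant probability mass to a single deterministic compact $K_{\delta}$ at the cost of $\delta$ in probability, run the deterministic argmax argument there, and then let $\delta\downarrow 0.$ The hypothesis of uniform tightness of $\h h_n$ (as opposed to mere tightness for each individual $n$) is essential for this last step.
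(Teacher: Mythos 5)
The paper does not prove this statement at all: it is reproduced verbatim in Appendix C as an auxiliary result, with the proof deferred to Theorem 3.2.2 of \cite{vaart1996weak}. So the only meaningful comparison is with that source. Your route --- joint tightness of $(\cM_n,\h h_n)$, Prokhorov, a Skorokhod/Dudley almost-sure representation, and a pointwise deterministic argmax-continuity argument --- is a legitimate and standard alternative, but it is not the argument in \cite{vaart1996weak}. Their proof avoids any almost-sure representation: for a closed set $F$ and a compact $K$ they observe that on the event $\{\h h_n\in F\cap K\}$ one has $\sup_{F\cap K}\cM_n\ge \sup_K\cM_n-o_p(1)$, apply the continuous mapping theorem to the pair of suprema $(\sup_{F\cap K}\cM_n,\sup_K\cM_n)$, invoke the portmanteau lemma, and use uniqueness plus upper semicontinuity of $\cM$ to conclude $\{\sup_{F\cap K}\cM\ge\sup_K\cM\}\subseteq\{\h h\in F\}\cup\{\h h\notin K\}$; uniform tightness then removes the compact. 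What their approach buys is that it sidesteps the representation theorem entirely, which matters because $\ell^{\iny}(K)$ is non-separable: the Dudley--Skorokhod representation (Theorem 1.10.4 of \cite{vaart1996weak}) requires the limit law to concentrate on a separable set, and upper semicontinuous sample paths alone do not guarantee this (indicators of closed sets are u.s.c.\ and pairwise at sup-distance one). Your step is rescuable --- weak convergence to a tight Borel limit in $\ell^{\iny}(K)$ forces the limit onto a $\sigma$-compact, hence separable, set --- but that justification is missing and should be stated. Note also that the footnote attached to the theorem in this paper actually replaces $\cM_n\Rightarrow\cM$ by $\sup_{h\in K}|\cM_n(h)-\cM(h)|\to^p 0$, i.e.\ uniform convergence in probability on compacts; under that reading the representation machinery for the first coordinate is unnecessary, since one can pass directly to almost surely convergent subsequences. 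Finally, your closing paragraph correctly identifies the one step that carries real content --- trading the random compacts containing $\h h_n$ and $\h h$ for a single deterministic $K_{\delta}$ via uniform tightness --- but only gestures at it; in a complete write-up that reduction is where the $\delta$-bookkeeping in the portmanteau inequality actually happens and it should be carried out explicitly.
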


\section*{Appendix D: Further details}

\subsection*{Discussion on Algorithm 1 and its initializing Condition D}

In this subsection we provide a detailed discussion of the initializing requirement of $\check\tau$ of Step 0 Algorithm 1 given in Condition D, with the objective of thoroughly convincing the reader of its mildness. We being with a potentially counterintuitive numerical observation which forms the basis for the construction of Condition D and the proposed Algorithm 1. Suppose the $p$-dimensional time series model (\ref{model:subgseries}), and first choose virtually any initial value $\check\tau\in (0,1),$ separated from its boundaries. Then compute the initial soft-thresholded mean estimates $\check\mu_1=\h\mu_1(\check\tau),$ $\check\mu_2=\h\mu_2(\check\tau)$ on the basis of the corresponding binary partition yielded by the arbitrary choice $\check\tau$. Clearly, $\check\mu_1,$ and $\check\mu_2$ may be very poor estimates that may be nowhere near the true values $\mu_1^0$ and $\mu_2^0$ respectively. Nevertheless, upon performing a single update (Step 1 of Algorithm 1) of the change point estimate using $\check\mu_1,$ and $\check\mu_2,$ yields a very precise estimate of the unknown change point, irrespective of the choice of the initial change point and irrespective of the location of the unknown change point. We present Figure \ref{fig:insensitivity1} below, to provide a preliminary visual impression of the robustness of this procedure to the initial value which is the motivation of Algorithm 1 and the initializing Condition D,

\vspace{-2mm}
\begin{figure}[H]
	\centering
	\resizebox{0.8\textwidth}{!}{
		\begin{minipage}[b]{0.45\textwidth}
			\includegraphics[width=\textwidth]{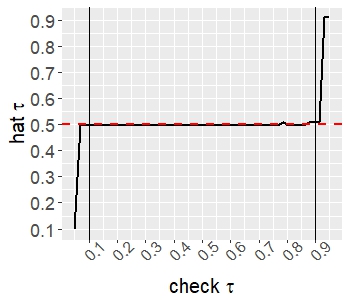}
		\end{minipage}
		\hspace{1in}
		\begin{minipage}[b]{0.45\textwidth}
			\includegraphics[width=\textwidth]{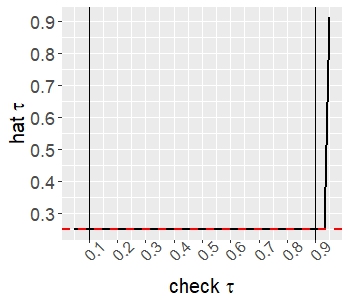}
	\end{minipage}}
	\caption{Illustration of robustness of Algorithm 1 to the initializer $\check\tau.$ x-axis: initializer $\check\tau,$ y-axis: estimated change point $\h\tau$ of Algorithm 1. This illustration is based on a single realization $y,$ with $\tau_0=0.5$ (Left panel: indicated by red line) and $\tau_0=0.25$ (Right panel:indicated by red line). Additional parameters: $T=225,$ $p=100,$ $\mu_1^0=(1_{1\times 5}, 0_{1\times p-5})^T,$ $\g_0=(0_{1\times 5},1_{1\times 5}, 0_{1\times p-10})^T$ and $\vep_t\sim^{i.i.d.}\cN(0,\Sigma),$ with $\Sigma_{ij}=\rho^{|i-j|}.$}
	\label{fig:insensitivity1}
\end{figure}

From Figure \ref{fig:insensitivity1}, note that any value of the initializer $\check\tau\in (0.1,0.9)$ (almost the entire parametric space $(0,1]$ of $\tau^0$), yields estimates $\h\tau$ which approximate $\tau^0$ with nearly identical precision. This behavior is also true irrespective of the location of $\tau^0,$ the true change-point. This goes against the natural intuition, that the `better' the initial value $\check\tau,$ the `better' is the updated estimate $\h\tau,$ in which case, one would have expected a smooth $S$ shaped transition from one end of the parametric space to the other. Instead a flat line behavior for nearly all values of $\check\tau,$ with an abrupt change at the very edges is observed.

This observation is very surprising, since it suggests that any initial $\check\tau$ which carries any `fractional amount of information' on the unknown $\tau^0,$ can be utilized to obtain an estimate $\h\tau$ which lies in a near optimal neighborhood of $\tau^0.$ In other words, the update process pulls in the initial guess $\check\tau$ from a much wider neighborhood (nearly arbitrary) of $\tau^0,$ to a near optimal neighborhood of $\tau^0.$ Our main contribution in Section \ref{sec:nuisance} of the main article is to develop a mathematical theory that supports this phenomenon and also to refine this process to allow for detection of absence of the change point. In the following points we show how the initializing Condition D assumed in Section \ref{sec:nuisance}, requires nothing more than the above described behavior, i.e., any initial value $\check\tau$ separated from the boundaries of the parametric space of $\tau^0,$ and carrying any small or marginal information on $\tau^0$ satisfies this condition.

The main restriction in this condition is that (simplified here for clearer exposition, see Condition D in Section \ref{sec:nuisance} for details),
\begin{eqnarray}\label{eq:initcond1}
|\check\tau-\tau^0|\le c_u\Big(\frac{1}{T}\Big)^{1/k},\quad{\rm{for\,\,any}},\,\, k\in[1,\iny),\,\,{\rm and}\,\,c_u>0.
\end{eqnarray}
Note that the constant $k\in[1,\iny)$ may be arbitrarily large.\footnote{$k\in [1,\iny)$ is arbitrarily large as long as Condition A1 in the manuscript is satisfied. This ensures the `fractional information' in $\check\tau$ is not dominated by the noise terms in the analysis. If $s$ is bounded above, then $k$ is truly arbitrary.}. This means that, if we first pick any $\check\tau\in (0,1),$ separated from its boundaries, then, the farther the user chosen $\check\tau$ is from the true change point $\tau^0,$ the larger the value of $k$ would be, in order to satisfy this initial condition. Furthermore, choosing $c_u=1,$ if we consider the disallowed case of $k=\iny,$ then for any $\tau^0,$ and any initial $\check\tau\in(0,1],$ the initial condition is trivially satisfied since the condition (\ref{eq:initcond1}) requires $|\check\tau-\tau^0|\le 1,$ at $k=\iny.$ This also implies that, if we pick virtually any initial $\check\tau\in (0,1)$ away from its boundaries, then it will satisfy the required initial condition for some large enough $k\in[1,\iny).$ In summary, all that the initial condition requires is the existence of some finite $k<\iny,$ thereby illustrating that this initial condition is infact very mild.

The main novelty of Theorem \ref{thm:alg1cp} is to show that, irrespective of the value of $k$ in the initializing condition, the updated change point estimate $\h\tau$ of Algorithm 1 will satisfy near optimal error bounds, i.e, $|\h\tau-\tau_0|=O(s\log p/T),$ under mild conditions. Importantly, note that error bound is free of $k.$ To see the equivalence of this result with the observation from Figure \ref{fig:insensitivity1}, note that, if we pick any two distinct initializers $\check\tau_1$ and $\check\tau_2,$ where first initial value is closer to the truth $\tau^0,$ i.e., for some $1\le k_1<k_2<\iny,$ then, the corresponding updated change point estimates $\h\tau_1,$ and $\h\tau_2$ will both be in a near optimal neighborhood of $\tau^0.$ This basically implies that the quality of the guess does not influence the updated estimate in its eventual rate of convergence. This is exactly the surprising behavior observed in Figure \ref{fig:insensitivity1}. Furthermore, this also brings out the powerfulness of the proposed Algorithm 1 which is that there is not just one theoretically valid initializer, instead our results show that all values of the initializer in nearly the entire parametric space of $\tau^0,$ are equally theoretically valid initializers.

To conclude this discussion on Condition D, in the following we explicitly describe the above observed property in a large class of problems. Consider the high dimensional model (\ref{model:subgseries}) where $\log p=o(T^{\delta}),$ for some $0<\delta<1,$ the sparsity parameter diverges at a sufficiently slow rate, $s=o(\log T),$ and the change point $\tau^0$ is such that it satisfies, $\tau^0\wedge(1-\tau^0)\ge 1/\log T,$ i.e. it does not converge to zero too fast. Now choose any constant $0<c_1<0.5,$ then our results state that any value of the initializer $\check\tau\in(c_1,1-c_1)$ will be a theoretically valid choice, in the sense that for $T$ large enough (how large a $T$ is required will depend on the choice of $c_1$), the updated $\h\tau$ will satisfy the localization error bound of Theorem \ref{thm:alg1cp}. This can be confirmed by noting that for any $\check\tau\in(c_1,1-c_1)$ will satisfy Condition D of the article for some large enough constant $k>0,$ furthermore any such constant $k$ will in turn satisfy the rate condition of Condition A1 of the article, thereby allowing applicability of our results. For the general case with $s\log p/T\to 0,$ $\tau\ge l_n,$ for some positive sequence $l_n$ where the rate of divergence of $s$ and the rate of convergence of $l_n$ are potentially faster than those assumed earlier. An explicit rule to choose a theoretically valid $\check\tau$ cannot be provided, since all these rates together with the boundaries of the range of theoretically valid initializers shall be inter-related. Consequently, we have stated this inter-relationship between these rates by the means of Condition D and Condition A1 in the manuscript. Although, it is quite apparent, that even in this case the range of theoretically valid initializers will be almost the entire parametric space of $\tau.$ Furthermore, this conclusion is empirically verified in our manuscript with  numerical simulations, where no prior knowledge of $\tau^0$ is assumed (and we consider several cases of $\tau$ ranging from $0.2$ to $0.8$) and the initializer is simply chosen as $\check\tau= 0.5,$ which is the worst possible choice of the initializer assuming no prior information on the unknown change point $\tau^0.$

\subsection*{Numerical results: additional results and omitted details}

\textbf{\emph{Choice of tuning parameters}}: The regularizers $\la_1,$ $\la_2$ used to obtain soft thresholded mean estimates and the regularizer $\gamma$ of Step 1 of Algorithm 1 are all chosen via a BIC type criteria suggested in \cite{kim2012consistent}, which we modify for the model under consideration. Specifically we set $\la_1=\la_2=\la,$ and evaluate $\h\mu_1(\la),$ and $\h\mu_2(\la)$ over an equally spaced grid of $50$ values in the interval $(0,0.5).$ Upon letting $\h S=\{j\,\,\h\mu_{1j}\ne 0\}\cup \{j\,\,\h\mu_{2j}\ne 0\}$ we evaluate the criteria,
\benr
BIC(\la_1,\tau)= \sum_{t=1}^{\lfloor T\tau^0\rfloor}\|y_t-\h\mu_1(\la)\|_2^2+\sum_{t=\lfloor T\tau\rfloor+1}^{T}\|y_t-\h\mu_2(\la)\|_2^2+ |\h S|\log T.\nn
\eenr
For Step 0 of Algorithm 1, we choose that value of $\la$ that minimizes $BIC(\la,\check\tau).$ On the other hand, to obtain the final nuisance mean estimates of Corollary \ref{cor:final}, we choose that value of $\la$ that minimizes $BIC(\la,\h\tau),$ where $\h\tau$ is the change point estimate obtained from Step 1 of Algorithm 1.

The regularizer $\gamma$ of Step 1 of Algorithm 1 is evaluated quite analogously as above. Specifically, we obtain $\h\tau(\gamma),$ for each value of $\gamma$ in a equally spaced grid of $50$ values between $(0,1)$ and compute the criteria,
\benr
BIC(\gamma)= \sum_{t=1}^{\lfloor T\tau^0\rfloor}\|y_t-\h\mu_1\|_2^2+\sum_{t=\lfloor T\tau\rfloor+1}^{T}\|y_t-\h\mu_2\|_2^2+ \big(|\h S|+\|\h\tau(\gamma)\|_0^*\big)\log T.\nn
\eenr
Here $\h\mu_1$ and $\h\mu_2$ represent mean estimates obtained on the binary partition yielded by $\h\tau(\gamma).$  Finally, we choose that value of $\gamma$ that minimizes $BIC(\gamma).$

\textbf{\emph{Computation of $\h\si^2$ and $\h\xi^2$}}: Here we discuss the computation of $\h\si^2$ and $\h\xi^2$ utilized for the computation of confidence intervals for $\tau^0$ using the result of Theorem \ref{thm:limitingdist}. First note that the proposed inference methodology $PLS,$ is implemented in conjunction with the $AL1$ method utilized for preliminary nuisance estimates, accordingly let $\h\mu_1,$ $\h\mu_2$ be the estimates described in Corollary \ref{cor:final}. Additionally let $\h\theta_1$ and $\h\theta_2$ are as defined in Section \ref{sec:intro}. Now recall that by definition,
$\xi=\|\mu_1^0-\mu_2^0\|_2^2=\theta_1^0-\theta_2^0,$ and $\si^2=\lim_T \eta^{0T}\Si_{\vep}\eta^0\big/\xi^2.$ Furthermore note that ${\rm var(\psi_t)}=\eta^{0T}\Si_{\vep}\eta^0,$ where $\psi_t$ are the noise variables of model (\ref{mod:projectedseries}). Accordingly, we can obtain plugin estimates  $\h\xi^2=\h\theta_1-\h\theta_2,$ and
\benr
\h\si^2\big(\tilde\tau,\h\mu_1,\h\mu_2\big)= \frac{1}{\h\xi^2T}\Bigg\{\sum_{t=1}^{\lfloor T\tilde\tau\rfloor}(\h z_t-\h\theta_1)^2+\sum_{t=\lfloor T\tilde \tau\rfloor+1}^{T}(\h z_t-\h\theta_2)^2\Bigg\}.\nn
\eenr
Although these estimates are expected to be consistent, however shrinkage biases present in the mean estimates $\h\mu_1,$ and $\h\mu_2$ seep into the estimation of the variance and jump size leading to significant deviations from significance levels in the simulations. To alleviate these shrinkage biases, we utilize the well accepted and well understood methodology of using refitted parameter estimates, see, e.g. \cite{belloni2017pivotal}. Specifically, instead of using $\h\mu_1$ and $\h\mu_2$ for the variance and jump size calculations, we use their refitted versions, i.e., $\tilde\mu_1=\big[\bar y_{(0:\tilde\tau]}\big]_{\h S1}$ and $\tilde\mu_2=\big[\bar y_{(\tilde\tau:1]}\big]_{\h S2},$ where $\tilde\tau$ is the $PLS$ estimate of $\tau^0,$ and $\h S1=\{j\,\,\h\mu_{1j}\ne 0\},$ $\h S2=\{j\,\,\h\mu_{2j}\ne 0\}.$

\begin{table}
		 \resizebox{1\textwidth}{!}{
		\begin{tabular}{cccccccc}
			\hline
			\multicolumn{2}{c}{$\tau^0=0.4,$ $s=5$} & \multicolumn{2}{c}{$AL1$}                                     & \multicolumn{2}{c}{$PLS$}                                     & \multicolumn{2}{c}{$WS$}                                      \\ \hline
			$T$                & $p$                & \textbf{bias ($\times 10^2$)} & \textbf{RMSE ($\times 10^2$)} & \textbf{bias ($\times 10^2$)} & \textbf{RMSE ($\times 10^2$)} & \textbf{bias ($\times 10^2$)} & \textbf{RMSE ($\times 10^2$)} \\ \hline
			100                & 50                 & 0.160                         & 1.049                         & 0.124                         & 0.020                         & 1.020                         & 0.004                         \\
			100                & 500                & 0.230                         & 1.015                         & 0.424                         & 0.100                         & 0.990                         & 0.003                         \\
			100                & 750                & 0.180                         & 1.122                         & 0.596                         & 0.130                         & 1.118                         & 0.004                         \\ \hline
			225                & 50                 & 0.178                         & 0.671                         & 0.422                         & 0.040                         & 0.655                         & 0.006                         \\
			225                & 500                & 0.218                         & 0.655                         & 1.826                         & 0.156                         & 0.624                         & 0.008                         \\
			225                & 750                & 0.196                         & 0.674                         & 2.655                         & 0.089                         & 0.613                         & 0.009                         \\ \hline
			350                & 50                 & 0.060                         & 0.247                         & 0.821                         & 0.037                         & 0.230                         & 0.008                         \\
			350                & 500                & 0.023                         & 0.214                         & 4.130                         & 0.017                         & 0.218                         & 0.008                         \\
			350                & 750                & 0.046                         & 0.343                         & 6.182                         & 0.017                         & 0.323                         & 0.010                         \\ \hline
	\end{tabular}}
	\caption{\footnotesize{Results of Simulation A.I: estimation performance of $AL1,$ $PLS$ and $WS$ methods. Here, bias ($|E(\h\tau-\tau^0)|$), and root mean squared error (RMSE, $\big\{E(\h\tau-\tau^0)^2\big\}^{1/2}$)}}
	\label{tab:estimation.app1}
\end{table}

\begin{table}
		 \resizebox{1\textwidth}{!}{
		\begin{tabular}{cccccccc}
			\hline
			\multicolumn{2}{c}{$\tau^0=0.6,$ $s=5$} & \multicolumn{2}{c}{$AL1$}                   & \multicolumn{2}{c}{$PLS$}                   & \multicolumn{2}{c}{$WS$}                    \\ \hline
			$T$                & $p$                & bias ($\times 10^2$) & RMSE ($\times 10^2$) & bias ($\times 10^2$) & RMSE ($\times 10^2$) & bias ($\times 10^2$) & RMSE ($\times 10^2$) \\ \hline
			100                & 50                 & 0.270                & 1.054                & 0.127                & 0.090                & 1.034                & 0.004                \\
			100                & 500                & 0.010                & 0.671                & 0.432                & 0.060                & 0.600                & 0.005                \\
			100                & 750                & 0.140                & 1.030                & 0.593                & 0.050                & 1.005                & 0.003                \\ \hline
			225                & 50                 & 0.102                & 0.395                & 0.389                & 0.031                & 0.298                & 0.006                \\
			225                & 500                & 0.004                & 0.317                & 1.868                & 0.013                & 0.324                & 0.008                \\
			225                & 750                & 0.044                & 0.586                & 2.607                & 0.004                & 0.468                & 0.006                \\ \hline
			350                & 50                 & 0.069                & 0.283                & 0.806                & 0.011                & 0.194                & 0.010                \\
			350                & 500                & 0.034                & 0.218                & 4.297                & 0.014                & 0.212                & 0.011                \\
			350                & 750                & 0.054                & 0.304                & 5.794                & 0.011                & 0.277                & 0.011                \\ \hline
	\end{tabular}}
	\caption{\footnotesize{Results of Simulation A.I: estimation performance of $AL1,$ $PLS$ and $WS$ methods. Here, bias ($|E(\h\tau-\tau^0)|$), and root mean squared error (RMSE, $\big\{E(\h\tau-\tau^0)^2\big\}^{1/2}$)}}
	\label{tab:estimation.app2}
\end{table}

\begin{table}
		 \resizebox{1\textwidth}{!}{
		\begin{tabular}{cccccccc}
			\hline
			\multicolumn{2}{c}{$\tau^0=0.8,$ $s=5$} & \multicolumn{2}{c}{$AL1$}                   & \multicolumn{2}{c}{$PLS$}                   & \multicolumn{2}{c}{$WS$}                    \\ \hline
			$T$                & $p$                & bias ($\times 10^2$) & RMSE ($\times 10^2$) & bias ($\times 10^2$) & RMSE ($\times 10^2$) & bias ($\times 10^2$) & RMSE ($\times 10^2$) \\ \hline
			100                & 50                 & 1.750                & 3.637                & 0.125                & 0.140                & 0.849                & 0.006                \\
			100                & 500                & 0.910                & 3.500                & 0.419                & 0.580                & 3.206                & 0.003                \\
			100                & 750                & 0.140                & 1.985                & 0.587                & 0.260                & 1.140                & 0.004                \\ \hline
			225                & 50                 & 0.827                & 1.616                & 0.388                & 0.093                & 0.419                & 0.006                \\
			225                & 500                & 0.329                & 0.871                & 1.845                & 0.058                & 0.481                & 0.006                \\
			225                & 750                & 0.356                & 0.982                & 2.625                & 0.089                & 0.586                & 0.007                \\ \hline
			350                & 50                 & 0.480                & 1.022                & 0.845                & 0.011                & 0.277                & 0.012                \\
			350                & 500                & 0.297                & 0.652                & 3.995                & 0.054                & 0.312                & 0.010                \\
			350                & 750                & 0.343                & 0.668                & 5.999                & 0.009                & 0.174                & 0.012                \\ \hline
	\end{tabular}}
	\caption{\footnotesize{Results of Simulation A.I: estimation performance of $AL1,$ $PLS$ and $WS$ methods. Here, bias ($|E(\h\tau-\tau^0)|$), and root mean squared error (RMSE, $\big\{E(\h\tau-\tau^0)^2\big\}^{1/2}$)}}
	\label{tab:estimation.app3}
\end{table}

\bibliographystyle{plainnat}
\bibliography{meanchange}

\end{document}